\documentclass[11pt, a4paper, reqno, twoside, makeidx]{amsart}
\usepackage[margin=2.8cm, marginpar=2cm]{geometry}
\usepackage[dvipsnames]{xcolor}
\usepackage[font={small,it}]{caption}
\usepackage{tikz-cd}
\usepackage{amsmath, amsthm, amssymb, amsfonts, amscd}
\usepackage[english]{babel}
\usepackage{url}
\usepackage{hyperref}
\usepackage{mathtools}
\usepackage{pinlabel}
\usepackage{changepage}

\usepackage{amsaddr}
\theoremstyle{remark}
\newtheorem{rmk}{Remark}[section]
\theoremstyle{plain}
\newtheorem{thm}{Theorem}[section]
\newtheorem{lem}[thm]{Lemma}
\newtheorem{prop}[thm]{Proposition}
\newtheorem{defi}[thm]{Definition}
\newtheorem{cor}[thm]{Corollary}
\newtheorem{conj}[thm]{Conjecture}

\theoremstyle{definition}
\newtheorem{exa}[thm]{Example}

\newtheorem*{cccc}{Conjecture}

\newcommand{\N}{\mathbb{N}}
\newcommand{\Z}{\mathbb{Z}}

\newcommand{\R}{\mathbb{R}}

\newcommand{\F}{\mathbb{F}}
\newcommand{\HH}{\mathrm{H}}
\newcommand{\Mat}{\mathrm{M}}
\newcommand{\uH}{\ddot{\mathrm{H}}}
\newcommand{\hasse}{\mathcal{P}(X)}
\newcommand{\perc}[2]{#1 \cdot 10^{-#2}}

\subjclass[2020]{05E45; 57Q70, 05C60, 13P20, 57Q15.}
\title{Filtered simplicial homology, graph dissimilarity and \"{u}berhomology}
\author{Daniele Celoria}
\date{}

\begin{document}

\maketitle

\begin{abstract}
We introduce a filtration on the simplicial homology of a finite simplicial complex $X$ using bi-colourings of its vertices. This yields two dual homology theories closely related to discrete Morse matchings on $X$. We give an explicit expression for the associated graded object of these homologies when $X$ is the matching complex of the Tait graph of a plane graph $G$, in terms of subgraphs determined by certain matchings on the dual of $G$. 

We then use one of these homologies, in the case where $X$ is a graph, to define a conjecturally optimal dissimilarity pseudometric for graphs;  we prove various results for this dissimilarity and provide several computations.

We further show that, by organising the horizontal homologies of a simplicial complex in the poset of its colourings, we obtain a triply graded homology theory which we call \"uberhomology. This latter homology is not a homotopy invariant, but nonetheless encodes both combinatorial and topological information on $X$. For example, we prove that if $X$ is a subdivision, the \"uberhomology vanishes in its lowest degree, while for an homology manifold it coincides with the fundamental class in its top degree.
We compute the \"uberhomology on several classes of examples and infinite families, and prove some of its properties; namely that, in its extremal degrees, it is well-behaved under coning and taking suspension.

We then focus on the case where $X$ is a simple graph, and prove a detection result.  Finally, we define some singly-graded homologies for graphs obtained by specialising the \"uberhomology in certain bi-degrees,  provide some computations and use computer aided calculations to make some conjectures.
\end{abstract}

\section{Introduction}
Since its definition in the beginning of the previous century by Poincar\'e (and after some key insights by Noether in 1925 \cite{hilton1988brief}) simplicial homology has been an invaluable tool in mathematics, well beyond the mere study and classification of combinatorial manifolds.

In the last few decades, simplicial homology has seen a new renaissance in at least two new prominent fields: the definition and efficient computation of persistent homology in topological data analysis \cite{edelsbrunner2008persistent}, and the study of properties of simplicial complexes associated to graphs, using techniques from combinatorial algebraic topology \cite{kozlov2007combinatorial}. \\

The aim of this paper is to show that, by considering a simple combinatorial filtration on the simplicial chain complex of a simplicial complex $X$, it is possible to define various interesting homology theories. 

More concretely, a black and white colouring of the vertices of a finite simplicial complex $X$ induces a filtration on the simplicial chain complex of $X$, as well as a splitting of the differential $$\partial = \partial_h + \partial_d.$$ 

This allows for the definition of a filtered chain complex, together with two auxiliary bi-graded homology theories $\HH^h(H)$ and $\HH^d(X)$, called horizontal and diagonal homology.

Opposite colourings give rise to an involutive relation between $\HH^h$ and $\HH^d$. We will thus mainly restrict our considerations only to the horizontal version.\\

After proving that they are well-defined, we show that these homology theories are related to certain discrete Morse matchings (as defined in \cite{chari2000discrete} following \cite{forman1998morse}) on the face poset of $X$. More precisely, we completely characterise the colourings that give rise to Morse matchings on $X$ in Theorem~\ref{thm:dmmandsubgraphs}, and prove that these can be used as building blocks for the case of a general colouring in Theorem~\ref{thm:decomposition}. This allows to canonically partition the simplicial differential in a collection of discrete Morse matchings.
As a consequence, in some cases the generators of the horizontal homology are in bijection with the critical cells of the Morse matchings induced by the colouring; further, for these specific colourings we can give an explicit description of the generators of the horizontal homology (Proposition~\ref{prop:dalmatianhor}).\\
A further implication of Theorem~\ref{thm:decomposition} is that the matrices determining the horizontal complex can be easily computed by just looking at the contribution of rather simple discrete Morse matchings, that can be easily explicitly determined (see Section~\ref{sec:relationtodmm}, and Figure~\ref{fig:simplexcomplex} therein).\\

On a different note, we show that the matching complex of a Tait graph $\Gamma(G)$ of a plane graph $G$ can be given a natural bi-colouring. The homology of the filtered chain complex with respect to this colouring is a page of a spectral sequence which starts from the homology of the matching complex of the barycentric subdivision of $G$, and abuts to $\HH (\Mat (\Gamma(G)))$. We show in Theorem~\ref{thm:horizhomoftait} that the horizontal homology $\HH^h(\Mat(\Gamma(G)))$, which is identified with the associated graded object to the filtration, admits a decomposition in terms of subgraphs in the barycentric subdivision of $G^*$, the plane dual of $G$. This decomposition allows to straightforwardly compute the homology in terms of the simplicial homology of matching complexes of certain complementary subgraphs in  the barycentric subdivision of $G$. Thus, Theorem~\ref{thm:horizhomoftait} reveals a rich structure of the matching complex of Tait graphs, tightly connecting properties of the initial graph $G$ to $\Mat(\Gamma(G))$.\\

We then focus our attention to the general computation of the horizontal homology of simple and connected graphs. We partition these homologies with respect to the number of vertices coloured in black; these collections, denoted by $\Theta(G,j)$ for $j = 0, \ldots, m$, can be used to define a dissimilarity measure $\Delta$ for graphs. 
A dissimilarity measure is just a function assigning a value from $0$ to $1$ to any pair of graphs with the same number of vertices, that somehow quantifies their being ``structurally different'' (see \emph{e.g.}~\cite{schieber2017quantification}, \cite{chartrand1998graph} and \cite{li2018efficient}  for similar measures, and \cite{wills2020metrics} for an interesting up-to-date survey). Common (dis)similarity measures include feature-based distances, such as connectivity, degrees and cycle sequences, or spectral distances, extracted from the graph's adjacency or Laplacian matrices.
We note here that this dissimilarity measure admits a straightforward generalisation to the case of simplicial complexes, but we are not pursuing this direction presently.

One can think of the dissimilarity $\Delta$ as a systematic method for collecting and organising several feature-based measures in one object.
We show that $\Delta$ can pick up some subtle information on the graph.

Similarly, there are several metrics that can be considered on the space of graphs. Notable examples include \cite{balavz1986metric}, \cite{bhutani2003metric}, and \cite{bento2018family}; it is interesting to note that most of these examples are somewhat related to applications.
It is not hard to see that $\Delta$ is a pseudometric on the set of graphs with a fixed number of vertices. Indeed, we conjecture the following:
\begin{cccc}
The dissimilarity $\Delta$ is a metric on the set of graphs, \emph{i.e.}~$\Delta(G_1,G_2) > 0$ whenever $G_1 \neq G_2$. 
\end{cccc}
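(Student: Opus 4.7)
The plan is to establish that the collective data $\{\Theta(G,j)\}_{j=0}^{m}$ is a complete isomorphism invariant of $G$: two graphs on $m$ vertices producing the same multisets of horizontal homologies in every colouring-size $j$ must be isomorphic. Since $\Delta$ is built from pairwise discrepancies of these multisets, this would immediately yield $\Delta(G_1,G_2)>0$ whenever $G_1\not\cong G_2$. The strategy is reconstructive: extract progressively finer graph invariants as $j$ grows.

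First I would handle the extremal indices. When $j=0$ or $j=m$ the colouring is monochromatic, no edge is bichromatic, and the splitting $\partial=\partial_h+\partial_d$ degenerates so that horizontal homology reduces to ordinary simplicial homology. This recovers from $\Theta(G,0)$ and $\Theta(G,m)$ the Betti numbers of $G$, hence the number of connected components and the circuit rank. Proceeding to $j=1$, the multiset of horizontal homologies over all single-black-vertex colourings should depend only on the local star structure at each vertex, so one expects to recover the degree sequence of $G$. For $j=2$, the presence or absence of the edge joining the two distinguished vertices directly alters the splitting of $\partial$, distinguishing adjacent from non-adjacent pairs; summed over all such pairs this recovers at least the edge count. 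Iterating, the data $\Theta(G,j)$ should, in the spirit of the graph reconstruction conjecture, encode all induced subgraphs of size $j$ up to isomorphism.

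The main obstacle is the rigidity step: $\Theta(G,j)$ is a multiset of homology groups, so the passage from chain-level information to homology may identify non-isomorphic induced subgraphs. Precisely the same type of coincidence obstructs the classical graph reconstruction conjecture, and the hardest cases are likely to arise among families with many coinciding standard invariants---cospectral pairs, strongly regular graphs with common parameters, and the like. Overcoming this will almost certainly require either packaging $\Theta(G,j)$ into a richer polynomial invariant (refining, say, the matching or Tutte polynomials) or exploiting the spectral sequence structure mentioned in the Tait graph discussion to leverage more than just homology-level data. This combinatorial rigidity is, I expect, the reason the statement is offered as a conjecture rather than proved, and it is where the real work lies.
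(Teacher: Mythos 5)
The statement you were asked to prove is not proved in the paper at all: it is stated as a conjecture (and restated later as Conjecture~5.1, shown there to be equivalent to Conjecture~5.8, the completeness of the $\Theta$ invariants). Your proposal correctly identifies this equivalence and correctly does not claim to close the gap, so there is no disagreement with the paper on the overall shape of the problem. But as a proof it is incomplete, and the missing step is exactly the one you flag: $\Theta(G,j)$ is an \emph{ordered multiset} of tuples $(j,i,k,r)$ in which the colouring $\varepsilon$ has been forgotten. The paper's own reconstruction argument (recovering $G$ from adjacency data $\mathrm{rk}(\HH^h_0(G,\varepsilon_{qt},0))=1$) works only for the labelled invariant $\widehat{\Theta}(G,2)$, which records which pair of vertices each rank belongs to; that invariant is trivially complete but not label-invariant, hence useless for distinguishing graphs. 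Passing to the unlabelled $\Theta$ destroys precisely the bookkeeping your ``iterating'' step needs, and no argument in your sketch (or in the paper) recovers it. So the proposal is a plausible programme, not a proof, and the rigidity step you defer is the entire content of the conjecture.

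One factual correction: at $j=0$ the horizontal differential is trivial and $\HH^h(G,(0,\ldots,0))$ is isomorphic to the (shifted) chain complex $C_*(G)$, not to simplicial homology; it is the all-black colouring $j=m$ that yields $\HH_*(G)$. From $\Theta(G,0)$ one therefore reads off $|V(G)|$ and $|E(G)|$ rather than Betti numbers (for connected graphs these determine each other, so nothing downstream breaks, but the statement as written is wrong). Your claims for $j=1$ (degree sequence) and $j=2$ (adjacency of the two black vertices via $\mathrm{rk}(\HH^h_0(G,\varepsilon_{ij},0))$) match what the paper establishes. The analogy you draw with the graph reconstruction conjecture is apt as a heuristic for why the problem is hard, but note that $\Theta(G,j)$ does not literally encode the isomorphism types of $j$-vertex induced subgraphs; it encodes ranks of homology groups of coloured complexes, which is a different (and a priori incomparable) loss of information.
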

It turns out that this conjecture is equivalent to saying that the $\Theta$ invariants introduced above are in fact complete invariants of graphs.

We then discuss the computational cost involved in determining $\Delta$, as well as some effective techniques to reduce it. 
We provide the result of the  computation of $\Delta$ for all connected simple graphs with less than $10$ vertices and all trees with up to $16$ vertices. A program for computing $\Delta$ and related invariants is available at~\cite{miogithub}.\\

We can then introduce the main object of the paper, the aptly named \"uberhomology $\uH(X)$; this is a triply graded homology theory associated to any finite simplicial complex $X$. We obtain $\uH(X)$ by considering at once the horizontal homologies for all possible colourings of $X$, and organising them in a poset reminiscent of Khovanov's \cite{khovanov2000categorification} cube of resolution for link diagrams.

It is not hard to realise that the \"uberhomology is not a homotopy invariant.
We can however show that the \"uberhomology in its extremal degrees measures both combinatorial and topological features of simplicial complexes; as an example we prove in Theorem~\ref{thm:0dim} that the $0$-degree component of this homology keeps track of the intersections of the radius one closed balls centred in the vertices of the simplicial complex. 

In particular this implies (Corollary~\ref{cor:subdivisiondegree0}) that $\uH^0(X)$ vanishes whenever $X$ is the barycentric subdivision of another simplicial complex, or has diameter greater than $3$ (Corollary~\ref{cor:diameter}).

On the other hand, the top dimensional \"uberhomology group is a homotopical invariant of $X$, whenever $X$ is a triangulation of a closed manifold (Theorem~\ref{thm:topdim}). In this case it coincides with the fundamental class of $X$. 
We also show in Proposition~\ref{prop:cone} that the \"uberhomology in these extremal degrees behaves well under the coning and suspension operations.
A general combinatorial or topological characterisation of $\uH(X)$ in the intermediate degrees seems to be currently out of reach. We do however fully compute $\uH(X)$ in several examples, such as the $n$-simplices and their boundary. 

We conclude by showing that the \"uberhomology can be used in several ways to study graphs; as a straightforward example, we can compute the \"uberhomology of a simple graph, seen as a simplicial complex. Considering the degrees in which $\uH$ is non-trivial yields four singly-graded homology theories --one of which appears to be trivial-- with unusual properties. These singly-graded graph homologies obtained by specialising the \"uberhomology might be of independent interest.

Some Sage \cite{sagemath} programs to compute these homologies are available in~\cite{miogithub}.
.

Furthermore, we briefly examine the \"uberhomology of simplicial complexes associated to graphs, \emph{e.g.}~by considering the horizontal homologies of its matching complex, coloured by the subgraphs poset.

\subsection*{Acknowledgements:} The author wishes to thank  N.~Yerolemou, A.~Barbensi, M.~Golla and N.~Scoville for interesting conversations and feedback. The author would also like to express sincere thanks to anonymous referees, which greatly improved the earlier version of this paper. This project has received funding from the European Research Council (ERC) under the European Union’s Horizon 2020 research and innovation programme (grant agreement No 674978).



\section{Horizontal and diagonal homologies}

Let $X$ be a finite and connected simplicial complex, with ordered vertex set $V(X) = \{v_1, \ldots, v_m\}$. Fix $\varepsilon \in \Z_2^m$, and colour the vertex $v_i$ white or black according to whether $\varepsilon(i)$, the $i$-th component of $\varepsilon$, is $0$ or $1$ respectively. The pair $(X,\varepsilon)$ will be called an \emph{$\varepsilon$-coloured simplicial complex}. 

Given a $n$-dimensional simplex $\sigma \subseteq X$, denote by $$ w_\varepsilon(\sigma) = n+1-\sum_{v_i \in \sigma} \varepsilon(i)$$  the \emph{weight} of $\sigma$ with respect to the colouring $\varepsilon$; here $\varepsilon(i) \in \Z_2$ is the $i$-\emph{th} component of the colouring. In other words, $w_\varepsilon (\sigma)$ is just the number of white vertices contained in $\sigma$. Likewise $w_\varepsilon (X)$ is defined as $\max\{w_\varepsilon(\sigma)|\sigma \subseteq X\}$.  We will write $dim(\sigma)$ to denote the dimension of a simplex, and $dim(X)$ for the maximal dimension of the simplices in $X$.

If $\{ v_1, \ldots,v_n\} \subseteq V(X)$, then $\langle v_1, \ldots,v_n\rangle$ indicates the simplex in $X$ spanned by these vertices.
The link $lk(v)$ of a vertex $v \in V(X)$ is the subcomplex $\{\sigma \in X\,|\, \langle v\rangle\cap \sigma = \emptyset,\,\sigma \cup \langle v\rangle \in X \}$, and the star $St(v)$ is the set of simplices having $v$ as a vertex; its closure\footnote{This is just the union of the star and link of the vertex.} will be denoted by $\overline{St(v)}$.
~\\

In what follows the simplicial homology of $X$, with coefficients in the field $\F$ with two elements, will be denoted by $\HH_*(X) = \HH (C_*(X), \partial)$, where $$C_i(X) = \F\langle\,\sigma \;|\; dim(\sigma) = i\rangle,$$ and $\partial$ is the simplicial boundary map.  The notation $\widetilde{\HH}(X)$ will instead be reserved for the reduced homology of $X$, obtained from $(C_*(X), \partial)$ by adding one extra generator $\star$ to $C(X)$ in homological degree $-1$, and setting  $\partial\langle v\rangle = \star$ for each vertex $v \in V(X)$.\\ 

If we fix the colouring $\varepsilon$, the weight $w_\varepsilon$ induces a filtration on the complex $C_*(X)$:
$$C_i(X,\varepsilon, \le k) = \F\langle\; \sigma\;|\; \sigma \in C_i(X),\;  w_\varepsilon(\sigma) \le k\rangle.$$ 
Clearly we have 
\begin{equation}
C_*(X,\varepsilon, \le k) = \begin{cases}
\emptyset &  \text{ if } k < 0\\
C_*(X) & \text{ if } k \ge dim(X) +1
\end{cases}
\end{equation}
since, by definition, $0 \le w_\varepsilon(\sigma) \le dim(X) +1$ for all $\sigma \subseteq X$.

We will adopt the notation $C(X, \varepsilon)$ to denote the module $C(X)$ endowed with the bi-grading induced by the $\varepsilon$ weight function and the usual dimension of the simplices, and $$C_i(X, \varepsilon,k) = \F \langle \; \sigma\;|\; \sigma \in C_i(X),\;  w_\varepsilon(\sigma) = k\rangle$$ to denote the  complex in bi-degree $(i,k)$, so that $$C(X, \varepsilon) = \bigoplus_{\substack {i = 0, \ldots,\, dim(X) \\ k = 0, \ldots,\, dim(X)+1}} C_i(X, \varepsilon,k).$$ 
~\\
The simplicial differential can be split into two components
$$\partial = \partial_h + \partial_d,$$
where $\partial_h$ preserves the weight $w_\varepsilon$, while $\partial_d$ decreases it by $1$. 

More explicitly, the \emph{horizontal differential} $\partial_h (\sigma)$ is the sum of all the faces in the expansion of $\partial(\sigma)$ that contain exactly $w_\varepsilon(\sigma)$ white dots, while the \emph{diagonal differential} $\partial_d (\sigma)$ is the sum of the remaining faces, which necessarily have one fewer white dot (see Figures~\ref{fig:colouredcpx2simplex} and~\ref{fig:hor3simplex}).

\begin{lem}
Given a finite $\varepsilon$-coloured simplicial complex $(X, \varepsilon)$, both $\partial_h$ and $\partial_d$  square to $0$ in $C(X, \varepsilon)$.
\end{lem}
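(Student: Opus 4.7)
The plan is to exploit the fact that the weight $w_\varepsilon$ equips $C(X,\varepsilon)$ with an honest $\Z$-grading, so that the splitting $\partial = \partial_h + \partial_d$ decomposes $\partial$ according to how much it shifts this grading. Once this is in place, the result will follow immediately from $\partial^2=0$.

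More concretely, the first step is to check that $\partial_h$ has weight-degree $0$ and $\partial_d$ has weight-degree $-1$: this is just the definition, since removing a white vertex from a simplex $\sigma$ produces a face with one fewer white vertex (contribution to $\partial_d$), while removing a black vertex leaves the white count unchanged (contribution to $\partial_h$). Then applying a single boundary to any simplex $\sigma$ of weight $k$ splits uniquely into a sum of faces of weight $k$ and faces of weight $k-1$, so $\partial(\sigma) = \partial_h(\sigma) + \partial_d(\sigma)$ with no overlap, confirming that the splitting is well-defined on all of $C(X,\varepsilon)$.

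The second step is to expand
\[ 0 = \partial^2 = (\partial_h + \partial_d)^2 = \partial_h^2 + (\partial_h \partial_d + \partial_d \partial_h) + \partial_d^2. \]
Composing operators of weight-degrees $0$ and $-1$ in the four possible ways, $\partial_h^2$ has weight-degree $0$, the cross terms $\partial_h\partial_d$ and $\partial_d\partial_h$ have weight-degree $-1$, and $\partial_d^2$ has weight-degree $-2$. These three homogeneous components land in pairwise distinct summands of the $w_\varepsilon$-graded module $C(X,\varepsilon)$, so the vanishing of their sum forces each to vanish individually. In particular $\partial_h^2 = 0$ and $\partial_d^2 = 0$.

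There is no real obstacle here; the only thing to be mildly careful about is confirming that the splitting really is a direct-sum decomposition on each chain group (so that the graded-piece argument is legitimate), which is why I would spell out step one before invoking the graded argument. As a byproduct one also reads off $\partial_h \partial_d = \partial_d \partial_h$, i.e.\ $\partial_h$ and $\partial_d$ anticommute over $\F = \F_2$ (they actually commute), which will be useful later when identifying $(C(X,\varepsilon), \partial_h, \partial_d)$ with a bicomplex and the associated filtered/spectral-sequence structure referred to in the introduction.
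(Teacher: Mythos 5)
Your proof is correct, but it takes a different route from the paper's. The paper argues directly and combinatorially: for a simplex $\sigma$, the chain $\partial_h^2(\sigma)$ is the sum over codimension-two faces obtained by deleting two black vertices, each such face arises in exactly two ways (deleting the two black vertices in either order), and so the sum vanishes over $\F = \F_2$; the claim for $\partial_d$ follows by swapping the colours. In effect the paper re-runs the standard double-counting proof of $\partial^2=0$ restricted to black (resp.\ white) vertices. You instead take $\partial^2=0$ as given and decompose it into homogeneous components with respect to the $w_\varepsilon$-grading: $\partial_h^2$ lands in weight degree $0$, $\partial_d^2$ in degree $-2$, and the cross terms in degree $-1$, so each must vanish separately. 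Both arguments are sound; yours is slightly slicker and has the genuine advantage of also yielding $\partial_h\partial_d = \partial_d\partial_h$ for free, which is exactly what is needed to view $(C(X,\varepsilon),\partial_h,\partial_d)$ as a bicomplex and justifies the filtered/spectral-sequence language used later in the paper, whereas the paper's argument is more self-contained and makes the cancelling pairs of faces explicit. Your preliminary step (that each face of a weight-$k$ simplex has weight $k$ or $k-1$, so the splitting is an honest direct-sum decomposition) is the right thing to check and matches the paper's definition of $\partial_h$ and $\partial_d$. One cosmetic remark: over $\F_2$ "anticommute" and "commute" coincide, so the parenthetical at the end is redundant rather than wrong.
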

\begin{proof}
For a coloured simplex $\sigma \subseteq (X, \varepsilon)$, $\partial_h^2(\sigma)$ is the (possibly empty) sum over all codimension two faces obtained by removing two black vertices from $\sigma$. Note that if $\sigma$ contains fewer than two black vertices, the square of $\partial_h$ is automatically $0$. 
Each of these faces can be obtained in exactly two ways, by removing one  black vertex at a time in either order.  Hence the total sum vanishes modulo $2$. The same holds for $\partial_d$ by exchanging the role of the colours.
\end{proof}

An explicit computation of these homologies for the $(1,0,0)$ $2$-simplex is carried out in Figure~\ref{fig:colouredcpx2simplex}. The whole filtered chain complex $C(\Delta^2,(1,0,0))$ is in the top-right part of the figure, while the horizontal and diagonal complexes are shown in the lower part. We note that the horizontal and diagonal differentials are obtained by only considering the horizontal\slash diagonal arrows representing $\partial$, thus justifying their name.
The homologies of the latter two are  $\HH^h(\Delta^2, (1,0,0)) \cong \F\langle v_1\rangle$ in bi-degree $(0,0)$ and $\HH^d(\Delta^2, (1,0,0)) \cong \F\langle v_2\rangle$ in bi-degree $(0,1)$.

\begin{figure}[ht]
\includegraphics[width=10cm]{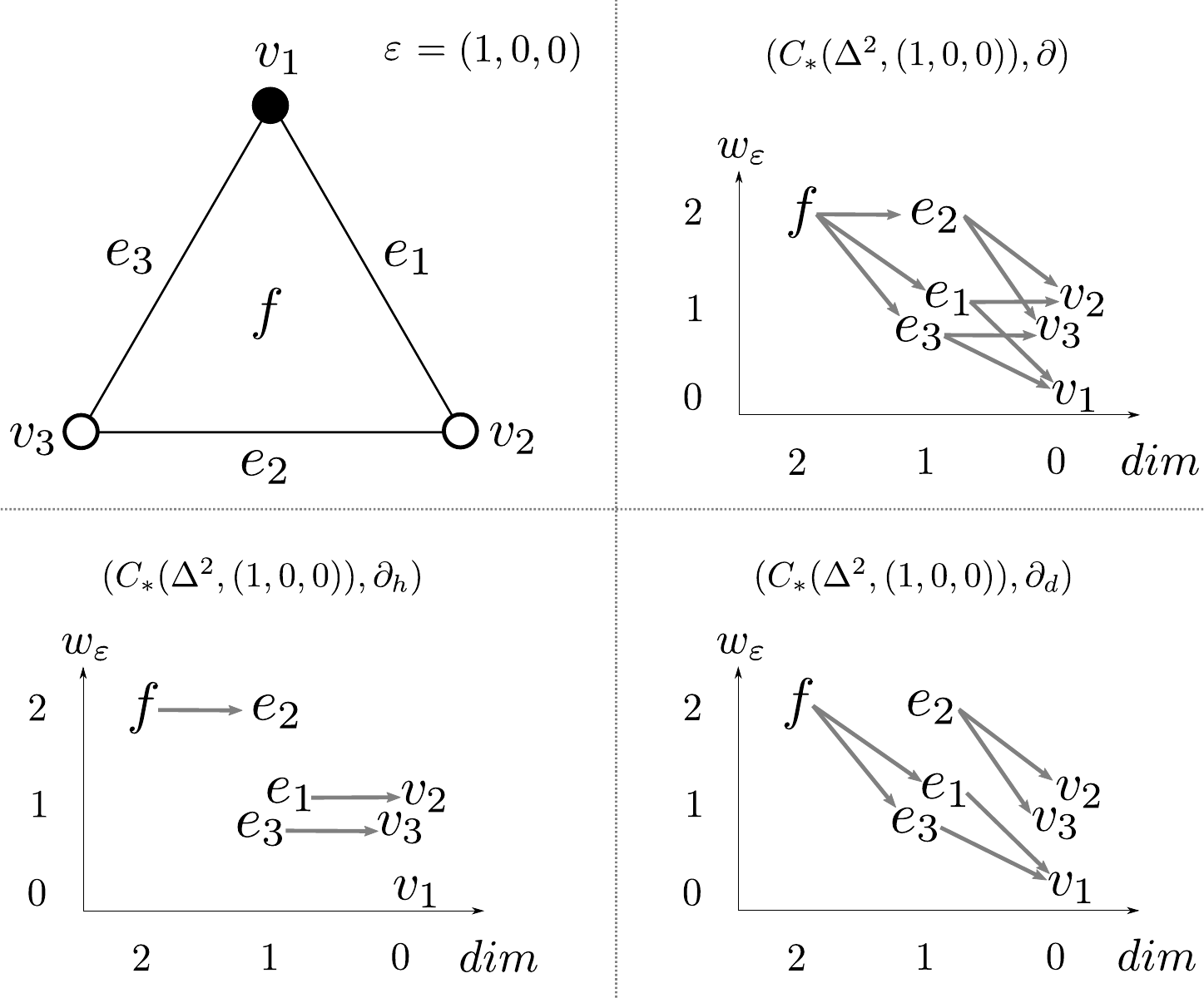}
\caption{A $(1,0,0)$-coloured $2$-simplex, with the bi-graded chain complexes described in Definition~\ref{def:hordiag} associated to it.}
\label{fig:colouredcpx2simplex}
\end{figure}
\begin{figure}[ht]
\includegraphics[width=10cm]{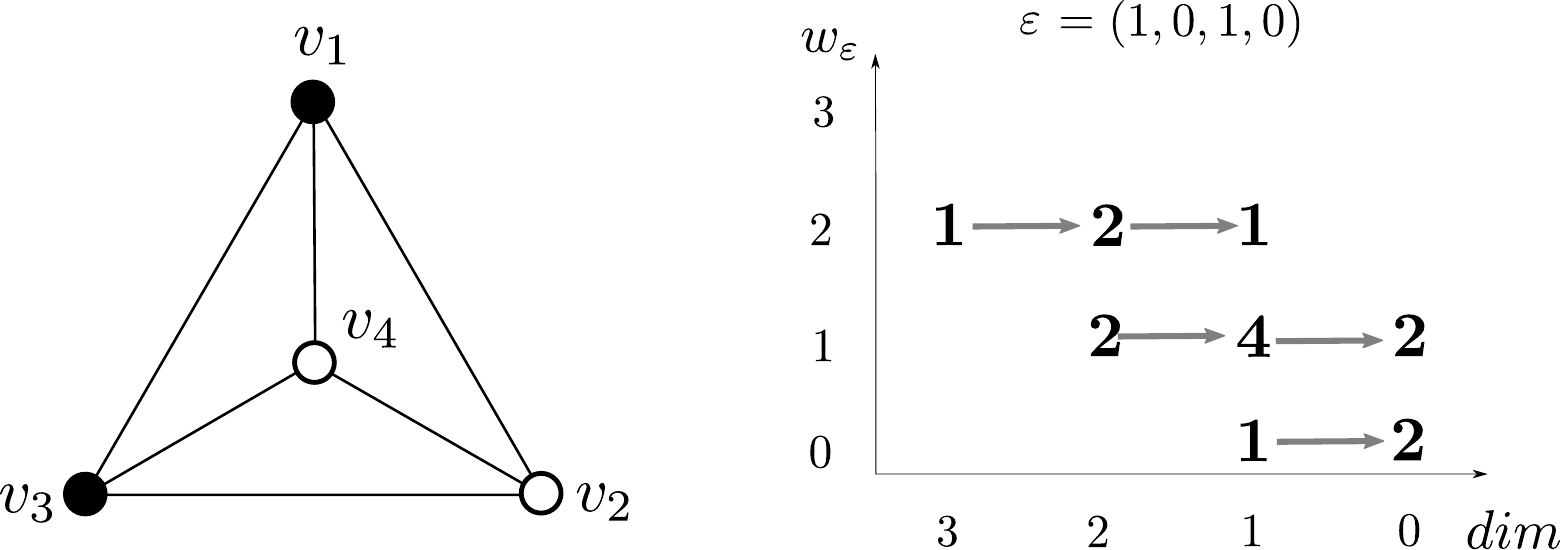}
\caption{The bi-coloured $3$-simplex $(\Delta^3, (1,0,1,0))$ and the corresponding horizontal complex $\left(C_*(X,(1,0,1,0),*), \partial_h\right)$, where the numbers in the table denote the (non-zero) ranks of the complex in that bi-grading. The differential is depicted with (horizontal) arrows. The total homology $\HH^h(\Delta^3, (1,0,1,0))$ (which can be easily obtained by explicitly computing the differentials) is $\F\langle v_1\rangle$ in bi-degree $(0,0)$.}
\label{fig:hor3simplex}
\end{figure}

\begin{defi}\label{def:hordiag}
The \emph{$\varepsilon$-horizontal homology} $\HH^h(X,\varepsilon)$ is the homology of the bi-graded complex $\left(C(X,\varepsilon) , \partial_h \right)$, and the  
\emph{$\varepsilon$-diagonal homology} $\HH^d(X,\varepsilon)$ is the homology of the bi-graded complex $\left(C(X,\varepsilon)  , \partial_d \right)$.

In particular, we can write
\begin{equation}
\HH^h(X,\varepsilon) = \bigoplus_{\substack{ i = 0, \ldots,\, dim(X) \\k = 0, \ldots,\, dim(X)+1}} \HH^h_i(X,\varepsilon,k)
\end{equation}
where each summand on the right is in bi-degree $(dim, w_\varepsilon) = (i,k)$.

As the simplicial differential $\partial$ maps $C_*(X,\varepsilon,  \le k)$ into itself,  it is also possible to define a further sequence of homology groups $\mathcal{H}_*(X,\varepsilon,k)$, defined as $\HH\left(C_*(X,\varepsilon,  \le k), \partial \right)$. 
\end{defi}

Note that the $\varepsilon$-horizontal homology of $X$ is just the homology of the associated graded object\footnote{The associated graded object is the homology of the quotient of two consecutive filtration levels, hence it only has the grading-preserving part of the differential, which is $\partial_h$ in our case.} for the filtered complex $\left(C_*(X,\varepsilon,  \le k), \partial \right)$. Moreover, changing the ordering of the vertices in $X$ does not change the homology, up to isomorphism.

\begin{rmk}\label{rmk:isohordiag}
As inverting the colour of all vertices and exchanging the horizontal and diagonal differentials yields  isomorphic chain complexes, we have $$\HH^d_*(X, \varepsilon) \cong \HH^h_*(X,\overline{\varepsilon}),$$ where $\overline{\varepsilon}$ is the unique complementary colouring such that $\overline{\varepsilon}(i) + \varepsilon(i) = 1$ for all $i = 1, \ldots, m$. 
\end{rmk}

It is possible to state explicitly the relation between $\HH^h$, $\HH^d$ and simplicial homology; indeed, note that in the position corresponding to $\varepsilon = (0,\ldots, 0)$ we get the natural isomorphism
\begin{equation}\label{eqn:zeroepsilon}
\HH^h_*(X,(0,\ldots, 0)) \cong C_*(X,(0,\ldots, 0))  \cong \bigoplus_{k \ge 0} C_k(X)[k+1].
\end{equation}
The notation $C_*(X)[k]$ indicates a complex (with trivial horizontal differential) concentrated in filtration degree $k$; this degree correction in Equation~\eqref{eqn:zeroepsilon} is required by the fact that  for this trivial colouration every simplex $\sigma$ in $X$ contains exactly $dim(\sigma) +1$ white vertices.

On the other side, we have that 
\begin{equation}\label{eqn:oneepsilon}
\HH^h_*(X,(1,\ldots, 1)) \cong \HH_*(X).
\end{equation} 
In this case we do not need any shift, as the whole complex is contained in $w$-degree $0$. Using Remark~\ref{rmk:isohordiag}, it is immediate to see that the opposite holds for the diagonal homology; namely we obtain an isomorphism with simplicial homology for the all $0$ colouring, and with the (suitably shifted) simplicial chain complex for the all $1$ colouring. 

Moreover, if we take a sequence of colourings $\{\varepsilon^p\}_{p = 1, \ldots,m}$ starting from $(0,\ldots, 0)$, ending in $(1, \ldots, 1)$ and increasing $|\varepsilon|$ at each step, it follows from Equations~\eqref{eqn:zeroepsilon} and \eqref{eqn:oneepsilon} that if we consider the sequence of groups obtained from the horizontal homologies at each colour $\varepsilon^p$ by ``forgetting'' the filtration (see Definition~\ref{def:flattening}),  we get a sequence of groups  ``interpolating'' between $ C_*(X)$ and  $\HH_*(X)$. 

We will see in Section~\ref{sec:relationtodmm} that when passing from one colour in the sequence to the next, the horizontal differential is ``enriched'' with arrows related to certain elementary discrete Morse matchings (\emph{cf.}~Theorem~\ref{thm:decomposition}).\\

Given a $\varepsilon$-coloured simplicial complex, we can consider the polynomial obtained as graded Euler characteristic of the horizontal homology.
\begin{defi}\label{def:decateghoriz}
For a coloured simplicial complex $(X, \varepsilon)$ let us define  $$\chi(X,\varepsilon) = \sum_{i = 0}^{dim(X)} \sum_{k = 0}^{dim(X)+1} (-1)^i \cdot \mathrm{rk}(\HH^h_i(X, \varepsilon,k)) \cdot t^k \in \Z [t]$$
This is just the sum of the Euler characteristics of the homologies $\HH^h(X, \varepsilon,k)$, weighted by their filtration degree through multiplication by $t$.
\end{defi}

The evaluation in $t = 1$ is independent of the colouring, namely (\emph{cf.}~Equation~\eqref{eqn:oneepsilon}):
\begin{equation*}
\chi(X, \varepsilon)(t = 1) = \chi(X, (1, \ldots,1))) = \chi(X),
\end{equation*}
where the last item is the usual Euler characteristic of $X$, while the central one is the polynomial in the only colour where all vertices are black. The first equality follows from the fact that by setting $t=1$, we are disregarding the information provided by the filtration.

Similarly, if we evaluate $\chi(X, \varepsilon)$ in $t = 0$, we get the Euler characteristic of the simplicial subcomplex generated by the black vertices: $\chi(X, \varepsilon) = \chi(Bl(X, \varepsilon))$ where $Bl(X, \varepsilon)$ is the simplicial complex $\langle \sigma \in X \,|\, w_\varepsilon (\sigma) = 0 \rangle$.

\section{Relation to discrete Morse matchings}\label{sec:relationtodmm}

Given a finite simplicial complex $X$, denote by $\hasse$ its face poset; this is the oriented graph representing the poset structure\footnote{We adopt the convention of not including the empty set among the vertices of $\hasse$.} of the simplices in $X$, with edges oriented in the direction of decreasing dimension (see Figure~\ref{fig:fromdifftohasse}). 

An alternative point of view is to consider $\hasse$ as a graphical representation of the simplicial chain complex for $C(X)$. Here edges between simplices in adjacent dimensions represent differentials (again, recall that by working over the field $\F$ with two elements we are disregarding signs) between its endpoints. In what follows we will consider both these interpretations simultaneously. 

\begin{defi}
Given a finite graph $G$, a \emph{matching on $G$} is a collection of disjoint edges in $G$. The collection of matchings on $G$ will be denoted by $Match(G)$. 

If $X$ is a finite simplicial complex, its  \emph{matching complex} $\Mat (X)$ is the simplicial complex spanned by \emph{matchings} on $\hasse$, \emph{i.e.}~subsets of disjoint edges in $\mathcal{P}(X)$. We will sometimes improperly refer to $m \in Match(\hasse)$ as a matching on $X$.
\end{defi}

Given a matching $m$ on $X$, we can invert the orientation of all of its edges in the face poset. If this operation does not create any oriented cycle in $\hasse$ we call $m$ a  \emph{(discrete) Morse matching}; these kinds of matchings consist of equivalence classes of Forman's~\cite{forman1998morse} discrete Morse functions (see also~\cite{chari2000discrete}). More concretely, a Morse matching can be thought of as being a discrete analogue of the gradient vector field of a Morse function on a smooth manifold. 

Simplices that are not matched by a Morse matching $m$ are called \emph{critical} for $m$. We will say that $m$ is \emph{perfect} if it matches all vertices of $\hasse$, and \emph{maximal} if it is not a proper subset of another Morse matching. Furthermore, if a matching $m$ comprises $k$ edges, we will write $|m| = k$. 

\begin{defi}
Let $(X, \varepsilon)$ be a coloured simplicial complex with $m$ vertices. Then the horizontal differential induces a subgraph $I(X, \varepsilon)$ of the face poset $\mathcal{P}(X)$, whose edges are the non-zero components of $\partial_h$. More precisely, one edge connecting two simplices $\sigma \supset \tau$ in $\mathcal{P}(X)$ will belong to $I(X, \varepsilon)$ if and only if $\tau$ appears with a non-zero coefficient in $\partial_h (\sigma)$.

We call \emph{elementary} all subgraphs of $\mathcal{P}(X)$ of the form $I(X,\varepsilon_i)$, where $\varepsilon_i \in \Z_2^m$ is non-zero only in its $i$-\emph{th} entry. These are all the subgraphs induced by colouring in black a single vertex of $X$. We will also call elementary the colourings with exactly one black vertex.
\end{defi}
\begin{figure}[ht]
\includegraphics[width=8cm]{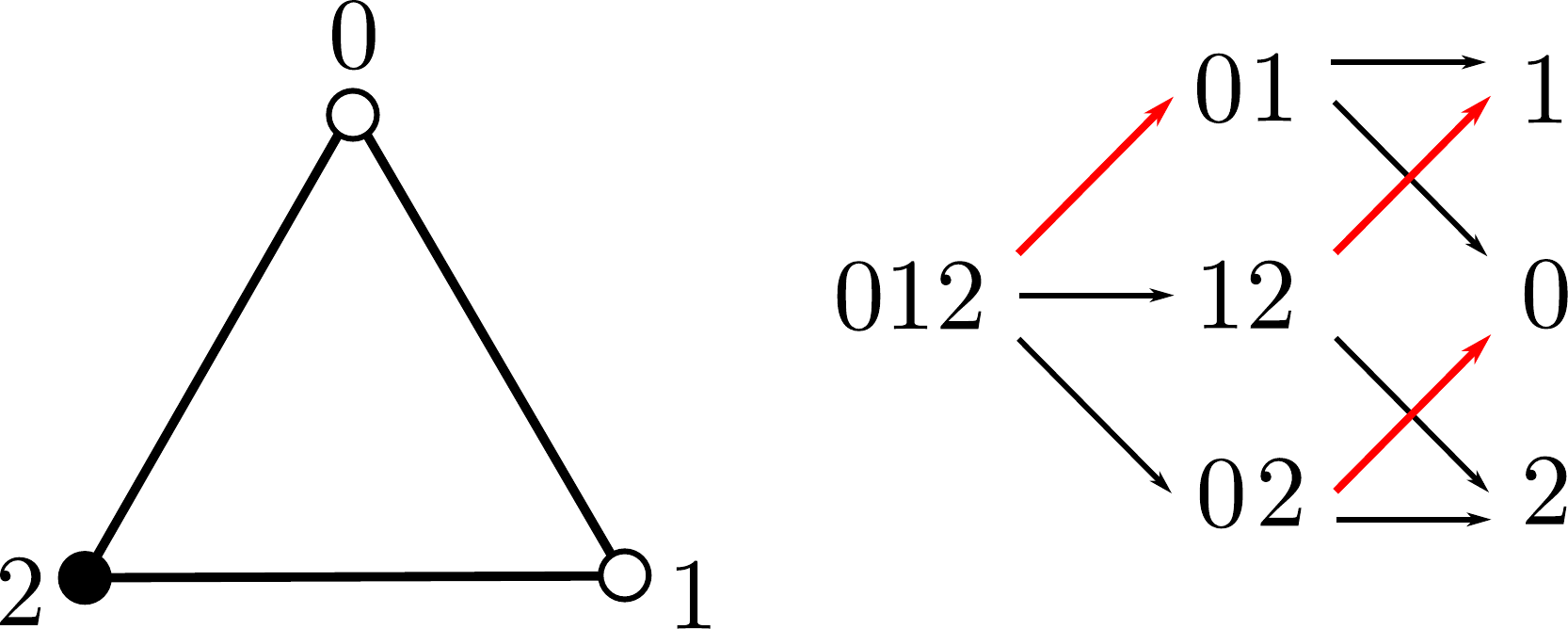}
\caption{On the left, a $\varepsilon_2$-coloured $2$-simplex; on the right the chain complex of its simplicial homology seen as a face poset, with edges of the elementary subgraph $I(\Delta^2, \varepsilon_2)$ in red.}
\label{fig:fromdifftohasse}
\end{figure}

We are going to prove below that all elementary subgraphs are in fact discrete Morse matchings on $X$; however, not all such matchings arise via this construction. 

We can nonetheless give a simple and complete  combinatorial characterisation for a colouring $\varepsilon$ guaranteeing that the subgraph $I(X,\varepsilon)$ induces a discrete Morse matching on $X$.

\begin{defi}\label{def:dalmatian}
Let $\varepsilon \neq (0, \ldots,0)$ be a colouring of the vertices of $X$. We say that $\varepsilon$ is \emph{dalmatian} if for each pair of simplices $\sigma, \tau \subseteq X$, $\sigma \cap \tau \neq \emptyset $ implies that $ V(\sigma \cup \tau)$ contains at most one black vertex. Being dalmatian is equivalent to requiring that the closures of the stars of the black vertices are pairwise disjoint.
\end{defi}

\begin{figure}[ht]
\centering
\includegraphics[width=10cm]{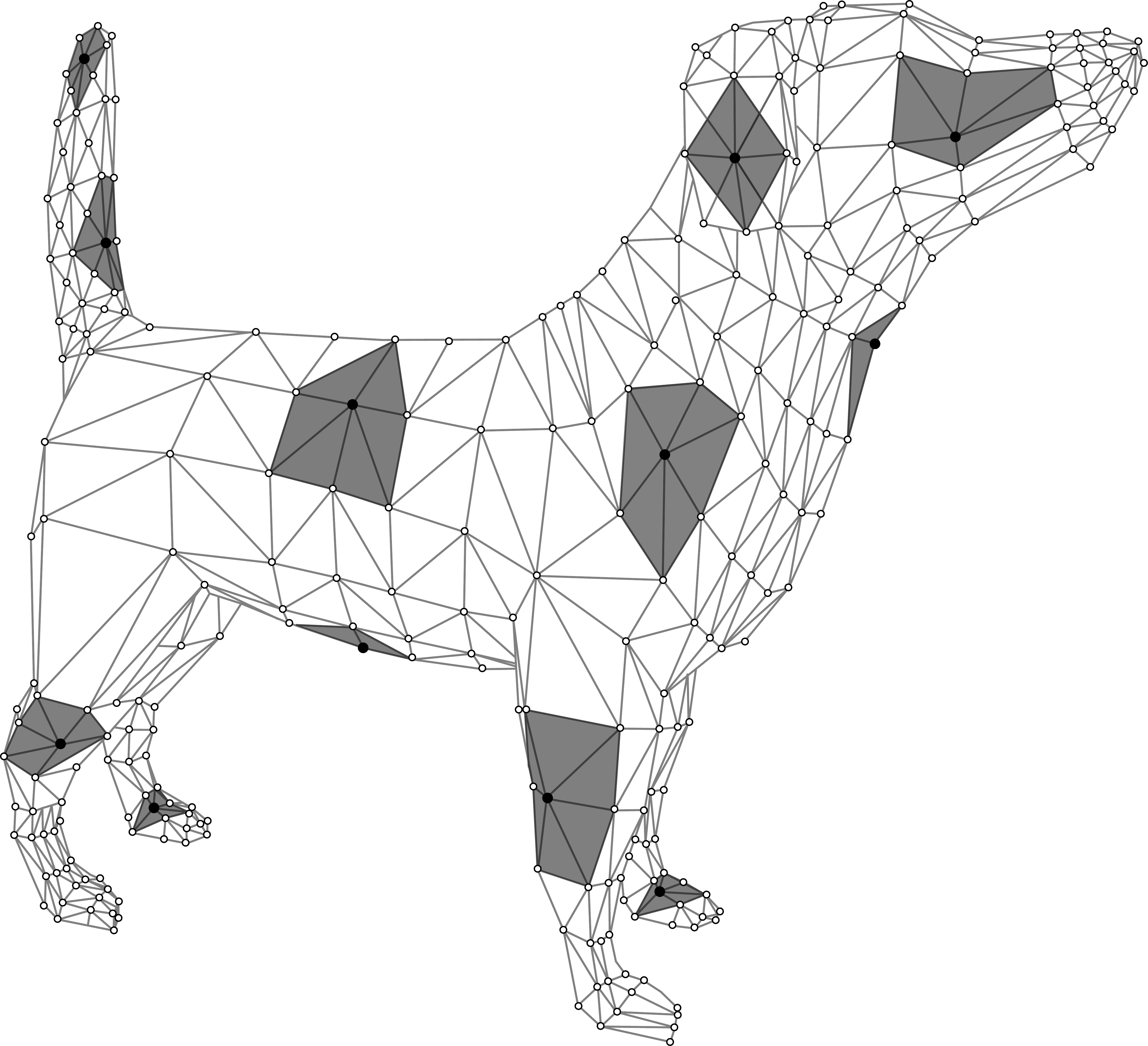}
\caption{A triangulation $X$ of $S^2$, exhibiting a dalmatian colouration. The shaded simplices are the closure of the stars of the black vertices. Picture based on \cite{dalmata}.}
\label{fig:dalmata}
\end{figure}

\begin{thm}\label{thm:dmmandsubgraphs}
The subgraph $I(X,\varepsilon) \subseteq \hasse$ is a discrete Morse matching if and only if the colouring $\varepsilon$ is dalmatian.
\end{thm}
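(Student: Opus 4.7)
The first step is to spell out $I(X,\varepsilon)$ concretely: its edges are precisely the pairs $(\sigma, \sigma\setminus\{v\})$ where $v$ is a black vertex of $\sigma$, since removing a black vertex is the only codimension-one descent preserving the white count. With this description in hand I will prove the two implications separately.

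For the forward direction ($I(X,\varepsilon)$ a Morse matching $\Rightarrow$ $\varepsilon$ dalmatian), I argue by contrapositive. Suppose $\varepsilon$ is not dalmatian, so there exist $\sigma, \tau \in X$ with $\sigma \cap \tau \neq \emptyset$ and two distinct black vertices $v, w \in V(\sigma \cup \tau)$. If some single simplex contains both black vertices, it admits two distinct down-edges of $I(X,\varepsilon)$ and we are done. Otherwise $v \in \sigma \setminus \tau$ and $w \in \tau \setminus \sigma$, and the non-empty intersection $\rho = \sigma \cap \tau$ must then be entirely white (a black vertex in $\rho$ would produce a simplex with two black vertices, reducing to the previous sub-case). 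Then $\rho \cup \{v\}$ and $\rho \cup \{w\}$ are distinct simplices of $X$ giving rise to two edges of $I(X,\varepsilon)$ incident to $\rho$, so the matching property fails.

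For the converse assume $\varepsilon$ is dalmatian. The matching property is immediate: dalmatian applied with $\sigma = \tau$ forbids two black vertices in a single simplex, and for any white simplex $\rho$ with $\rho \cup \{v\}, \rho \cup \{w\} \in X$ and $v \neq w$ both black one would have $\rho \in \overline{St(v)} \cap \overline{St(w)}$, contradicting dalmatian. For acyclicity I argue by contradiction: suppose there is an oriented cycle in $\mathcal{P}(X)$ with the matching edges of $I(X,\varepsilon)$ reversed, and track the simplices $\sigma_i$ reached just after each up-step, so that $\sigma_i = \tau_{i-1} \cup \{v_{i-1}\}$ for some black $v_{i-1}$ and $\tau_i \subsetneq \sigma_i$ is the simplex from which the next up-step departs. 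Dalmatian forces $v_{i-1}$ to be the unique black vertex of $\sigma_i$; if $v_{i-1} \in \tau_i$, then $\sigma_{i+1} = \tau_i \cup \{v_i\}$ inherits the two distinct black vertices $v_{i-1}, v_i$, a contradiction (note $v_i \neq v_{i-1}$ since $v_i \notin \tau_i$). Hence $\tau_i$ is white and non-empty, and we have $\tau_i \subseteq \sigma_i \cap \sigma_{i+1}$ while $V(\sigma_i \cup \sigma_{i+1}) \supseteq \{v_{i-1}, v_i\}$ contains two distinct black vertices, again contradicting dalmatian.

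The principal obstacle will be the acyclicity step: one must follow how the unique black vertex migrates along a hypothetical reversed cycle in the face poset and pin down two consecutive simplices whose union contains two incompatible black vertices over a shared non-empty face. The matching-property analyses in both directions are essentially book-keeping; once acyclicity is settled, the equivalence follows.
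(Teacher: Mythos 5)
Your argument is correct and follows essentially the same route as the paper's proof: both directions rest on the observations that a simplex with two black vertices has two outgoing edges of $I(X,\varepsilon)$, that an all-white simplex with two distinct black cofaces has two incoming ones, and that the dalmatian condition forces the other cofaces of a matched white face to be entirely white, which kills any oriented cycle. The only point worth patching is the second sub-case of your acyclicity step, where the dalmatian contradiction needs $v_i \neq v_{i-1}$, which you do not justify; in fact that sub-case is vacuous, since $v_{i-1}\notin\tau_i$ together with $\tau_i$ being a codimension-one face of $\sigma_i$ forces $\tau_i=\sigma_i\setminus\{v_{i-1}\}=\tau_{i-1}$, and a directed cycle cannot immediately re-traverse the reversed matched edge downward.
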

\begin{proof}
$\Longrightarrow )$ Consider a pair of simplices $\sigma, \tau \subseteq X$ intersecting non-trivially. Assume by contradiction that $V(\sigma \cup \tau)$ contains at least two black vertices. We can distinguish two cases, depending on whether there are black vertices in $\sigma \cap \tau$. If there are none, then $\sigma \cap \tau$ is the horizontal boundary of the simplices spanned by $\langle V(\sigma \cap \tau) , v_b\rangle$ for all the black vertices $v_b$ in $V(\sigma \cup \tau)$. Since we assumed that there is more than one black vertex, $I(X,\varepsilon)$ could not be a matching.

Similarly, let us note that $I(X,\varepsilon)$ cannot be a Morse matching if there exists a simplex with more than one black vertex (as there would be more than one face of this simplex with the same number of white vertices, hence more than one edge emanating from the simplex in $I(X, \varepsilon)$).  

This implies at once that $\sigma \cap \tau$ cannot contain more than one black vertex. If instead $\sigma \cap \tau$ contains exactly one black vertex, by our initial assumption (by contradiction) there must be at least another black vertex in either $\sigma$ or $\tau$, and again we would have a simplex with more than one black vertex, preventing $I(X, \varepsilon)$ from being a Morse matching.
~\\

$\Longleftarrow )$ Let us begin by showing that the condition on the colouring implies that $I(X, \varepsilon)$ is a matching. Consider a simplex $\sigma \subseteq X$; if $\sigma$ is contained in the horizontal boundary of some simplex $\sigma^\prime$, then by definition $w_\varepsilon (\sigma) = w_\varepsilon (\sigma^\prime)$. In particular $\sigma^\prime = \langle V(\sigma) , v_b\rangle$ for some black vertex $v_b$, and $\sigma$ is the only $\partial_h$-boundary of $\sigma^\prime$ (see Figure~\ref{fig:outwardflow} for an explicit example); the uniqueness of such a $\sigma$ follows  from the fact that $\sigma^\prime$ has a unique black vertex. Furthermore in this case --using the fact that $\varepsilon$ is dalmatian-- we have $w_\varepsilon (\sigma) = dim(\sigma) +1$ (\emph{i.e.}~all of its vertices are white), hence $\partial_h (\sigma) =0$, and thus $\sigma$ is not matched to any simplex of lower dimension.

If instead $\sigma$ is not the $\partial_h$ boundary of another simplex, then either $w_\varepsilon (\sigma) = dim(\sigma) +1$ or $w_\varepsilon (\sigma) = dim(\sigma)$. In the former case $\sigma$ is not matched to any simplex (\emph{i.e.}~it is a critical simplex), and in the latter $\sigma$ is only matched with its unique white face.

The last thing we need to address is the acyclicity of the matchings $I(X, \varepsilon)$. Consider two $n$-dimensional simplices $\sigma, \sigma^\prime$ sharing a codimension $1$ face $\tau$. If $\sigma$ and $\tau$ are matched by $I(X,\varepsilon)$, \emph{i.e.} the $\varepsilon$-horizontal differential of $\sigma$ is $\tau$, then (using the dalmatian hypothesis) we see that $\sigma^\prime$ must have trivial $\varepsilon$-horizontal boundary. This implies the non-existence of oriented cycles induced on $\hasse$ by $I(X, \varepsilon)$.
\end{proof}

\begin{figure}[ht]
\centering
\includegraphics[width = 5cm]{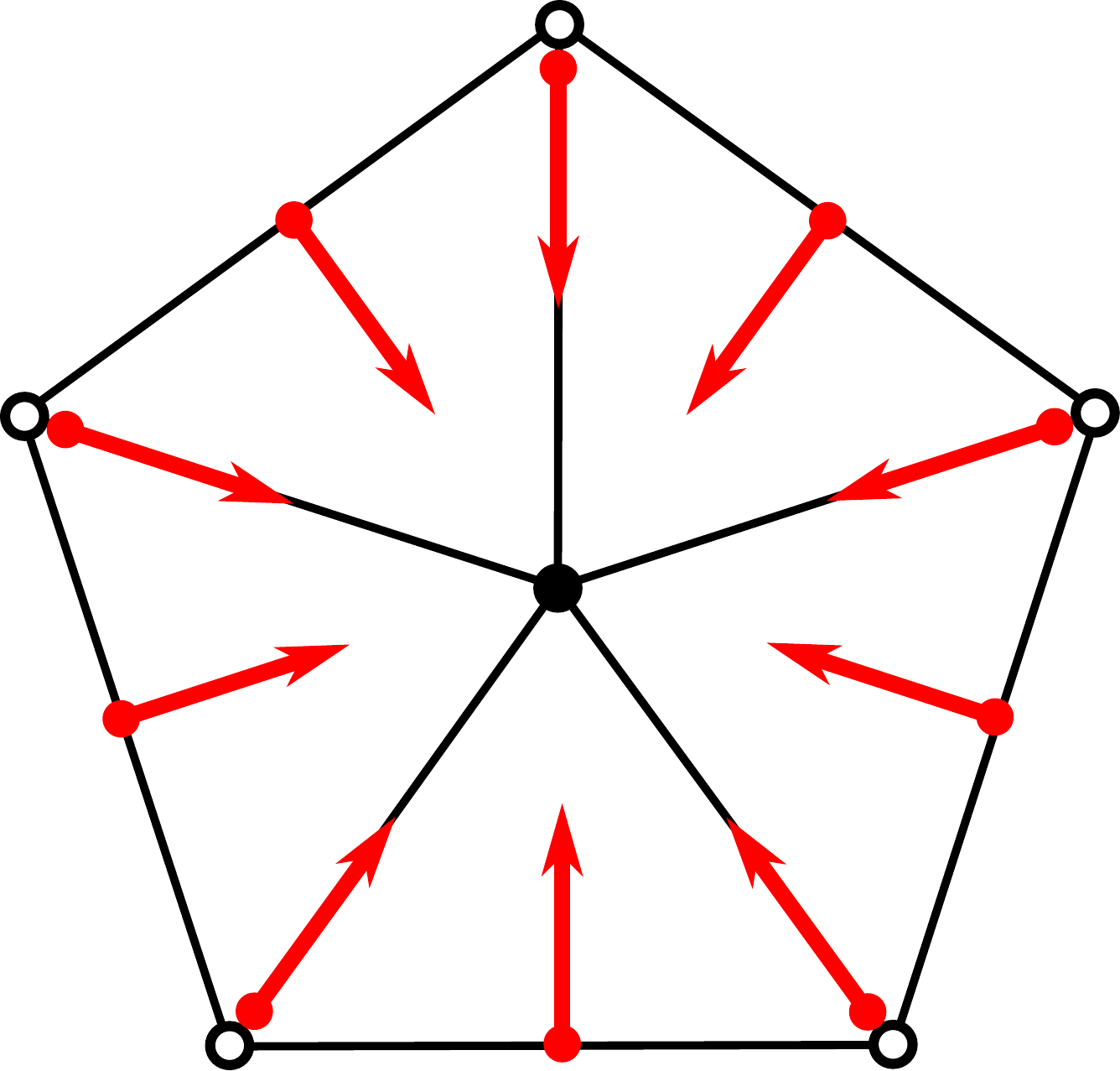}
\caption{Red arrows denote non-trivial components in the horizontal boundary, oriented backwards. The combinatorial flow induced by an elementary subgraph around the black vertex is ``inward pointing''. This localised flow cannot create oriented cycles if the colouring is dalmatian.}
\label{fig:outwardflow}
\end{figure}

It follows trivially from the definition of $I(X,\varepsilon)$ that if two simplices are matched, then they have the same $w_\varepsilon$ degree in $C_*(X, \varepsilon)$, and their dimensions differ by one. Also, let us observe that if a $n$-simplex $\sigma$ in $(X, \varepsilon)$ contains $k$ black vertices, then its horizontal differential has exactly $k$ components; equivalently, $I(X, \varepsilon)$ has $k$ edges emanating from $\sigma$.

\begin{exa}
We outline here how to determine $I(X, \varepsilon_i)$ for the elementary subgraphs of the simplices $\Delta^m$; it turns out that all of these are in fact maximal. (This holds more generally for all simplicial complexes $X$ for which there exists a vertex $v$ such that $\overline{St(v)} = X$.) 

Consider the simplex $\Delta^m$ with vertices $\{v_1, \ldots, v_{m+1}\}$; we can assume wlog that $v_1$ is the only black vertex, so $\varepsilon = \varepsilon_1$. Then we claim that the only vertex of $\mathcal{P}(\Delta^m)$ which is not matched by $I(\Delta^m, \varepsilon_1)$ is $v_1$. 

This can be seen recursively: the whole simplex $\langle v_1, \ldots, v_{m+1} \rangle$ is paired with the only codimension $1$ face not containing $v_1$; then all other faces either contain $v_1$ or do not. In the former case their unique $\varepsilon$-horizontal boundary is obtained by removing $v_1$, while in the latter case we have the dual situation (see Figures~\ref{fig:simplexcomplex} and \ref{fig:colouredcpx2simplex}), as they are matched with the simplex obtained by adding $v_1$.  
\begin{figure}[ht]
\centering
\includegraphics[width = 11cm]{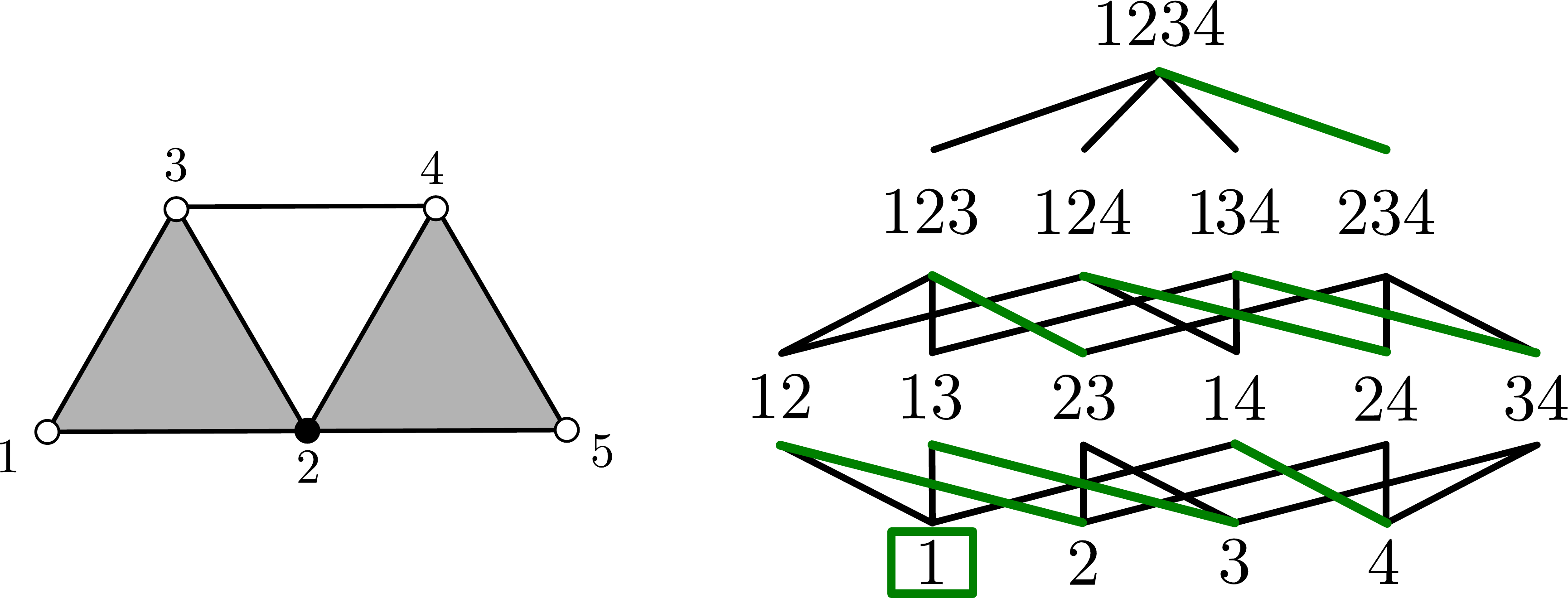}
\caption{On the left, a simplicial complex; The colouration displayed induces a maximal Morse matching on its face poset. There are exactly two critical simplices, $\langle 3,4 \rangle$ in dimension $1$ and $\langle 2 \rangle$ in dimension $0$, coherently with the fact that $X$ has the homotopy type of $S^1$. On the right, a maximal Morse matching on the face poset of the $3$-simplex, induced by colouring in black only the vertex $1$. Matched edges are in green, and the only generator of the horizontal homology (and only unmatched simplex) is encased in the green square.}
\label{fig:simplexcomplex}
\end{figure}
\end{exa}

As described in Forman's seminal paper~\cite{forman1998morse}, it is possible to compute the homology of a simplicial complex using discrete Morse matchings; in particular, there is an injection between generators of $\HH_n (X)$ and critical $n$-simplices of a discrete Morse matching on $\hasse$. The following easy consequence of Theorem~\ref{thm:dmmandsubgraphs} implies that, in the special case of Morse matchings induced by colourings, we get a bijection between these critical simplices and generators of the horizontal homology:

\begin{cor}\label{cor:criticalpoints}
If the $\varepsilon$-coloured simplicial complex $(X, \varepsilon)$ is dalmatian, then the generators of $\HH^h_n(X, \varepsilon)$ are in bijection with the critical $n$-cells of the Morse matching $I(X,\varepsilon)$.
\end{cor}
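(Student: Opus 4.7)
The plan is to read the homology off directly from the very explicit form of $\partial_h$ forced by the dalmatian hypothesis. By Definition~\ref{def:dalmatian} (taking $\sigma=\tau$), every simplex of $X$ contains at most one black vertex, so the chain module splits as $C_n(X,\varepsilon) = W_n \oplus B_n$, where $W_n$ is spanned by all-white $n$-simplices and $B_n$ by $n$-simplices with exactly one black vertex.

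First I would describe $\partial_h$ on each summand, essentially repeating the case analysis already performed in the $\Longleftarrow$ direction of Theorem~\ref{thm:dmmandsubgraphs}. For $\sigma \in W_n$ we have $w_\varepsilon(\sigma) = n+1$, strictly larger than the weight of any codimension-one face, so $\partial_h \sigma = 0$; for $\sigma \in B_n$ with unique black vertex $v_b$ the only codimension-one face of the same weight $n$ is $\sigma \setminus \{v_b\} \in W_{n-1}$, so $\partial_h \sigma$ is this single term.

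Next I would split $W_n = W_n^{\mathrm{mat}} \oplus W_n^{\mathrm{crit}}$, where $W_n^{\mathrm{mat}}$ is spanned by those white $\tau$ for which some (necessarily unique, by the dalmatian hypothesis) $\tau \cup \{v_b\} \in X$, and $W_n^{\mathrm{crit}}$ is the complementary summand. From the description of $I(X,\varepsilon)$ in the proof of Theorem~\ref{thm:dmmandsubgraphs}, the simplices matched by $I(X,\varepsilon)$ are exactly those in $B_n \cup W_n^{\mathrm{mat}}$, so the critical $n$-cells form a basis of $W_n^{\mathrm{crit}}$.

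The key step is to check that $\partial_h \colon B_n \to W_{n-1}^{\mathrm{mat}}$ is a bijection of bases. Surjectivity is built into the definition of $W^{\mathrm{mat}}$, and injectivity uses dalmatianness directly: if $\sigma, \sigma' \in B_n$ share the same white codimension-one face $\tau$, then $\sigma \cap \sigma' \supseteq \tau \neq \emptyset$, so $V(\sigma \cup \sigma')$ contains at most one black vertex, forcing $\sigma = \sigma'$. It follows that $\ker \partial_h|_{C_n} = W_n$ while $\mathrm{im}\, \partial_h|_{C_{n+1}} = W_n^{\mathrm{mat}}$, whence $\HH^h_n(X,\varepsilon) \cong W_n^{\mathrm{crit}}$, which is precisely the $\F$-span of the critical $n$-cells of $I(X,\varepsilon)$. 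No serious obstacle is anticipated; the only subtle point is the injectivity step above, and this is exactly where the dalmatian condition truly bites.
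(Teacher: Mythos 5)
Your decomposition $C_n(X,\varepsilon) = W_n \oplus B_n$ and the explicit description of $\partial_h$ on each summand are sound, and the argument amounts to a hands-on verification of what the paper's proof delegates to the general theory of acyclic matchings (the paper merely observes that matched pairs form acyclic ``atoms'' in Kozlov's sense while critical cells carry no differential). Your injectivity step for $\partial_h\colon B_n \to W_{n-1}^{\mathrm{mat}}$ is indeed exactly where the dalmatian hypothesis is needed, and it is correct.

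There is, however, one edge case you have overlooked: degree $n=0$. A black vertex $\langle v_b\rangle \in B_0$ has no codimension-one face at all (the paper works with the unreduced complex, reserving $\widetilde{\HH}$ for the reduced theory), so $\partial_h\langle v_b\rangle = 0$ and $\langle v_b\rangle$ is a \emph{critical} cell of $I(X,\varepsilon)$, not a matched one; compare Proposition~\ref{prop:dalmatianhor}, whose first summand is precisely $\bigoplus_{r\in b(\varepsilon)}\F\langle v_r\rangle_{(0,0)}$. Consequently your claims that the matched simplices are exactly those in $B_n\cup W_n^{\mathrm{mat}}$ and that $\ker \partial_h|_{C_n}=W_n$ both fail for $n=0$: the critical $0$-cells form a basis of $W_0^{\mathrm{crit}}\oplus B_0$, and $\ker\partial_h|_{C_0}=W_0\oplus B_0$. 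Since a black vertex is also never in the image of $\partial_h$ (the horizontal boundary of a $1$-simplex is either zero or its white vertex), one gets $\HH^h_0(X,\varepsilon)\cong W_0^{\mathrm{crit}}\oplus B_0$, so the asserted bijection with critical $0$-cells does survive --- but only after correcting both sides of it by the same summand $B_0$. With that patch the proof is complete.
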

\begin{proof}
We can start by observing that, since the horizontal differentials in $C(X, \varepsilon)$ are identified with the edges of $I(X, \varepsilon)$, all critical cells of $I(X, \varepsilon)$ are automatically non-trivial in $\HH^h(X, \varepsilon)$. On the other hand, two matched simplices form an acyclic subcomplex (these are called \emph{atoms} in \cite[Ch.~11.3.2]{kozlov2007combinatorial}), and are thus trivial in $\HH^h(C, \varepsilon)$.
\end{proof}
As a consequence of Corollary~\ref{cor:criticalpoints}, we can use the horizontal homology to measure ``how far'' away a dalmatian $I(X, \varepsilon)$ is from being a perfect Morse matching in any given homological degree, by considering the difference $\mathrm{rk}(\HH^h_i(X, \varepsilon)) - \mathrm{rk}(\HH_i(X))$.

\begin{defi}\label{def:flattening}
Given a bi-graded chain complex $C = \bigoplus_{i,k \in \N} C(i,k)$, we can consider its \emph{flattening}, that is the singly graded complex $$\mathcal{F}(C) = \left( C^f_i = \bigoplus_k C(i,k),\; \partial^f =  \bigoplus_k \partial_{i,k}\right)$$
obtained by ``forgetting'' the filtration. 
\end{defi}

As an example, if $\varepsilon = (0, \ldots,0)$, then $\mathcal{F}(\HH^h_*(X, \varepsilon)) \cong C_*(X)$, as the horizontal differential is trivial (\emph{cf.}~Equation~\eqref{eqn:zeroepsilon}).

\begin{rmk}
If $\varepsilon$ is dalmatian, Corollary~\ref{cor:criticalpoints} together with a result of Kozlov \cite[Thm.~11.24]{kozlov2007combinatorial} imply the existence of a further differential $\partial_{\mathcal{M}}$ on the flattening  $\mathcal{F}(\HH^h(X, \varepsilon))$ such that $$\HH_*\left( \mathcal{F}(\HH^h(X, \varepsilon)), \partial_{\mathcal{M}} \right) \cong \HH_*(X).$$
\end{rmk}

It follows immediately from Theorem~\ref{thm:dmmandsubgraphs} that all elementary subgraphs are in fact Morse matchings on $\hasse$; we will now show that in some sense they provide the ``building blocks'' for all the homologies $\HH^h(X, \varepsilon)$ (see Figure~\ref{fig:simplexpartition}). This yields a unique non-trivial decomposition of the simplicial differential in components induced by discrete Morse matchings. Recall that $\varepsilon_i$ denotes the colouring in which $v_i$ is the unique black vertex.

\begin{thm}\label{thm:decomposition}
Let $(X,\varepsilon)$ be a finite bi-coloured simplicial complex with vertices $\{v_1, \ldots, v_m\}$. Then
$$I(X, \varepsilon) = \bigsqcup_{\{i \,|\,\varepsilon(i) =1\}} I(X, \varepsilon_i). $$
Here the disjoint union indicates that the edges are distinct, but might intersect in their endpoints.
\end{thm}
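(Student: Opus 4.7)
The plan is to work directly from the definitions: characterise the edges of $I(X,\varepsilon)$ by identifying exactly which codimension-one faces appear in $\partial_h(\sigma)$, then match those edges one-to-one with elementary contributions.

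First, I would unwind what it means for an edge $(\sigma,\tau)$ with $\sigma\supset\tau$ and $\dim\tau=\dim\sigma-1$ to belong to $I(X,\varepsilon)$. Such a face has the form $\tau = \sigma\setminus\{v\}$ for a unique vertex $v\in\sigma$, and writing out the simplicial differential shows that $\tau$ contributes to $\partial_h(\sigma)$ precisely when $w_\varepsilon(\tau)=w_\varepsilon(\sigma)$. Since $w_\varepsilon(\sigma\setminus\{v\}) = w_\varepsilon(\sigma) - (1-\varepsilon(v))$, this condition is equivalent to $\varepsilon(v)=1$, i.e.\ to the removed vertex being black. Hence
\[
E(I(X,\varepsilon)) \;=\; \bigl\{(\sigma,\,\sigma\setminus\{v\})\;\big|\; v\in\sigma,\ \varepsilon(v)=1\bigr\}.
\]

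Next I would apply this same characterisation to each elementary colouring $\varepsilon_i$, for which the only black vertex is $v_i$. The formula above specialises to
\[
E(I(X,\varepsilon_i)) \;=\; \bigl\{(\sigma,\,\sigma\setminus\{v_i\})\;\big|\; v_i\in\sigma\bigr\}.
\]
Taking the union over the indices $i$ with $\varepsilon(i)=1$ and comparing with the description of $E(I(X,\varepsilon))$ gives set-theoretic equality of edge sets immediately, since every edge of $I(X,\varepsilon)$ removes exactly one black vertex of $X$.

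Finally, to justify the disjoint-union notation, I would observe that the vertex $v$ removed in an edge $(\sigma,\sigma\setminus\{v\})$ is intrinsically determined by the pair of simplices (it is the unique element of $\sigma\setminus\tau$). Consequently, an edge of $I(X,\varepsilon)$ lies in $I(X,\varepsilon_i)$ for exactly one index $i$, namely the one with $v_i=v$, so the elementary subgraphs share no common edges (though they may share vertices of $\mathcal{P}(X)$ when two different elementary matchings emanate from the same simplex containing several black vertices). There is no real obstacle here; the only thing to be careful about is to phrase the statement in terms of edges of $\mathcal{P}(X)$, matching the convention in the theorem that endpoints are allowed to coincide while edges are counted with their identity.
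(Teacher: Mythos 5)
Your proposal is correct and follows essentially the same route as the paper: both arguments rest on the observation that an edge $(\sigma,\sigma\setminus\{v\})$ lies in $I(X,\varepsilon)$ exactly when the removed vertex $v$ is black, and that this vertex is uniquely determined by the edge, which gives both the equality of edge sets and the disjointness of the elementary pieces. Your version merely front-loads the explicit description of $E(I(X,\varepsilon))$, whereas the paper derives the same facts in the order disjointness, containment, saturation.
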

\begin{proof}
We begin by showing that $E(I(X, \varepsilon_i)) \cap E(I(X,\varepsilon_j)) = \emptyset$ if $i\neq j$. This follows from the fact that if $e$ is an edge in $I(X,\varepsilon_i)$, connecting the simplex $\sigma$ to the simplex $\tau$, then $V(\sigma) = V(\tau) \cup \{v_i\}$. In particular, $e$ cannot belong to  $I(X, \varepsilon_j)$ for any $j \neq i$.

We can then argue that if an edge $e$ in the face poset belongs to $E(I(X, \varepsilon))$, then it belongs to a unique elementary subgraph $I(X, \varepsilon_i)$ for some $i \in 1, \ldots, |V(X)|$. Indeed, assume that the edge $e$ connects $\sigma$ to $\tau$; this implies that $w_\varepsilon (\sigma) = w_\varepsilon (\tau)$, and thus $\sigma$ has exactly one more black vertex $v_i$ than $\tau$, thus $e \in I(X, \varepsilon_i)$.

To conclude, we only need to show that if an edge $e \in E(I(X,\varepsilon))$ belongs to some $I(X,\varepsilon_i)$, then  all other edges in $I(X,\varepsilon_i)$ are contained in $I(X, \varepsilon)$ as well. This follows immediately from the previous analysis: if an edge belongs to $I(X, \varepsilon_i)$, then the vertex $v_i$ must be black in $\varepsilon$. Moreover, for each simplex in $X$ containing $v_i$, the sub-simplex obtained by removing $v_i$ has the same $w_\varepsilon$ weight, hence the edge belongs to both $I(X,\varepsilon)$ and $I(X, \varepsilon_i)$.
\end{proof}
\begin{figure}[ht]
\centering
\includegraphics[width = 8cm]{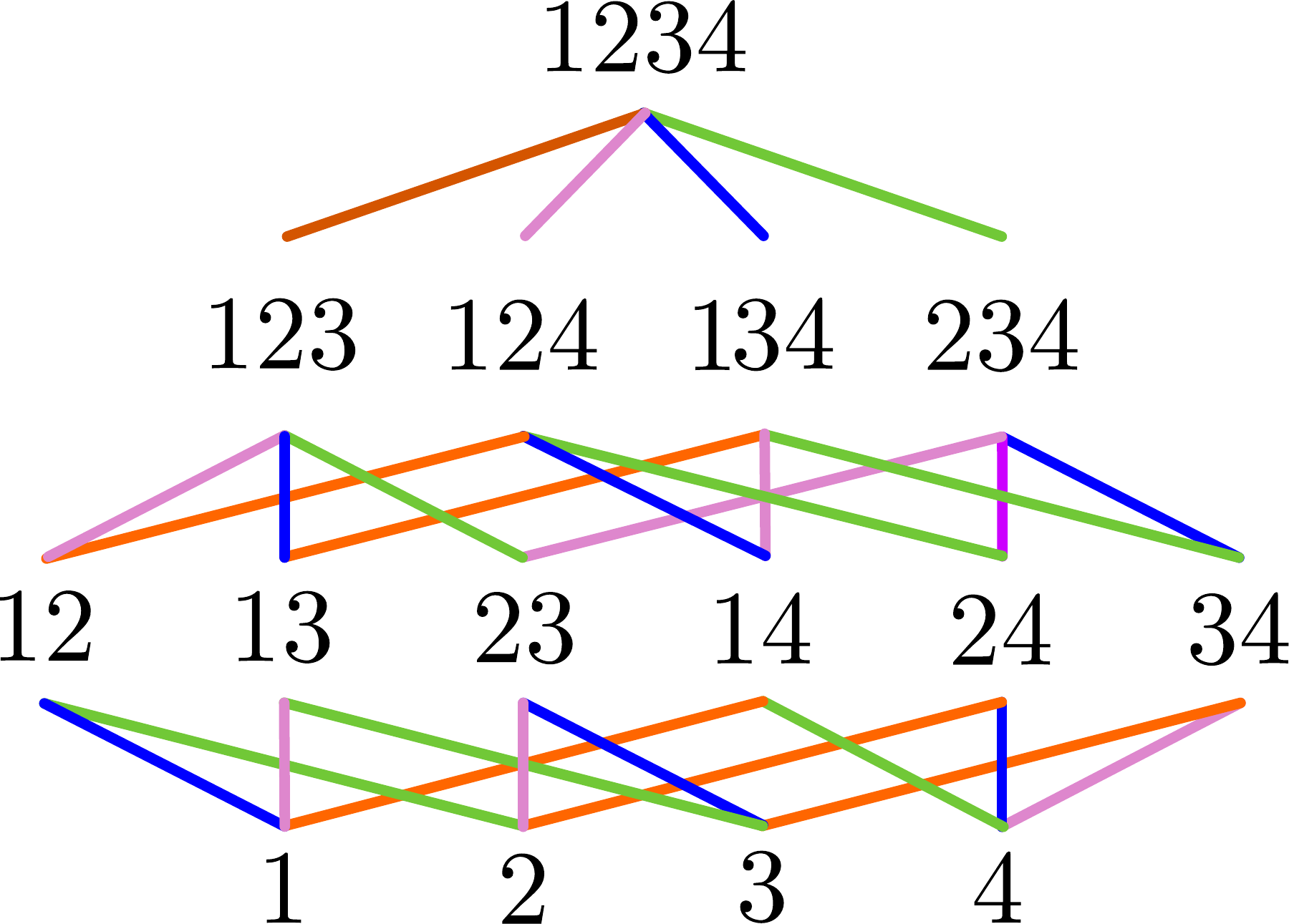}
\caption{The partition of the edges in $\mathcal{P}(\Delta^3)$ for $\varepsilon = (\textcolor{Green}1\color{black},\color{blue}1\color{black},\textcolor{pink}1\color{black},\color{orange}1\color{black})$. The first, second, third and fourth black vertices induce respectively the green, blue, violet and orange colouring of the edges. The union of these edges is the horizontal differential in the flattened chain complex $\mathcal{F}(C(\Delta^3, (1,1,1,1))) \cong C(\Delta^3)$.}
\label{fig:simplexpartition}
\end{figure}

As a noteworthy special case, if we choose the all black colouring $\varepsilon = (1, \ldots, 1)$, we can uniquely partition the simplicial differential on $X$ in $|V(X)|$ sub-differentials, each of which induces a Morse matching on $\hasse$, as shown in Figure~\ref{fig:simplexpartition} for $X = \Delta^3$. 

This could be used to efficiently compute all the horizontal homologies of $X$ as follows (see also  Section~\ref{sec:graphdiss}):
first compute the $m$ horizontal homologies in the elementary colours, then the matrices representing the horizontal differentials for a generic colour can be easily obtained by applying Theorem~\ref{thm:decomposition}.

Similarly, the relation with discrete Morse matchings outlined in this section allows us to easily compute the horizontal homology of $(X,\varepsilon)$ for dalmatian colourings; in particular, generators of $\HH^h(X, \varepsilon)$ are in bijection with  black vertices, together with simplices of maximal filtration degree (\emph{i.e.} such that $w_\varepsilon (\sigma) = dim(\sigma) +1$) that do not belong to the link of the black vertices. In what follows, the notation $\F\langle \sigma\rangle_{(a,b)}$ will denote a copy of $\F$ generated by the simplex $\sigma$ in bi-degree $(a,b)$.

\begin{prop}\label{prop:dalmatianhor}
Let $(X,\varepsilon)$ be a dalmatian simplicial complex; call $b(\varepsilon)$ the set of indices corresponding to black vertices. Then 
\begin{equation}\label{eqn:horizz}
\HH^h(X,\varepsilon) \cong \bigoplus_{r \in b(\varepsilon)} \F\langle v_r\rangle_{(0,0)}
\bigoplus_{\substack{\sigma \subseteq X \\ \sigma \nsubseteq \cup_{r \in b(\varepsilon)} \overline{St(v_r)}}} 
\F \langle \sigma \rangle_{(dim(\sigma), dim(\sigma)+1)}.
\end{equation}
\end{prop}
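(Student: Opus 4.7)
Since $\varepsilon$ is dalmatian, my plan is to invoke Corollary~\ref{cor:criticalpoints} to identify the generators of $\HH^h(X,\varepsilon)$ with the critical cells of the Morse matching $I(X,\varepsilon)$, and then enumerate those critical cells directly, reading off the bi-degree $(dim(\sigma), w_\varepsilon(\sigma))$ in which each one lives. The enumeration splits cleanly along the colour content of a simplex, which by the dalmatian hypothesis is either ``all-white'' or ``exactly one black vertex''.

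First I would dispatch the simplices carrying a black vertex, essentially using the bookkeeping already inside the $(\Longleftarrow)$ direction of the proof of Theorem~\ref{thm:dmmandsubgraphs}. If $\sigma=\{v_r\}$ for $r\in b(\varepsilon)$, then $\partial_h\sigma=0$; moreover, by dalmatian, no strictly larger simplex of weight $0$ can contain $\{v_r\}$ (two black vertices cannot lie in a common simplex), so $\{v_r\}$ is critical and contributes a generator in bi-degree $(0,0)$. If instead $\sigma$ has exactly one black vertex $v_r$ and $dim(\sigma)\ge 1$, then $w_\varepsilon(\sigma)=dim(\sigma)$ and $\sigma$ is matched to its unique all-white facet $\sigma\setminus\{v_r\}$ of the same weight, so neither element of the pair is critical.

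Next I would treat an all-white simplex $\sigma$, for which $w_\varepsilon(\sigma)=dim(\sigma)+1$ and $\partial_h\sigma=0$ automatically. The key point is that $\sigma$ appears in the $\partial_h$-expansion of some larger simplex iff $\sigma\cup\{v_r\}\in X$ for a black vertex $v_r$, i.e.\ iff $\sigma\in lk(v_r)$ for some $r\in b(\varepsilon)$; the dalmatian hypothesis guarantees that such $v_r$ is unique when it exists, since otherwise $\sigma\cup\{v_r\}$ and $\sigma\cup\{v_s\}$ would intersect in the non-empty simplex $\sigma$ while jointly containing two black vertices. Hence an all-white $\sigma$ is critical precisely when it lies in no link $lk(v_r)$ with $r\in b(\varepsilon)$, and when critical contributes a generator in bi-degree $(dim(\sigma),dim(\sigma)+1)$.

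The final step is a purely combinatorial reformulation of the critical condition in the form appearing in the statement: for any simplex $\sigma$, $\sigma\subseteq\overline{St(v_r)}$ iff either $v_r\in\sigma$ or $\sigma\in lk(v_r)$, so $\sigma\nsubseteq\bigcup_{r\in b(\varepsilon)}\overline{St(v_r)}$ is equivalent to $\sigma$ being all-white and sitting in no link of a black vertex. Collecting the two families of critical cells in their respective bi-degrees then yields exactly the right-hand side of~\eqref{eqn:horizz}. I do not expect any serious obstacle beyond this last bookkeeping translation; everything else follows by direct inspection from the matching already constructed in Theorem~\ref{thm:dmmandsubgraphs} together with Corollary~\ref{cor:criticalpoints}.
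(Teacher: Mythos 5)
Your proposal is correct and follows essentially the same route as the paper: both arguments use the Morse matching $I(X,\varepsilon)$ from Theorem~\ref{thm:dmmandsubgraphs} to split $\left(C(X,\varepsilon),\partial_h\right)$ into acyclic matched pairs plus critical cells, identify the black vertices as the $(0,0)$ generators, and identify the remaining critical cells as the all-white simplices lying outside every $\overline{St(v_r)}$. Your enumeration is just a slightly more explicit version of the paper's bookkeeping, with Corollary~\ref{cor:criticalpoints} invoked where the paper re-derives the same fact inline.
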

\begin{proof}
Since by Theorem~\ref{thm:dmmandsubgraphs} we know that $I(X, \varepsilon)$ is a Morse matching, the complex $\left( C(X, \varepsilon), \partial_h \right)$ can be decomposed in pairs of matched generators (which constitute an acyclic pair), or unmatched generators, which are non-trivial in $\HH^h(X, \varepsilon)$. We just need to identify these latter generators with the two summands in Equation~\eqref{eqn:horizz}. 

As the colouring is dalmatian, the black vertices in $(X, \varepsilon)$ are the only simplices in $w_\varepsilon$ degree $0$. This implies they are not matched in $I(X, \varepsilon)$, and they give the left summand. 

For the rightmost term on the RHS of Equation~\eqref{eqn:horizz}, note that all the simplices in the set $\{\sigma \subseteq X\,|\, \sigma \nsubseteq \cup_{r \in b(\varepsilon)} \overline{St(v_r)}\}$ are not matched. These are the only unmatched ones, since each simplex with non-trivial differential contain a unique black vertex; hence the only matched simplices either contain a black vertex or are the face opposite to a black vertex in a simplex, and thus in particular lay in the closed star of said vertex (as shown in Figure~\ref{fig:outwardflow}).
\end{proof}

The ``local'' acyclic matchings induced by elementary colourings (as in Figure~\ref{fig:simplexcomplex}) were previously considered in a different context in \cite{singh2020higher}, where they are used to recursively construct global acyclic matchings on the whole complex. We review and slightly extend their construction using our notations. Start from a dalmatian colouring $\varepsilon$ on $X$; remove all pairs of matched simplices (note that this leaves out the critical $0$-cells given by the black vertices of the colouring). In particular, all black vertices are critical $0$-simplices. Then iterate the same process on what is left; more precisely, choose a sequence $\{D_r\}_{r = 0, \ldots, \mu(X)}$ of dalmatian colourings for $X$, with the properties $$V\left(\bigcup_{r = 0}^{\mu(X)} \bigcup_{v \in D_r} \overline{St(v)} \right) = V(X),$$ and $$D_{p+1} \subseteq V\left(X \setminus \bigcup_{r = 1}^p \bigcup_{v \in D_r} \overline{St(v)}\right).$$
In particular, all vertices in $X$ are either coloured in black in some $D_r$, or are at distance $1$ from some coloured vertex. 

\begin{figure}[ht]
\centering
\includegraphics[width=10cm]{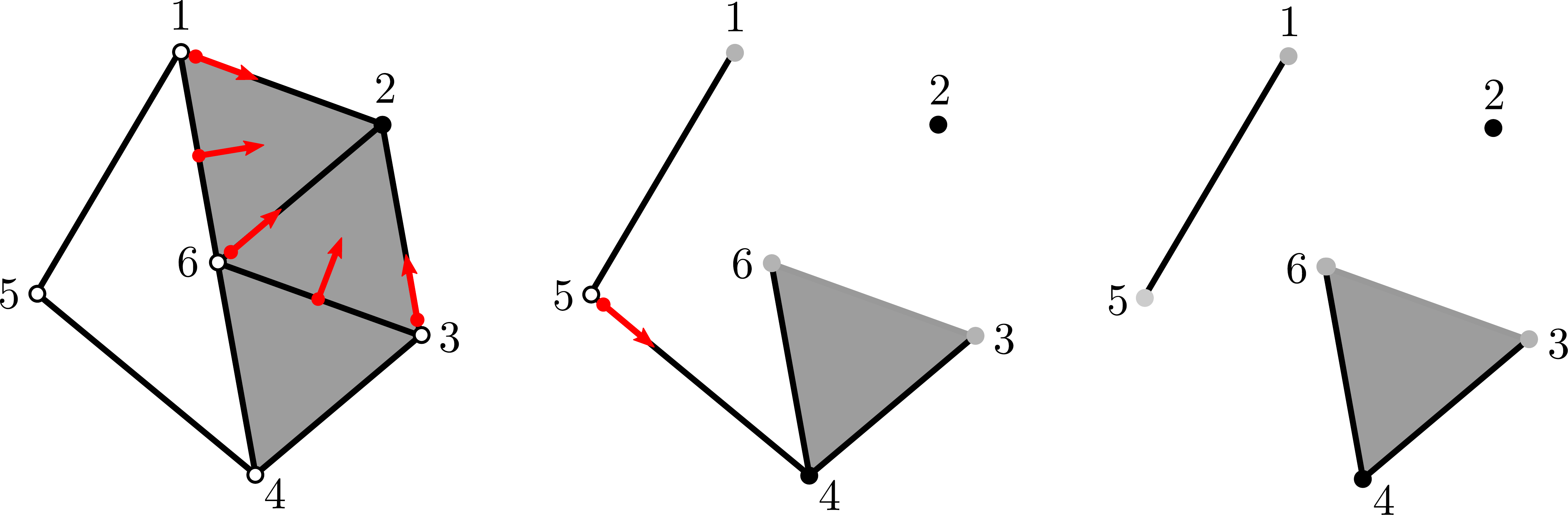}
\caption{The sequence of spaces and matchings $(X_p,M_p)$ for the complex $X$ on the left. We start by colouring the vertex $2$ in black, and in the second step we colour $4$. Grey vertices are to be regarded as ``removed'', and red arrows denote matched pairs. The end result consists of $2$ critical $0$-cells (the two black vertices), $3$ critical $1$-cells (the edges $\langle15\rangle,\langle46\rangle$ and $\langle34\rangle$) and the critical $2$-cell $\langle346\rangle$.}
\label{fig:iterateddalmatian}
\end{figure}

Then define the sequence of matchings $M_p$ as the set of pairs of matched simplices with respect to the colouring $D_p$ on $X_p$. Note that $M_p$ can be identified with the subgraph $I(X,D_p) \subseteq \hasse$. 
In turn, define the sequence of spaces $X_0 = X$ and  $X_{p+1} = X_p \setminus \{\sigma  \in X_p \,|\, \sigma \in \pi, \text{ for some } \pi \in M_p\}$ (see Figure~\ref{fig:iterateddalmatian}). In other words, $X_{p+1}$ is composed by the collection of critical cells in $X_p$ with respect to the matching $M_p$. Note that $X_p$ is generally not a simplicial complex. 

\begin{prop}[\cite{singh2020higher}, Prop.~$2.4$]\label{prop:iterateddalmatian}
The matching $\displaystyle \bigsqcup_{p = 0}^{\mu(X)} M_p$ is a Morse matching on $X$.
\end{prop}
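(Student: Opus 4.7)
The plan is to verify the two defining requirements of a discrete Morse matching for the union $\bigsqcup_{p=0}^{\mu(X)} M_p$: that it is genuinely a matching (each simplex paired with at most one other), and that the induced orientation on $\hasse$ is acyclic.

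Disjointness is immediate from the construction: every simplex matched by $M_p$ lies in $X_p \setminus X_{p+1}$, and the sets $\{X_p \setminus X_{p+1}\}_p$ are pairwise disjoint as $p$ varies, so no simplex is paired twice.

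For acyclicity, I would proceed by induction on $\mu(X)$. The base case $\mu(X) = 0$ reduces to $M_0 = I(X, D_0)$, which is acyclic by Theorem~\ref{thm:dmmandsubgraphs} since $D_0$ is dalmatian. For the inductive step, applying the same construction to $X_1$ with the shifted sequence $D_1, \ldots, D_{\mu(X)}$ gives, by the inductive hypothesis, that $M' := \bigsqcup_{p=1}^{\mu(X)} M_p$ is an acyclic matching on $X_1$. Suppose for contradiction that $M_0 \cup M'$ admits a directed cycle $C$ in the modified Hasse diagram of $X$. If all matched pairs of $C$ come from $M_0$, we contradict acyclicity of $M_0$; if all come from $M'$, then every simplex appearing in $C$ (matched or traversed via a boundary edge between matched simplices) lies in $X_1$, so $C$ is a cycle in the modified Hasse of $X_1$ under $M'$, contradicting the inductive hypothesis. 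The critical case is mixed: by cyclicity of $C$, there must exist consecutive matched pairs $(\sigma, \mu) \in M_0$ and $(\sigma', \mu') \in M'$ in $C$ joined by a forward boundary edge $\sigma \to \mu'$.

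The main obstacle is to rule out precisely this mixed transition, which I would do as follows. In the dalmatian colouring $D_0$, we have $\sigma = \mu \cup \{b_0\}$ for the unique black vertex $b_0$ of $\sigma$, and $\mu' = \sigma \setminus \{u\}$ for some $u \neq b_0$; since $b_0$ is the only black vertex of $\sigma$, the vertex $u$ is white. Hence $\mu'$ contains $b_0$ as its unique $D_0$-black vertex. If $\dim \mu' \geq 1$, then $\mu'$ itself is matched in $M_0$ with $\mu' \setminus \{b_0\}$, forcing $\mu' \notin X_1$ and contradicting $(\sigma', \mu') \in M'$. If instead $\dim \mu' = 0$, so that $\mu' = \{b_0\}$, then $\sigma' = \{b_0, w\}$ is an edge containing $b_0$; the dalmatian condition forces $w$ to be $D_0$-white (otherwise $\sigma' \in \overline{St(b_0)} \cap \overline{St(w)}$ would violate the disjointness of closed stars of black vertices), so $\sigma'$ is matched in $M_0$ with $\{w\}$, giving $\sigma' \notin X_1$ and again contradicting $(\sigma', \mu') \in M'$. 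The low-dimensional edge case is the subtle point of the argument, and the dalmatian hypothesis is essential in both subcases to close off the cycle's escape routes.
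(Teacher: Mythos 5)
The paper does not actually prove this statement---it defers to \cite[Prop.~2.4]{singh2020higher}, remarking only that allowing several black vertices per step changes nothing---so your attempt at a self-contained argument is welcome; your matching-property argument and the top layer of your acyclicity argument are correct. There is, however, a genuine gap in the induction. You invoke the inductive hypothesis for $X_1$ with the shifted sequence $D_1,\ldots,D_{\mu(X)}$, but $X_1$ is not a simplicial complex, and your mixed-transition argument secretly uses that the ambient object is closed under taking faces. Concretely, at the next level of the induction the analogue of your Case~1 reads: ``$\mu'$ contains the unique $D_1$-black vertex $b_1$ and $\dim\mu'\ge 1$, hence $\mu'$ is matched in $M_1$ with $\mu'\setminus\{b_1\}$, so $\mu'\notin X_2$.'' But $M_1$ is only the restriction of $I(X,D_1)$ to $X_1$, and the would-be partner $\mu'\setminus\{b_1\}$ may already have been removed at stage $0$ ($X_1$ is not face-closed, so a face of the $M_0$-critical simplex $\mu'$ need not survive); in that case $\mu'$ is $M_1$-critical, passes into $X_2$, and is free to be matched upward by a later $M_p$, so the contradiction evaporates. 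Case~2 has the same defect one level down. A telltale sign is that your proof never uses the hypothesis $D_{p+1}\subseteq V\bigl(X\setminus\bigcup_{r\le p}\bigcup_{v\in D_r}\overline{St(v)}\bigr)$, which is exactly what is needed to close the argument.

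The fix is short and avoids inducting on objects that are not face-closed. Let $q$ be the minimal index of a pair occurring in a putative directed cycle. If all pairs have index $q$, the cycle is a cycle for $I(X,D_q)$ in $\hasse$ (a restriction of an acyclic matching is acyclic), contradicting Theorem~\ref{thm:dmmandsubgraphs}. Otherwise the cycle contains a transition from a pair $(\sigma,\mu)\in M_q$, $\sigma=\mu\cup\{b_q\}$ with $b_q\in D_q$, down to $\mu'=\sigma\setminus\{u\}\ni b_q$ and then up to $\sigma'$ with $(\sigma',\mu')\in M_p$ for some $p>q$. Since $M_p\subseteq I(X,D_p)$, we have $\sigma'=\mu'\cup\{b_p\}$ with $b_p\in D_p$, so $\langle b_p,b_q\rangle$ is a face of $\sigma'\in X$ and $b_p\in V(\overline{St(b_q)})$, contradicting the condition that the vertices of $D_p$ avoid the closed stars of all earlier black vertices. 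This argument is uniform in $q$ and uses nothing about $X_q$ beyond its definition, so no induction is needed.
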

The only difference between the proofs of Proposition~\ref{prop:iterateddalmatian} and~\cite[Prop.~$2.4$]{singh2020higher} is that here we are allowing more than one vertex at each step in the iteration. Since each $M_p$ is defined by a dalmatian colouring, all steps can be performed without changes.

As shown in Figure~\ref{fig:dalmatching}, for  simplicial complexes with a small diameter, the acyclic matching determined by an elementary colouring is far from being perfect; this will actually come as good news when combined with  Theorem~\ref{thm:0dim} (see also Remark~\ref{rmk:othersubdivisions}).
\begin{figure}[ht]
\centering
\includegraphics[width=9cm]{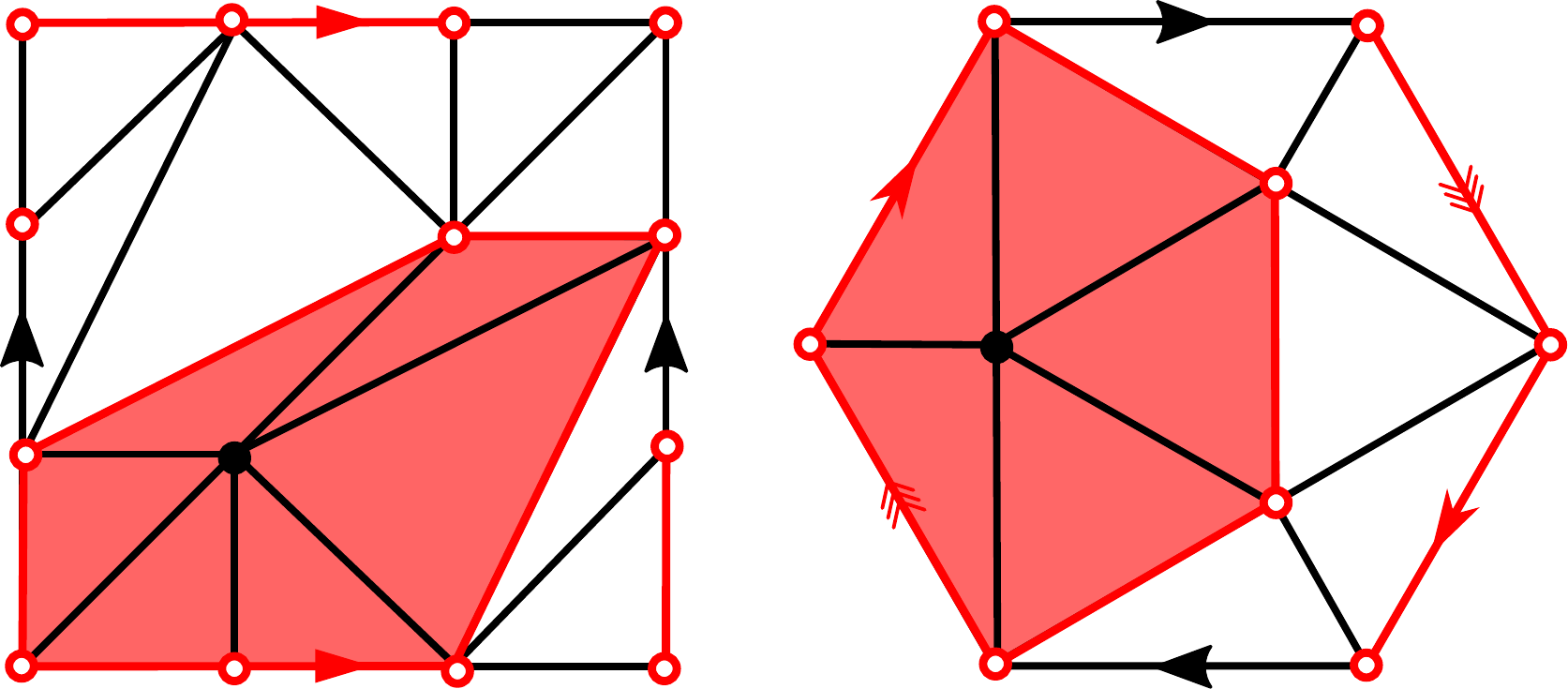}
\caption{From left to right, a minimal triangulation for the $2$-torus and for $\R\mathbb{P}^2$. In both cases any elementary colouring ``covers'' all vertices; for the specific choice of black vertices above we get $(1,9,8)$ and $(1,5,5)$ critical cells in dimensions $0,1,2$ respectively.}
\label{fig:dalmatching}
\end{figure}

\section{Bi-coloured complexes from graphs}\label{sec:graphs}

In this section we provide some examples from graph theory where $\varepsilon$-coloured simplicial complexes arise naturally, and that motivated our initial studies on the matter.

The goal here is to show that, in the specific case of plane graphs, the horizontal homology of matching complexes exhibits a rich structure that is tightly connected to the properties of the graph we started with. More precisely Theorem~\ref{thm:horizhomoftait} will show the existence of a decomposition of the horizontal homology of the matching complex of certain graphs in terms of smaller pieces determined by the combinatorics of matchings.

We will always assume that our graphs are connected and have a finite number of vertices. Given a plane graph $G$,  we can associate to it a new plane graph $\Gamma(G)$, usually called the \emph{overlaid Tait graph of $G$}. This is obtained by overlaying $G$ with its unique plane dual $G^*$, as shown in Figure~\ref{fig:planeduals}. The vertex set of $\Gamma(G)$ is naturally tri-partite:
$$V(\Gamma(G)) = V(G) \sqcup V(G^*) \sqcup (E(G) \cap E(G^*)),$$ where the last set of vertices is given by the unique intersection between each edge in $G$ and its dual in $G^*$.

Overlaid Tait graphs are well known both in graph theory (\emph{cf}.~the construction due to Kenyon, Propp and Wilson~\cite{kenyon1999trees}) and in knot theory (\emph{cf.}~\cite{kauffman2006formal}). 

In the latter subject, the plane graph $G$ is usually obtained by chequerboard colouring the regions in the complement of a projection of a link; black regions are then taken as vertices, and double points in the projection are the edges. For this reason, $G$ is usually referred to as the black graph associated to the link's projection.
As all plane graphs arise from such a construction for some link projection, we can use the two points of view interchangeably.

\begin{figure}[ht]
\centering
\includegraphics[width = 11cm]{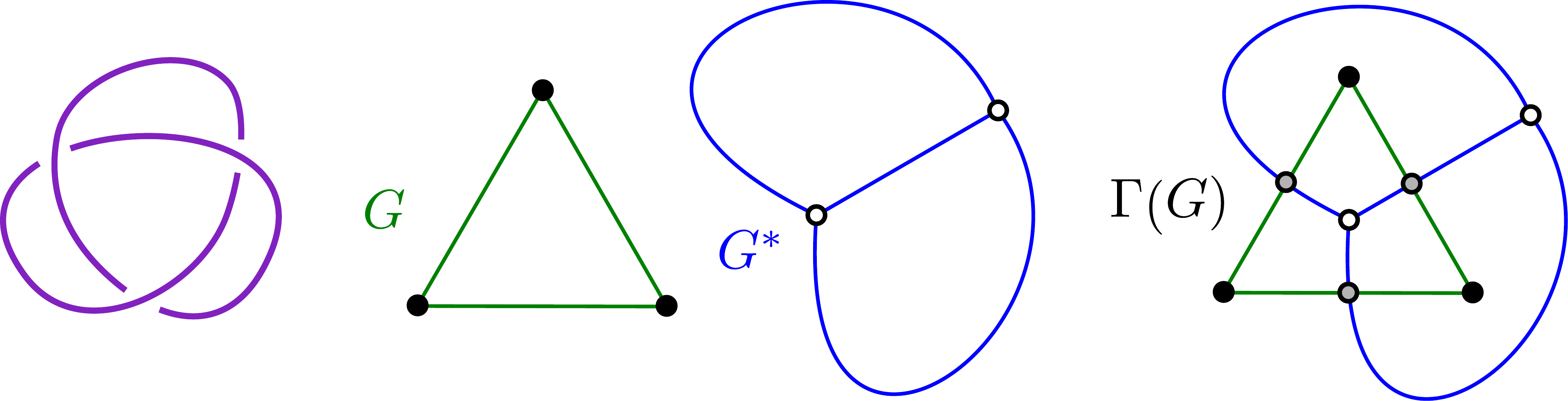}
\caption{The overlaid Tait graph for the loop $L_3$ of length three. This arises as the black graph of the minimal projection of the trefoil knot shown on the left. The grey vertices in $\Gamma(L_3)$ are given by the unique intersection between pairs of dual edges.} 
\label{fig:planeduals}
\end{figure}

Since by construction all edges in $\Gamma(G)$ have exactly one of their endpoints belonging to $V(G) \cup V(G^*)$, we  colour an edge in $\Gamma(G)$ black or white depending on whether one of its vertices belongs to $V(G)$ or $V(G^*)$ respectively. We refer to this choice of edge colours of $\Gamma(G)$ as $\varepsilon_G$.

So, for a given plane graph $G$, using this edge colouring we can construct $\left( \Mat (\Gamma(G)), \varepsilon_G \right)$ the $\varepsilon_G$-coloured  matching complex of $\Gamma(G)$. Vertices of $\Mat (\Gamma(G))$ are coloured according to the colour of the corresponding edge in $(\Gamma(G), \varepsilon_G)$. As shown in Figure~\ref{fig:planeduals}, edges in the barycentric subdivision of $G$ (which can be identified with half-edges in $G$) correspond to edges in $\Gamma(G)$ having exactly one black and one grey vertex. 
The following result tells us that  the homologies of the matching complex of the first barycentric subdivision of $G$, denoted by $B(G)$, and of $\Gamma(G)$ sit at the ``opposite sides'' of a filtered chain complex:

\begin{lem}\label{lem:filtrationtait}
Consider the $\varepsilon_G$-coloured chain complex $\left( C_*(\Mat (\Gamma(G)), \varepsilon_G), \partial \right)$, with the filtration given by $w_{\varepsilon_G}$; we have an isomorphism of chain complexes 
\begin{equation}\label{eqn:bottomcase}
\left( C_*(\Mat (\Gamma(G)),\varepsilon_G, \le0), \partial \right) \cong \left(  C_*(\Mat (B(G))), \partial \right),
\end{equation}
where on the right we have the chain complex of the simplicial homology for the matching complex of $B(G)$.

Moreover, for $k \ge |E(G)|$, then the flattening of $C_*(\Mat (\Gamma(G)), \varepsilon_G, \le k)$ is isomorphic to the chain complex whose homology is  $ \HH_*(\Mat (\Gamma(G)))$. 
In particular $$\mathcal{H}_*(\Gamma(G),\varepsilon_G, 0) \cong \HH_*(\Mat (B(G))),$$ and $$\mathcal{F}\left( \mathcal{H}_*(\Gamma(G),\varepsilon_G, |E(G)|) \right) \cong \HH_*(\Mat (\Gamma(G))).$$
\end{lem}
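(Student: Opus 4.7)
The claim splits into two essentially independent parts, corresponding to the bottom ($k=0$) and top ($k \ge |E(G)|$) ends of the weight filtration. My plan is to exhibit explicit bijections between the generators of the chain complexes in question and verify that the simplicial boundary is preserved under them.

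For the bottom case, I begin by unraveling what a $w_{\varepsilon_G}=0$ simplex of $\Mat(\Gamma(G))$ is: it is a matching on $\Gamma(G)$ all of whose edges are black, i.e., all of whose edges lie among the half-edges emanating from a vertex of $G$ and terminating at a grey intersection vertex. There is a tautological bijection between these black edges and the edges of the barycentric subdivision $B(G)$, with $V(G)$-endpoints going to the original vertices of $G$ inside $V(B(G))$ and grey intersection endpoints going to the subdivision vertices. This bijection preserves adjacencies in the two host graphs, so matchings correspond to matchings, and a $k$-simplex of $\Mat(\Gamma(G))$ of weight $0$ is sent bijectively to a $k$-simplex of $\Mat(B(G))$. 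The simplicial boundary in either matching complex acts by deleting one of the constituent edges in all possible ways; the bijection commutes with this operation, giving the chain complex isomorphism~\eqref{eqn:bottomcase}. Taking homology yields $\mathcal{H}_*(\Gamma(G),\varepsilon_G,0) \cong \HH_*(\Mat(B(G)))$.

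For the top case, the only substantive point to verify is the numerical bound $w_{\varepsilon_G}(\sigma) \le |E(G)|$ for every simplex $\sigma$ of $\Mat(\Gamma(G))$. The key local observation is that at each of the $|E(G)|$ grey vertices of $\Gamma(G)$ there are exactly two incident white edges --- the two half-edges of the $G^*$-edge meeting at that vertex --- and these two edges share an endpoint, so a matching can contain at most one of them. Summing over the grey vertices, the number of white vertices in any simplex of $\Mat(\Gamma(G))$ is at most $|E(G)|$. Consequently, for $k \ge |E(G)|$ we have $C_*(\Mat(\Gamma(G)),\varepsilon_G,\le k) = C_*(\Mat(\Gamma(G)))$ as filtered complexes with the full simplicial differential $\partial$, so forgetting the $w_{\varepsilon_G}$-grading via the flattening recovers the standard simplicial chain complex of $\Mat(\Gamma(G))$. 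Passing to homology then yields $\mathcal{F}(\mathcal{H}_*(\Gamma(G),\varepsilon_G,|E(G)|)) \cong \HH_*(\Mat(\Gamma(G)))$.

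The genuine work of the proof lies in setting up the two bijections and their adjacency-preservation properties cleanly from the definitions of $\Gamma(G)$, $B(G)$, and the edge colouring $\varepsilon_G$; the chain-complex and homology-level assertions then follow formally. I do not anticipate a substantive obstacle here --- both halves of the lemma are essentially a careful unpacking of the definitions, with the only mildly non-trivial input being the local count of white edges at each grey vertex.
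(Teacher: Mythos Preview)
Your proposal is correct and follows essentially the same approach as the paper: the $k=0$ case is handled via the tautological identification of black edges in $\Gamma(G)$ with edges of $B(G)$, and the top case via the bound $w_{\varepsilon_G}\le |E(G)|$ coming from the bipartite structure at the grey vertices. The only cosmetic difference is that the paper bounds the total size of a matching (invoking bipartiteness and the Euler relation $|V(G)|+|V(G^*)|=|E(G)|+2$) rather than just the white-edge count, but this is the same underlying observation.
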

\begin{proof}

Let us start with the case where $k = 0$; elements in $C_*(\Mat (\Gamma(G)),\varepsilon_G, \le 0)$ are formal sums of matchings in $\Gamma(G)$ involving only black edges. But since by construction black edges in $\Gamma(G)$ are the edges in $B(G)$ (\emph{i.e.}~the half-edges in $G$), we get a bijection between the generators of the two complexes in Equation~\eqref{eqn:bottomcase}. The differentials on both sides also agree:  all simplices generating the complex on the left-hand side of Equation~\eqref{eqn:bottomcase} are coloured in black, hence in filtration degree $0$ the simplicial differential reduces to the horizontal one. 

The isomorphism between $\mathcal{F}\left(\mathcal{H}_*(\Mat(\Gamma(G)),\varepsilon_G, |E(G)|)\right) \cong \HH_*(\Mat (\Gamma(G)))$ follows from the definition, after noting that if $k \ge |E(G)|$ we have an equality of complexes $C_*(\Mat(\Gamma(G)), \varepsilon_G, \le k) = C_*(\Mat(\Gamma(G)))$. This is because $dim(\Mat(\Gamma(G))) \le |E(G)|$, which can be easily seen by noting that $\Gamma(G)$ is a bipartite graph, with respect to the decomposition   $V(\Gamma(G)) = (V(G) \sqcup V(G^*)) \sqcup (E(G) \cap E(G^*))$.  Furthermore, using simple Euler characteristic considerations we have $|V(G)| + |V(G^*)| = |E(G)|+2$, and the number of edges in a matching on $\Gamma(G)$ is less or equal to $|E(G)|$.
\end{proof}

While it is not generally easy to compute the homology groups of $C_* (\Gamma(G), \varepsilon_G, \le k)$ interpolating between $\HH_*(\Mat (B(G)))$ and $\HH_*(\Mat (\Gamma(G)))$, it is possible to explicitly determine the homology of the associated graded object, or in other words, the horizontal homology of $\Mat (\Gamma(G))$, in terms of the homology of simpler chain complexes. For a matching $m \in Match(B(G^*))$, define $G(m)$ as the subgraph of $G$ obtained by removing all edges with duals whose barycentres are matched by $m$. One example is displayed in Figure~\ref{fig:complementarygraph}.

The content of the next theorem is that, in order to compute $\HH^h_*(\Mat (\Gamma(G)), \varepsilon_G, k)$, we can compute the simplicial homology of the matching complex of certain subgraphs of $B(G)$; to put it another way, the horizontal homology of the coloured complex $(\Mat (\Gamma(G)), \varepsilon_G)$ contains the information about all matching complexes of the barycentric subdivision of $G$. This result is an analogue of \cite[Cor.~13]{celoria2020filtered}, where a different filtration on $\Mat(\Gamma(G))$ was considered.
Note that the colouring $\varepsilon_G$ is not dalmatian, hence the next result does not overlap with Proposition~\ref{prop:dalmatianhor}. Recall that $\F_{(a,b)}$ denotes a copy of the field $\F$ generated by simplices of dimension $a$ and $\varepsilon$-degree $b$.

\begin{thm}\label{thm:horizhomoftait}
Consider the horizontal homology of the matching complex of $\Gamma(G)$, with the colouring $\varepsilon_G$ inherited from $G$. Then there is a graded isomorphism

\begin{equation}\label{eqn:horizgraphs}
\HH^h_*(\Mat (\Gamma(G)), \varepsilon_G, k) \cong 
\bigoplus_{\substack{m \in Match(B(G^*)) \\ G(m) \neq \emptyset\\|m| = k}} \widetilde{\HH}(\Mat (B(G(m))))  \bigoplus_{\substack{m \in Match(B(G^*)) \\ G(m) = \emptyset\\|m| = k}} \F_{(k-1,k)}
\end{equation}

for $k >0$. When $k=0$ instead  $$\HH^h_*(\Mat (\Gamma(G)), \varepsilon_G, 0) \cong \HH_*(\Mat (B(G))).$$
\end{thm}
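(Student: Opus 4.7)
My plan is to decompose the chain complex $\left(C_*(\Mat(\Gamma(G)), \varepsilon_G, k), \partial_h\right)$ as a direct sum indexed by matchings $m \in Match(B(G^*))$ with $|m| = k$, and then to identify each summand with a dimension shift of the reduced chain complex of $\Mat(B(G(m)))$. The starting observation is that the white edges of $\Gamma(G)$ are precisely the edges of $B(G^*)$, while the black edges are the edges of $B(G)$.

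Given any matching $M \in \Mat(\Gamma(G))$ with $w_{\varepsilon_G}(M) = k$, I would split $M = m \sqcup m'$ into its white and black edges; then $m$ is automatically a matching of $B(G^*)$ with $|m| = k$, and the compatibility condition that no midpoint vertex of $\Gamma(G)$ be used twice translates, by unpacking the definition of $G(m)$, to $m'$ being a matching on $B(G(m))$ (since the midpoints occupied by $m$ correspond to exactly the edges of $G$ deleted to form $G(m)$). Because the horizontal differential preserves the weight it can only remove black edges, hence preserves the white sub-matching $m$. This yields the direct sum decomposition of chain complexes
\begin{equation*}
\left(C_*(\Mat(\Gamma(G)), \varepsilon_G, k), \partial_h\right) \cong \bigoplus_{\substack{m \in Match(B(G^*)) \\ |m| = k}} C_*^{(m)},
\end{equation*}
where $C_*^{(m)}$ is spanned by $m$ itself (in simplicial dimension $k-1$) together with all $m \cup m'$ for $m' \in \Mat(B(G(m)))$ (in dimension $k + dim(m')$).

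Under the identification $m \cup m' \leftrightarrow m'$ and $m \leftrightarrow \star$, the subcomplex $C_*^{(m)}$ is isomorphic to the reduced chain complex $\widetilde{C}_*(\Mat(B(G(m))))$ shifted in dimension by $k$: indeed $\partial_h(m \cup \{e\}) = m$ mirrors the reduced augmentation $\partial \langle v \rangle = \star$, while $\partial_h(m) = 0$ matches $\partial \star = 0$. Hence when $G(m) \neq \emptyset$ the homology of $C_*^{(m)}$ contributes $\widetilde{\HH}_{*-k}(\Mat(B(G(m))))$, and when $G(m) = \emptyset$ the subcomplex collapses to the single generator $m$ in bi-degree $(k-1,k)$, contributing $\F_{(k-1,k)}$; summing over $m$ gives the first formula for $k > 0$. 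The case $k = 0$ forces $m = \emptyset$ and $G(\emptyset) = G$; here the ``cone point'' $\star$ is absent because the empty matching is excluded from $\mathcal{P}(\Mat(\Gamma(G)))$ by the paper's convention, so the sum degenerates to the unreduced complex $C_*(\Mat(B(G)))$, matching the second displayed isomorphism (and consistent with Lemma~\ref{lem:filtrationtait}). The main obstacle is keeping the dimension shift and the switch between reduced and unreduced homology in the base case transparent while verifying that the identification $m \leftrightarrow \star$ reproduces the augmentation correctly in every summand.
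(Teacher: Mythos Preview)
Your proposal is correct and follows essentially the same approach as the paper's own proof: both decompose the weight-$k$ horizontal chain complex as a direct sum over white sub-matchings $m \in Match(B(G^*))$ with $|m|=k$, identify each summand with a shifted copy of the reduced simplicial chain complex of $\Mat(B(G(m)))$ via $m\cup m'' \mapsto m''$ and $m \mapsto \star$, and treat the cases $G(m)=\emptyset$ and $k=0$ separately. Your write-up is in fact slightly more explicit than the paper's on two points---the precise dimension shift by $k$, and why the $k=0$ summand yields unreduced rather than reduced homology---so nothing substantive is missing.
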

\begin{proof}
We start by noting that the case with $k = 0$ has already been covered in Lemma~\ref{lem:filtrationtait}:  the only differentials in the complex   $C_*(\Gamma(G),\varepsilon_G, \le 0)$ are horizontal, hence its homology coincides with $\HH^h_*(\Mat (\Gamma(G)), \varepsilon_G, 0)$ in this filtration degree.

Now consider a non-trivial matching $m \in Match(B(G^*))$ such that $|m| = w_{\varepsilon_G}(m) = k$;  then, as shown in Figure~\ref{fig:complementarygraph}, if a half-edge in $G^*$ belongs to $m$, the two half-edges composing the dual edge in $G$ cannot be part of any matching of $\Gamma(G)$. Hence if we want to extend $m$ to a matching $m^\prime \in Match(\Gamma(G))$ such that $w_{\varepsilon_G}(m^\prime) = k$ we can only use half edges of $B(G(m))$. 

In particular, if $G(m) = \emptyset$, the matching $m$ on $B(G^*)$, seen as a simplex in $\Mat (B(G^*))$ has only white vertices; further, it cannot be the horizontal boundary of a larger simplex, since this would contradict the assumption $G(m) = \emptyset$. 
Therefore, matchings $m$ such that $G(m) = \emptyset$, give  direct summands in $H_*^h (\Mat (\Gamma(G)), \varepsilon_G, k)$. Each of these summands corresponds to a copy of $\F$ in the rightmost summand in Equation~\eqref{eqn:horizgraphs}.
\begin{figure}[ht]
\centering
\includegraphics[width = 8cm]{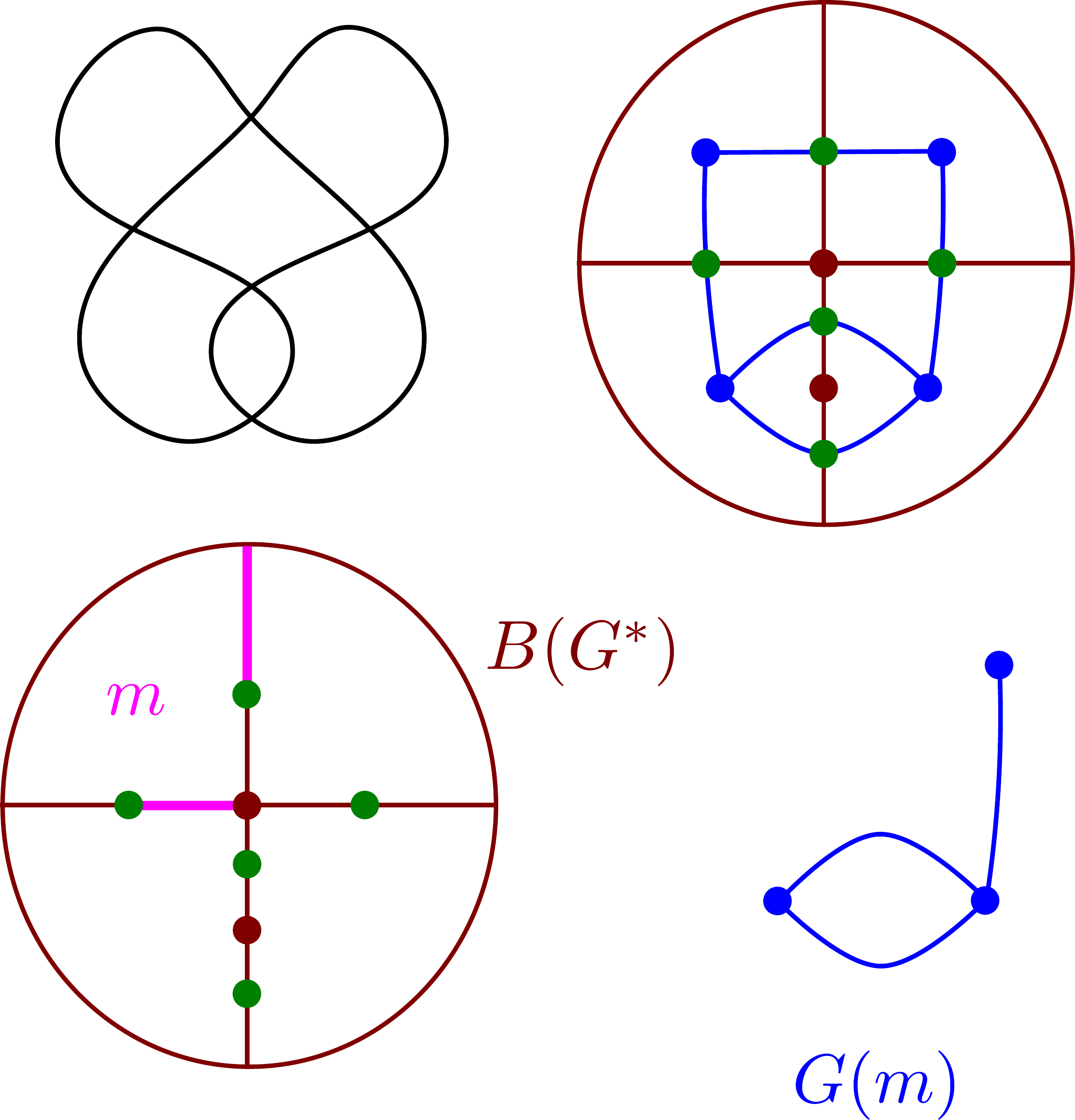}
\caption{On the top, a projection of the $5_2$ knot, followed by its overlaid Tait graph. Vertices coming from the chequerboard colouring are blue\slash red for vertices in $G$ and $G^*$ respectively, and crossings are green. The vertex in $G^*$ corresponding to the external region has been placed ``at $\infty$'' in $S^2$. In the lower part, a pink matching $m$ on the first barycentric subdivision of $G^*$ (seen as a subgraph of $\Gamma(G)$), and the complementary subgraph $G(m) \subseteq G$. Any half-edge matched in pink in $B(G^*)$ precludes the chance of matching its barycentre in $B(G)$.}
\label{fig:complementarygraph}
\end{figure}
Let us now conclude by considering the case where $m$ is such that $k >0$ and $G(m) \neq \emptyset$; if we regard $m$ as being a sub-matching of $m^\prime \in Match(\Gamma(G))$, then the associated simplex $\sigma_{m^\prime}$ in $\Mat (\Gamma(G))$ will have $w(\sigma_{m^\prime}) = k$ and $dim(\sigma_{m^\prime}) = |m^\prime| - 1$. 

Now, it follows from the definition of $\partial_h$ that the collection of the simplices of the form $\sigma_{m^\prime}$ for $m^\prime \supset m$ is a sub chain complex $C(m)$ of $C (\Mat (\Gamma(G)), \varepsilon_G,k)$. 

The homology of $\widetilde{\HH}_*(\Mat (B(G(m))))$  can be then identified with the homology of $C(m)$; at the level of chains, we obtain a bi-graded bijection by sending $m^\prime = \langle m, m''\rangle \in C(m)$ (with $m'' \in Match(B(G(m)))$ and $|m''|>0$) to $m''$, and $\langle m\rangle$ to the terminal object in the reduced complex for $\Mat (B(G(m)))$.\\
Therefore we obtain the reduced homology as required, since under this identification the differentials in the two complexes correspond bijectively.
\end{proof}

Of course, it would be very interesting to further relate properties of $\HH^h(\Mat(\Gamma(G)))$ and $G$ (and equivalently to link projections).

\section{Horizontal homology and graph dissimilarity}\label{sec:graphdiss}

Here we use the horizontal homology to define a \emph{dissimilarity} measure between graphs. A dissimilarity measure is a function that, given two graphs as input, outputs a non-negative number that somehow quantifies ``how much'' the graphs are different. Extremely simple examples include the absolute value of the difference between the number of vertices or edges, connectedness, and the number of simple cycles. 
There are also several existing sophisticated dissimilarity measures (see \emph{e.g.}~\cite{schieber2017quantification}, \cite{chartrand1998graph} and \cite{li2018efficient})  ranging from topological and combinatorial to probabilistic. 

In this section we use certain collections of  horizontal homologies of graphs to define a dissimilarity measure $\Delta$, and compute it on several examples. We  conjecture that $\Delta$ is optimal, meaning that it vanishes only for isomorphic graphs (see also Section~\ref{sec:computations} for a related theme).

This is somewhat surprising, as this homology theory arises as a mild extension of simplicial homology, which can only discern a graph's homotopy type. But since, as we'll see in a moment, $\HH^h$ simultaneously encodes both combinatorial and topological features, it can be effectively used to distinguish non-isomorphic graphs.
\begin{conj}\label{conj:dissimilarity}
The dissimilarity is an optimal measure on graphs, that is $\Delta(G_1,G_2) = 0$ if and only if $G_1 \cong G_2$.
\end{conj}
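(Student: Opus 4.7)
The easy direction is formal: a graph isomorphism induces, after relabelling vertices, a bi-graded isomorphism on horizontal homologies, so each multiset $\Theta(G,j)$ is an isomorphism invariant and $G_1 \cong G_2$ implies $\Delta(G_1,G_2)=0$. The content of the conjecture is the converse, equivalently that the collection $\{\Theta(G,j)\}_{j=0}^m$ is a complete isomorphism invariant. My plan is to try to reconstruct $G$ from this data by an inductive decoding in the parameter $j$.

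For small $j$ the decoding is explicit. When $j=0$ the horizontal differential is trivial by~\eqref{eqn:zeroepsilon}, so $\Theta(G,0)$ records the $f$-vector and in particular $|V(G)|$ and $|E(G)|$. When $j=1$ every elementary colouring is dalmatian, so Proposition~\ref{prop:dalmatianhor} applies and gives, for the single black vertex $v$, rank one in bi-degree $(0,0)$, rank $|V(G)|-1-\deg(v)$ in bi-degree $(0,1)$, and rank $|E(G)|-\deg(v)$ in bi-degree $(1,2)$; hence $\Theta(G,1)$ already recovers the full degree sequence of $G$.

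For $j \ge 2$ I would split colourings according to whether they are dalmatian. For a two-element black set $\{u,v\}$, dalmatian is equivalent to $d_G(u,v)>2$, and Proposition~\ref{prop:dalmatianhor} again gives explicit ranks as an elementary function of $\deg(u)$, $\deg(v)$, and the edges avoiding $\{u,v\}$. Non-dalmatian colourings would be attacked via the elementary-matching decomposition of Theorem~\ref{thm:decomposition}, which writes $I(X,\varepsilon)$ as a disjoint union of the $I(X,\varepsilon_i)$; comparing the resulting bi-graded ranks against the dalmatian predictions should isolate the precise local adjacency pattern of $\{u,v\}$, such as whether they are adjacent, share common neighbours, or lie in a common triangle. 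Iterating over growing black sets should, in principle, recover multisets of induced subgraphs of increasing size, while $j=m$ returns the homotopy type by~\eqref{eqn:oneepsilon}.

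The main obstacle, and the reason the statement remains conjectural, is proving completeness of this reconstruction. Each $\Theta(G,j)$ records only a multiset of numerical invariants, and the collective information is strongly reminiscent of the vertex-deleted deck in the Ulam--Kelly reconstruction conjecture. A proof along these lines would likely require either a Kelly-style counting lemma expressing $\Theta(G,j)$ as a function of the deck -- for which the decomposition of Theorem~\ref{thm:decomposition} is well suited, since elementary colourings naturally refer to single-vertex data -- or a separate classification argument handling small or exceptional configurations. In the absence of such a reduction, the conjecture appears to be at least as hard as vertex reconstruction itself, which is consistent with the fact that the exhaustive computations referenced in the introduction verify $\Delta>0$ precisely in the range where reconstruction is also known to hold.
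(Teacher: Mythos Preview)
The statement under review is a \emph{conjecture} in the paper; there is no proof to compare against. The paper states Conjecture~\ref{conj:dissimilarity}, notes its equivalence with Conjecture~\ref{conj:completegraph} (that the $\Theta(G,j)$ form a complete graph invariant), and supports it only with computational evidence (Tables~\ref{tab:datatrees} and~\ref{tab:data}) and the structural observations in Section~\ref{sec:graphdiss}.

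Your proposal is not a proof either, and to your credit you say so explicitly. The easy direction you give is correct and is all that can be said rigorously. Your analysis of $\Theta(G,j)$ for $j=0,1$ is accurate and matches the paper's own discussion preceding Example~\ref{ex:duegrafi}; your invocation of Proposition~\ref{prop:dalmatianhor} for elementary colourings is valid and the ranks you compute are right. For $j=2$ you propose to separate dalmatian from non-dalmatian colourings and extract local adjacency data; this is exactly the heuristic the paper pursues around Figure~\ref{fig:complexesgraphs} and Lemma~\ref{lem:boundsondelta}, and it is known to recover adjacency, common neighbours, girth, vertex-cover number, and so on.

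The genuine gap you identify is the right one: $\Theta(G,j)$ is a multiset, so passing from the labelled data $\widehat{\Theta}(G,2)$ (which the paper observes does reconstruct $G$ trivially) to the unlabelled $\Theta(G,j)$ loses exactly the kind of anchoring that makes reconstruction hard. Your analogy with the Ulam--Kelly reconstruction conjecture is apt and is not made in the paper; it is a reasonable heuristic for why the conjecture resists the inductive-decoding approach. In short, your proposal is a fair summary of what is known and why a proof is out of reach, not a proof itself---which is consistent with the paper's own position.
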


We will see later on that this conjecture is equivalent to asking that a related invariant is a complete graph invariant (Conjecture~\ref{conj:completegraph}). We will also discuss computational aspects and the evidence that  supports it.

Moreover, it turns out that this dissimilarity measure can be high even for connected regular graphs and graphs with the same Laplacian or adjacency spectrum. In what follows we'll restrict to simple and connected graphs.

Given three connected graphs with the same number of vertices, it is not hard to prove (see Proposition~\ref{prop:triangleineq}) that $\Delta$ satisfies the triangular inequality, hence in particular $\Delta$ is a \emph{pseudometric} on the space of graphs. Therefore, the previous conjecture is equivalent to proving that $\Delta$ is in fact a metric on the space of graphs.

\begin{rmk}
Before proceeding, we must note that all the theory developed in this section can be applied just as well to general simplicial complexes, rather than only graphs. We will further examine this more general dissimilarity measure for simplicial complexes in a further paper.
\end{rmk}

We can now start to describe in more detail the structure of the horizontal homology for graphs, and the information that is encoded in it.

Let us begin by noting that the horizontal homology of a $\varepsilon$-coloured simple graph $(G, \varepsilon)$ has at most filtration degree $2$. Moreover the horizontal complexes and homologies admit a simple description; in filtration degree $2$ we see that complex and homology coincide, and are generated by all the edges in $G$ whose endpoints are both white. 

In filtration degree $1$ the complex is generated by edges with one endpoint of each colour and white vertices. In this case, the homology is 
\begin{equation}\label{eqn:horgraph}
\HH_*^h(G, \varepsilon, 1) \cong \bigoplus_{v \in V_{ww}} \F_{(0,1)}  \bigoplus_{v \in V_{wb}} \F^{bdeg(v)-1}_{(1,1)}.
\end{equation}
Here $V_{ww}$ and $V_{wb}$ denote the white vertices in $(G, \varepsilon)$ such that all of their neighbours are respectively either all white or not, and $bdeg(v)$ is the number of black vertices at distance one from a given white vertex in $V_{wb}$. We adopt the convention that $\F^0 = 0$.
The leftmost direct summand in Equation~\eqref{eqn:horgraph} is generated by all white vertices that are not horizontal boundaries of any $1$-simplex, which can be seen to coincide with the elements of $V_{ww}$. The rightmost summand instead is generated by $1$-simplices with exactly one white vertex; for each element $v \in V_{wb}$, there are exactly $bdeg(v)$ such edges. Each edge has $v$ as its only horizontal boundary, hence in the complex their formal sum gives $\F^{bdeg(v)-1}_{(1,1)}$.

Finally, in filtration degree $0$ the horizontal homology is isomorphic to the simplicial homology of the black subgraph $Bl(G,\varepsilon)$, \emph{i.e.}~the possibly disconnected subgraph of $G$ composed only by black vertices and the edges between them. \\

It is immediate to note that Equation~\eqref{eqn:zeroepsilon} implies that the rank of $\HH^h(G, (0, \ldots,0))$ in bi-degree $(0,1)$ is $m$, the number of vertices, while in bi-degree $(1,2)$ it coincides with the number of edges in $G$ (hence we recover the Euler characteristic of $G$). \\
If instead $\varepsilon = \varepsilon_i$ for some $i = 0,\ldots, m$, then $\mathrm{rk}(\HH^h(G, \varepsilon_i,2))$ is the number of edges not incident to $v_i$; in particular, the difference with the rank for the all-white colouring in the same bi-degree is the degree of $v_i$.
Thus the degree sequence of $G$ can be extracted from the horizontal homologies with one black vertex. 

As a final example, let us analyse what can we infer on $G$ from the computation of $\HH^h(G, \varepsilon)$ for $|\varepsilon| = 2$. Call $\varepsilon_{ij}$ the colouring with only $v_i$ and $v_j$ black, as in Figure~\ref{fig:complexesgraphs}.
Firstly, observe that we can detect if the two black vertices are adjacent in $G$ or not; more precisely $\mathrm{rk}(\HH^h(G, \varepsilon_{ij},0)) = 1$ if and only if $\langle v_i,v_j\rangle \in E(G)$. 

\begin{figure}
\centering
\includegraphics[width=14cm]{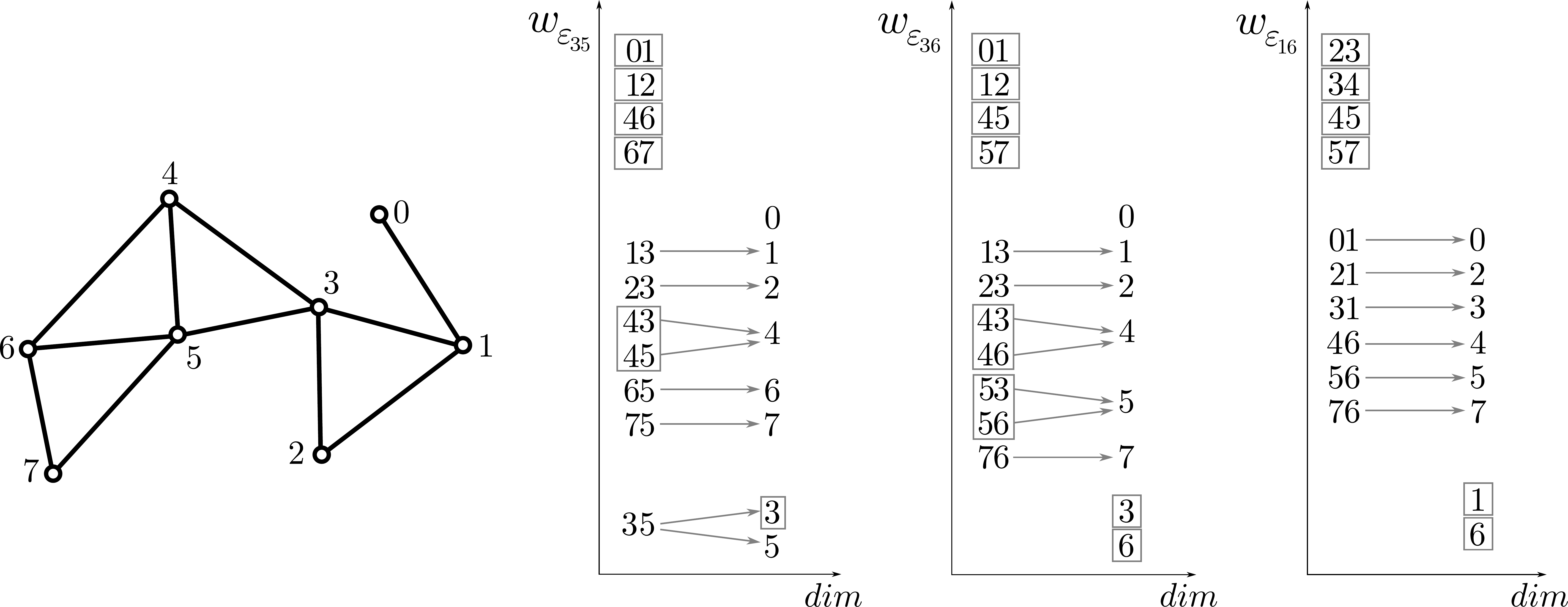}
\caption{From left to right: the graph $G$ considered, and its horizontal complexes for $\varepsilon_{35}, \varepsilon_{36}$ and $\varepsilon_{16}$ respectively. Highlighted elements denote non-trivial generators in homology; when two generators are in the same box, their sum is non-trivial in homology. The ranks of the homology in the lowest filtration degree can tell whether the two vertices are adjacent, while the rank in degree $1$ gives the number of vertices at distance one from the two black ones.}
\label{fig:complexesgraphs}
\end{figure}

As shown in Figure~\ref{fig:complexesgraphs}, for adjacent vertices, the ranks in filtration degrees two and one equal $|E(G)| - |E(St(v_i))| - |E(St(v_j))| $ and the number of vertices at distance one from both $v_i$ and $v_j$ respectively.
The same holds for a pair of vertices in $G$ that are at distance $2$. However in this case we have $\mathrm{rk}(\HH^h(G, \varepsilon_{ij},0)) = 2$. 

If instead $v_i$ and $v_j$ are at distance greater than $2$ (in particular implying that $\varepsilon_{ij}$ is dalmatian) the homology in filtration degree one is trivial, and has again rank equal to $|E(G)| - |E(St(v_i))|- |E(St(v_j))|$ in filtration degree $2$.\\

We are ready to define the first invariant of graphs obtained by collecting the ranks of specific horizontal homologies, which will then lead us to the definition of the dissimilarity measure $\Delta$ mentioned in the introduction.

\begin{defi}
For a connected and simple $\varepsilon$-coloured graph $(G, \varepsilon)$, we can encode the ranks of its horizontal homology groups with an ordered collection of $4$-tuples; more precisely call $$\widehat{\Theta}(G,j) = \left( \left( \varepsilon, i, k,r\right)_{|\varepsilon| = j} \right)\, \text{ if }\,\mathrm{rk}(\HH^h_i(G,\varepsilon, k)) = r>0.$$ Similarly, call  $\Theta (G,j)$ the ordered collection of $4$-tuples $\left(\left( j, i, k,r\right)_{|\varepsilon| = j} \right)$.
In the latter case the tuples are ordered with respect to the left lexicographic ordering, so $(a_1,a_2,a_3,a_4) > (b_1,b_2,b_3,b_4)$ if $a_p > b_p$ and $a_q = b_q$ for all $q<p$. 
\end{defi}
Note that we are only keeping track of bi-gradings with non-zero ranks; likewise, the notation $(j,i,k,*)$ will mean that $*$ is some positive integer. For $\Theta(G,j)$ we group together all the $4$-tuples of the form $(j,i,k,*)$. See Example~\ref{ex:duegrafi} for an explicit computation of $\Theta(G,2)$. 

As we'll see in the rest of this section, the sequence $\Theta (G,j)$ for increasing $j$ can be thought of as providing approximated 
information on $G$, ranging from local to global. A similar yet unrelated theme will appear in Section~\ref{sec:uberhomology}.\\

\begin{rmk}
Even if a complete computation of all the possible horizontal homologies of the $\varepsilon$-colourings of $G$ becomes rapidly unwieldy --as there are $2^m$ colourings to be considered-- we will see in what follows that the type of information contained in small portions of these invariants is quite subtle, and is on average quite effective in distinguishing between non-isomorphic graphs (see Table~\ref{tab:data} and the discussion at the end of this section).

The computational cost of determining these horizontal groups with few black dots is polynomial in the number of vertices. 
There are also some tricks that can be used to speed up computations; a reasonably efficient --but far from optimal-- program computing $\Theta(G,j)$ and $\Delta$ is available at \cite{miogithub}. 
This program is not meant in any way to be a practical tool for the graph isomorphism problem, recently improved by Babai \cite{babai2016graph} (see the review paper \cite{grohe2020graph} and references therein). Rather, in the light of Conjecture~\ref{conj:dissimilarity}, it would be interesting to explore  the classification extent of $\Delta$, and the kind of information encoded in the horizontal homology for intermediate values of $|\varepsilon|$.
\end{rmk}

\begin{exa}\label{ex:duegrafi}
Here we can see an easy example of how the horizontal homology can be used to distinguish between two graphs; consider the two graphs shown in Figure~\ref{fig:twographs}; these have the same number of edges and are both $3$-regular, hence in particular have the same degree sequence. Therefore their horizontal homologies coincide for $|\varepsilon| = 0,1$ (and consequently $\Theta(G_1,j) = \Theta(G_2,j)$ for $j = 0,1$).

\begin{figure}[ht]
\centering
\includegraphics[width = 8cm]{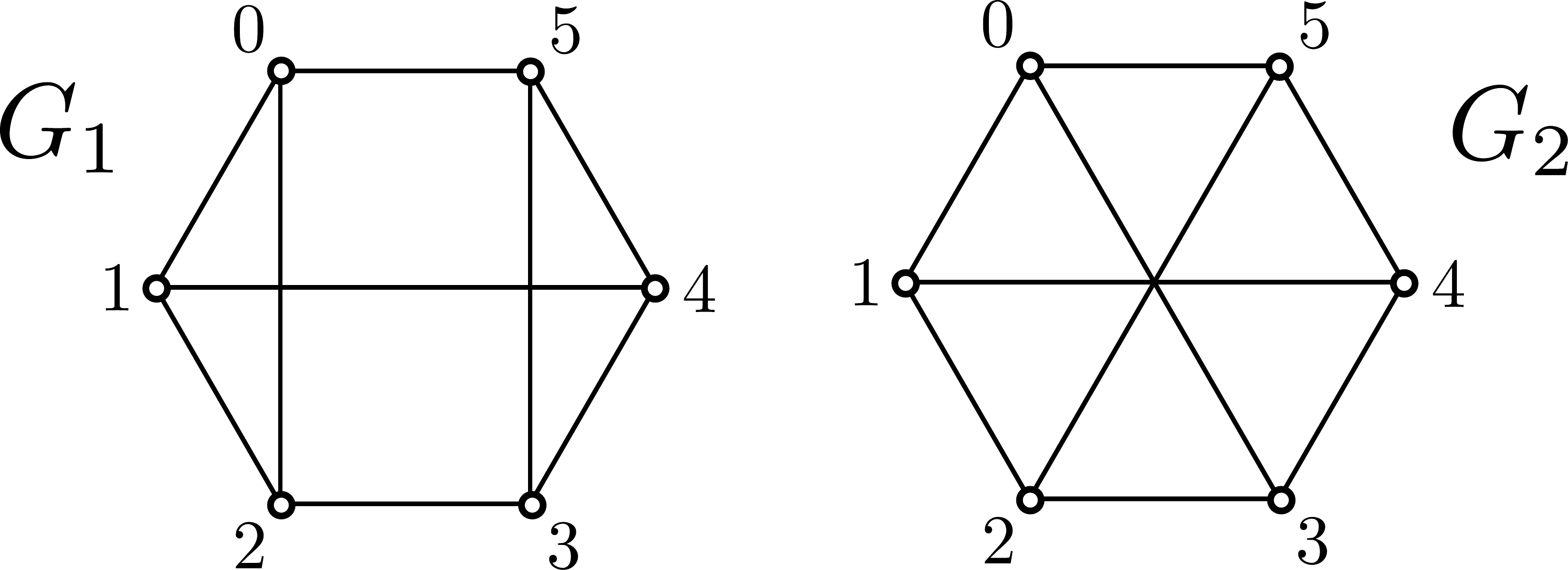}
\caption{Two graphs distinguished by their horizontal homologies with only $2$ black vertices.}
\label{fig:twographs}
\end{figure}

It is not hard to compute the groups $\HH^h(G_i, \varepsilon)$ for $i=0,1$ by hand or using the program in \cite{miogithub}
; it is in fact enough to restrict the computation to those colourings $\varepsilon \in \Z_2^6 $ with exactly two black vertices. We'll see in what follows that this implies that $\Delta(G_1,G_2) = \frac{2}{3}$. In the computations below, the notation $\HH^h(G, |\varepsilon|=2)$ denotes  the horizontal homology for all colours with two black vertices. The colour written on the right indicates a colour realising the corresponding group.

\begin{equation*}
\HH^h(G_1, |\varepsilon| = 2) = \begin{cases}
\F_{(1,2)}^4 \oplus \F_{(1,1)} \oplus \F_{(0,1)} \oplus \F_{(0,0)} & \text{ in $6$ cases  (\emph{e.g.}~} \varepsilon = (0,0,0,1,0,1))\\
\F_{(1,2)}^3 \oplus \F_{(1,0)}^2 \oplus \F_{(0,0)}^2  & \text{ in $6$ cases (\emph{e.g.}~} \varepsilon = (0,0,1,0,1,0))\\
\F_{(1,2)}^4 \oplus \F_{(0,0)}  & \text{ in $3$ cases (\emph{e.g.}~} \varepsilon = (0,0,1,1,0,0))
\end{cases}
\end{equation*}
\begin{equation*}
\HH^h(G_2, |\varepsilon| = 2) = \begin{cases}
\F_{(1,2)}^4 \oplus \F_{(0,0)} & \text{ in $9$ cases (\emph{e.g.}~} \varepsilon = (0,0,0,0,1,1))\\
\F_{(1,2)}^3 \oplus \F_{(1,1)}^3 \oplus \F_{(0,1)}\oplus \F_{(0,0)}^2  & \text{ in $6$ cases (\emph{e.g.}~} \varepsilon = (0,0,0,1,0,1))
\end{cases}
\end{equation*}

In particular 
\begin{equation*}
\Theta(G_1,2) = \left( (2,1,2,54),(2,1,1,6),(2,1,0,12),(2,0,1,6),(2,0,0,4) \right),    
\end{equation*}
\begin{equation*}
\Theta(G_2,2) = \left( (2,1,2,54),(2,1,1,18),(2,0,1,6),(2,0,0,21) \right).    
\end{equation*}
\end{exa}

\begin{rmk}
The invariant $\widehat{\Theta}(G)$ clearly contains a greater amount of information on $G$ than $\Theta(G)$. Indeed from the values of the invariant  $\widehat{\Theta}(G)$ we can uniquely reconstruct $G$, and in fact it is sufficient to only consider $\widehat{\Theta}(G,2)$; this is the collection of bi-degrees and ranks of the horizontal homologies of $G$ indexed by all $\varepsilon$ containing exactly two black vertices.

To see why, note that it is easy to recover $|V(G)|$ by looking at the length on any colour $\varepsilon$. We can then construct a graph $G^\prime$ whose vertices are the elementary colours $\varepsilon_p$ for $p = 1, \ldots, |V(G)|$. There is an edge in  $G^\prime$ between the vertices $\varepsilon_q$ and $\varepsilon_t$ whenever $\mathrm{rk} (\HH^h_0 (G, \varepsilon_{qt} ,0)) = 1$ (see the left part of Figure~\ref{fig:complexesgraphs}) where $\varepsilon_{qt} = \varepsilon_q + \varepsilon_t$; equivalently, we have an edge for any entry of the form $(\varepsilon_{qt}, 0,0,1) \in \widehat{\Theta}(G,2)$). Again, this follows from the fact that the horizontal homology of such a coloured graph, in bi-degree $(0,0)$ gives the number of connected components of the black subgraph, which is $1$ if and only if the two vertices are adjacent.
\end{rmk}
So the information contained in $\widehat{\Theta}(G,2)$ is more than enough to reconstruct $G$. This is not necessarily good news, since it implies that it cannot be used efficiently to distinguish graphs; in other words, relabelling the vertices of a graph would yield two different $\widehat{\Theta}$ invariants.

The same is not true for $\Theta(G)$. Indeed, in this case, since all the tuples are ordered (and we do not have the direct dependency of each tuple from the colouring), we are creating a ``canonical'' invariant, \emph{i.e.} not dependent on the labels of the vertices.

Several results and computations suggest the following restatement of Conjecture~\ref{conj:dissimilarity}.
\begin{conj}\label{conj:completegraph}
The collection $\Theta(G,j)$ for $j = 1, \ldots, |V(X)|$ is a complete graph invariant.
\end{conj}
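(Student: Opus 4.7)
My approach would be to reconstruct, from $\{\Theta(G,j)\}_{j=0,\ldots,m}$, the full multiset of induced subgraphs of $G$ at every size: since at $j=m$ this multiset consists solely of $G$ itself (up to isomorphism), the conjecture would follow. First, I would use the explicit rank formulas at the beginning of Section~\ref{sec:graphdiss} --- Equation~\eqref{eqn:horgraph} together with the identification $\HH^h_*(G,\varepsilon,0) \cong \HH_*(Bl(G,\varepsilon))$ --- to rewrite each entry
$$\Theta(G,j)[i,k] \;=\; \sum_{|\varepsilon|=j} \mathrm{rk}(\HH^h_i(G,\varepsilon,k))$$
as a $\Z$-linear combination of counts $n_T(G)$, where $T$ ranges over isomorphism types of ``coloured templates'' given by a pair $(G[B], G[V\setminus B])$ together with its bipartite connecting data. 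For example, $\Theta(G,j)[1,2] = |E(G)|\binom{m-2}{j}$ and $\Theta(G,j)[0,0] = \sum_{|B|=j} c(G[B])$, while the remaining non-trivial bi-degrees encode combinations of common-neighbour and degree statistics between $B$ and $V\setminus B$. Letting $j$ and $(i,k)$ range over all admissible values produces a large structured linear system in the unknowns $\{n_T(G)\}$.

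The crux of the argument would then be to prove that this system is of full rank, so that the template counts $n_T(G)$ are uniquely recoverable from $\Theta$. A natural attack is induction on $j$: $\Theta(G,0)$ and $\Theta(G,1)$ already fix $m$ and $|E(G)|$, and one aims to peel off, as $j$ grows, successively finer invariants --- degree moments, adjacency patterns of triples, common-neighbour statistics, and so on. To enlarge the pool of testable quantities, one can invoke Theorem~\ref{thm:decomposition}, which canonically splits the horizontal differential of $(G,\varepsilon)$ into a disjoint union of elementary Morse matchings; this yields structural data beyond the bare ranks themselves. Once every $n_T(G)$ has been determined, specialising to $j=m$ uniquely pins down $G$ inside its induced-subgraph profile.

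The main obstacle is precisely the injectivity in the previous step. The sums $\Theta(G,j)[i,k]$ aggregate ranks over all $\binom{m}{j}$ colourings, and it is not at all clear \emph{a priori} that two distinct multisets of induced sub-configurations cannot give rise to identical $\Theta$-profiles. In fact, at $j = m-1$ the statement becomes very close in spirit to the classical reconstruction conjecture, so a complete proof will likely require either introducing finer invariants extracted from the filtered complex --- for instance, exploiting the ``dalmatian building blocks'' of Proposition~\ref{prop:dalmatianhor} to separate templates that the raw ranks cannot distinguish --- or an inductive bootstrap grounded in the small cases already verified computationally in~\cite{miogithub}, namely all connected simple graphs on at most $10$ vertices and all trees on at most $16$.
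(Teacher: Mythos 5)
The statement you are addressing is stated in the paper as Conjecture~\ref{conj:completegraph} and is left \emph{open} there: the paper offers only partial detection results (Lemma~\ref{lem:boundsondelta}, detection of complete graphs, cycles, regular graphs, vertex covers and spacious trees) together with computational evidence for graphs with at most $10$ vertices and trees with at most $16$. So there is no proof in the paper to compare yours against, and more importantly, what you have written is not a proof either. Your own text concedes this: the entire argument hinges on the claim that the linear system expressing the aggregated ranks $\Theta(G,j)[i,k]$ in terms of template counts $n_T(G)$ has full rank, and you explicitly state that this injectivity ``is not at all clear \emph{a priori}.'' That is precisely the content of the conjecture, so the proposal is circular at its core: everything before and after that step is routine, and the step itself is the open problem.

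Two further concrete difficulties with the strategy as outlined. First, each colouring $\varepsilon$ contributes only a handful of numbers to $\Theta(G,j)$ --- by Equation~\eqref{eqn:horgraph} and the surrounding discussion these are the count of white--white edges, $|V_{ww}|$, $\sum_{v \in V_{wb}}(bdeg(v)-1)$, and the Betti numbers of $Bl(G,\varepsilon)$ --- and these are then summed over all $\binom{m}{j}$ colourings. This is far coarser than the isomorphism type of the ``template'' $(G[B], G[V\setminus B], \text{connecting edges})$, so there is no reason to expect the resulting system to determine the template counts, let alone the induced-subgraph profile; your invocation of Theorem~\ref{thm:decomposition} and Proposition~\ref{prop:dalmatianhor} to ``enlarge the pool of testable quantities'' would require extracting data that is not recorded in $\Theta$ at all (only ranks enter the definition). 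Second, as you note, at $j = m-1$ the question is already at least as delicate as graph reconstruction from vertex-deleted subgraphs, itself a longstanding open problem; a correct proof would therefore need either a genuinely new idea or a way to sidestep this comparison, neither of which is supplied. The proposal is best read as a reasonable research plan, not a proof.
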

We are now ready to give a proper definition of $\Delta$.
\begin{defi}
For a pair of simple and connected graphs $G_1$, $G_2$ with the same number of vertices $m$, define their \emph{dissimilarity} as the rational number in the unit interval $$\Delta (G_1,G_2) = 1- \frac{1}{m}\cdot\min\{j\ge 0\,|\, \Theta(G_1,j) \neq \Theta(G_2,j) \}$$
If instead the two graphs have a different number of vertices define  their dissimilarity as $\Delta (G_1,G_2) = \infty$.
\end{defi}
Heuristically, a small dissimilarity means that the two graphs share many features, while a value close to $1$ indicates that the two graphs are indeed rather different from a topological and\slash or combinatorial point of view.

\begin{prop}\label{prop:triangleineq}
If $G_1,G_2$ and $G_3$ are simple and connected graphs with the same number $m$ of vertices, then 
$$\Delta(G_1,G_2) \le \Delta(G_1,G_3) + \Delta(G_3,G_2).$$
\end{prop}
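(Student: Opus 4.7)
The plan is to reduce the inequality to a simple statement about the first index at which two collections $\Theta(G,j)$ differ. Write $k(G,G') = \min\{j \ge 0 \mid \Theta(G,j) \neq \Theta(G',j)\}$, adopting the convention that this minimum equals $m$ (so that $\Delta = 0$) whenever the set is empty, i.e.\ whenever all $\Theta$'s agree. Under this convention the formula in the definition of $\Delta$ reads $\Delta(G,G') = 1 - k(G,G')/m$, and we always have $0 \le k(G,G') \le m$, hence $0 \le \Delta(G,G') \le 1$.

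The central observation I would use is the following monotonicity: for any three graphs on $m$ vertices,
\[
k(G_1,G_2) \;\ge\; \min\bigl(k(G_1,G_3),\; k(G_3,G_2)\bigr).
\]
Indeed, if $j < \min(k(G_1,G_3),k(G_3,G_2))$, then by definition of $k$ we have $\Theta(G_1,j) = \Theta(G_3,j)$ and $\Theta(G_3,j) = \Theta(G_2,j)$; transitivity forces $\Theta(G_1,j) = \Theta(G_2,j)$. Hence the minimal $j$ at which $\Theta(G_1,\cdot)$ and $\Theta(G_2,\cdot)$ can first disagree is at least $\min(k(G_1,G_3),k(G_3,G_2))$.

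From here the inequality is a short computation. Without loss of generality assume $k(G_1,G_3) \le k(G_3,G_2)$, so that the observation gives $k(G_1,G_2) \ge k(G_1,G_3)$ and therefore
\[
\Delta(G_1,G_2) \;=\; 1 - \tfrac{k(G_1,G_2)}{m} \;\le\; 1 - \tfrac{k(G_1,G_3)}{m} \;=\; \Delta(G_1,G_3).
\]
Since $k(G_3,G_2) \le m$, we have $\Delta(G_3,G_2) = 1 - k(G_3,G_2)/m \ge 0$, and adding this nonnegative quantity to the right-hand side gives $\Delta(G_1,G_2) \le \Delta(G_1,G_3) + \Delta(G_3,G_2)$, as required.

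The only technical issue to keep an eye on is the edge case where two of the three graphs share the same $\Theta$ at every level (so the defining set is empty): handling this is what the convention $k = m$ is designed for, and it makes one of the three terms equal to $0$ so that the inequality reduces to the trivial equality between the remaining two dissimilarities. Beyond this bookkeeping, no real obstacle arises; the triangle inequality is essentially a formal consequence of the fact that $\Delta$ is a nonincreasing function of the first index where two ordered sequences of invariants disagree.
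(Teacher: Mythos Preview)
Your proof is correct and follows essentially the same route as the paper's: both arguments boil down to the ultrametric-type inequality $k(G_1,G_2)\ge\min\bigl(k(G_1,G_3),k(G_3,G_2)\bigr)$, from which $\Delta(G_1,G_2)\le\max\bigl(\Delta(G_1,G_3),\Delta(G_3,G_2)\bigr)$ follows, and the triangle inequality is then immediate by nonnegativity. The paper phrases this as a two-case split on whether $G_3$ agrees with $G_1$ up to the level where $G_1$ and $G_2$ first differ, while you state the $\min$ inequality directly; the content is the same.
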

\begin{proof}
Let us start by assuming that the $\Theta$ invariants of $G_1$ and $G_2$ coincide up to colourings with exactly $r$ black vertices; this implies that $\Delta(G_1, G_2) = 1 - \frac{r}{m}$. Then we can distinguish two cases; in the first one, we assume that $\Theta(G_3,j) = \Theta(G_1,j)$ for all $0\le j\le s \le r$. Then necessarily $\Delta(G_1,G_2) \le \Delta(G_1,G_3)$. 

If instead we assume that $\Theta(G_3,j) = \Theta(G_1,j)$ for all $0\le j \le s^\prime \le m$ for some $s^\prime > r$. Then we get that $\Delta (G_2,G_3) = \Delta (G_1,G_2)$, and we are done.
\end{proof}

We say that $\Delta$ \emph{detects} a graph $G$ if $\Delta(G, G^\prime) >0$ for any  other graph $G^\prime$ not isomorphic to $G$. Since $\Theta(G,0)$ detects the numbers of edges, it follows from the fact that the complete graph $K_m$ is the unique simple graph with $m$ vertices and $\frac{m(m-1)}{2}$ edges that $\Delta$ detects $K_m$. More precisely we see that complete graphs are maximally distant from all other graphs sharing the same number of vertices: $$\Delta(K_m,G) = 1 \, \,\,\forall \,G \neq K_m.$$ A similar statement holds for cycle graphs $$\Delta(L_m,G) \ge \frac{m -1}{m} \, \,\,\forall \,G \neq L_m.$$ In this latter case, the value of $\Theta(L_m,0)$ alone might not be enough to detect $L_m$, so we need to add the information about the valence of each vertex --which is  contained in $\Theta(L_m,1)$. 
In a similar direction, it is easy to see that the class of regular graphs is distinguished from all other graphs; this follows from the fact that $G$ is regular if and only if $\Theta(G,1)$ consists of $|V(G)|$ copies of the same tuple. In particular, regular graphs have dissimilarity at least $\frac{m-1}{m}$ from all non-regular graphs (but of course two regular graphs can have lower values of $\Delta$).
~\\

We also have the following simple result, further showing how subtle information about $G$ is encoded in the collection of its horizontal homologies, and how it can be used to provide lower bounds on $\Delta$. In what follows, $girth(G)$ is the minimal length of a cycle in $G$ and $VC(G)$ is the minimal size of a vertex cover for $G$, \emph{i.e.}~a subset of $V(G)$ including at least one endpoint of every edge of the graph.  We will denote with $deg(G)$ the sequence (in decreasing order, and allowing for repetitions) of the degrees of the vertices in $G$.
\begin{lem}\label{lem:boundsondelta}
Let $G_1$ and $G_2$ be two simple and connected graphs with $m$ vertices.
\begin{itemize}
\item If $deg(G_1) \neq deg(G_2)$ then $\Delta(G_1,G_2) \ge \frac{m-1}{m}$.
\item If  $m- \alpha = girth (G_1) < girth(G_2)$, then $\Delta(G_1,G_2) \ge \frac{\alpha}{m}$.
\item If  $m- \beta = VC (G_1) < VC(G_2)$, then $\Delta(G_1,G_2) \ge \frac{\beta}{m}$.
\end{itemize}
\end{lem}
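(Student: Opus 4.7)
The overall plan is to prove each of the three bullets by exhibiting, for the smallest possible $j$, a combinatorial feature visible in $\Theta(G_1, j)$ but not in $\Theta(G_2, j)$ (or vice versa). Since $\Delta(G_1, G_2) \ge 1 - j/m$ as soon as the multisets $\Theta(G_1, j)$ and $\Theta(G_2, j)$ differ, this directly yields the desired lower bounds. In practice I will test multiset inequality by counting, for a fixed triple $(j, i, k)$, the number of tuples of the form $(j, i, k, \ast)$ in each $\Theta$; if these counts differ, the multisets necessarily differ. The one subtlety to keep in mind throughout, and the step most prone to slipping, is that $\Theta$ omits tuples of rank zero: the distinguishing feature must show up as a count discrepancy of \emph{present} tuples, not as a ``missing'' entry that could be invisible to the invariant.

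For the degree-sequence bullet I invoke the observation made just before Example~\ref{ex:duegrafi}: the elementary colouring $\varepsilon_i$ contributes to $\Theta(G, 1)$ a tuple $(1, 1, 2, |E(G)| - \deg(v_i))$ whenever $\deg(v_i) < |E(G)|$, so the multiset of such tuples together with $|E(G)|$ (which is read off directly from $\Theta(G, 0)$) determines the degree sequence. If $|E(G_1)| \neq |E(G_2)|$, the multisets already differ at $j = 0$ and $\Delta = 1$; otherwise the edge counts coincide, so different degree sequences force $\Theta(G_1, 1) \neq \Theta(G_2, 1)$, giving $\Delta \ge (m-1)/m$.

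For the girth bullet I colour the $g = m - \alpha$ vertices of a shortest cycle in $G_1$ black. The induced subgraph $Bl(G_1, \varepsilon)$ contains this cycle, so $\HH_1(Bl(G_1, \varepsilon)) \neq 0$ and $\Theta(G_1, g)$ contains a tuple of the form $(g, 1, 0, \ast)$. On the other hand, for any colouring of $G_2$ with $|\varepsilon| = g$, the induced subgraph on the $g$ black vertices is a forest: a cycle in it would have length at most $g < \text{girth}(G_2)$, a contradiction. Hence no colouring of $G_2$ contributes a $(g, 1, 0, \ast)$ tuple, the counts differ, and $\Delta \ge \alpha/m$.

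For the vertex-cover bullet I colour a minimum vertex cover of $G_1$ (of size $c = m - \beta$) black; the $\beta$ white vertices then form an independent set, so the white subgraph has no edges and $\mathrm{rk}(\HH^h_1(G_1, \varepsilon, 2)) = 0$. This colouring of $G_1$ therefore contributes no tuple of the form $(c, 1, 2, \ast)$. By contrast, any colouring of $G_2$ with $|\varepsilon| = c$ must admit a white-white edge, since otherwise the $c$ black vertices would form a vertex cover of $G_2$, contradicting $VC(G_2) > c$. Every such colouring of $G_2$ thus contributes a $(c, 1, 2, \ast)$ tuple, so the count in $\Theta(G_2, c)$ strictly exceeds the count in $\Theta(G_1, c)$, forcing $\Theta(G_1, c) \neq \Theta(G_2, c)$ and $\Delta \ge \beta/m$.
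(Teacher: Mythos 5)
Your proof is correct and follows essentially the same route as the paper's: reading the degree sequence from the filtration-degree-$2$ ranks at $|\varepsilon|\le 1$, detecting the girth via a $(g,1,0,\ast)$ tuple coming from a black-coloured shortest cycle, and detecting the vertex cover number via the presence or absence of $(c,1,2,\ast)$ tuples. Your explicit handling of the fact that $\Theta$ omits zero-rank tuples (so that ``absence'' must be witnessed by a discrepancy in counts of present tuples) is a welcome clarification of a point the paper's proof leaves implicit, but it is not a different argument.
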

\begin{proof}
Since we can read the array $deg(G)$ by computing $\mathrm{rk}( \HH^h(G, (0, \ldots, 0), 2)) - \mathrm{rk} (\HH^h(G, \varepsilon_p,2))$ for each $p = 1, \ldots, m$, the first item follows. Note that we do not need to have the information of which vertices correspond to which values of $p$ to conclude.

For the second one, note that if $girth(G_1) = m-\alpha$ is less than $girth(G_2)$, we can distinguish the two graphs by looking at $\Theta(G,m-\alpha)$: for $G= G_1$ we get at least one four-tuple of the form $(m-\alpha, 1,0,*) \in \Theta(G_1,m-\alpha)$, while there are none in $\Theta(G_2,m-\alpha)$. 

Lastly, it is easy to argue that the vertex cover number of a graph can be equivalently defined as $$VC(G) = \min \{j\ge 0\,|\, (j, 1, 2,*) \notin \Theta(G,j)\}.$$ 
This is because if $\varepsilon$ is the colouring where all vertices of a given (minimal) vertex cover are black, by definition there are no edges with both white endpoints.
Now, similarly to the previous point,  the fact that $VC$ is detected by  $\Theta$ implies the result.
\end{proof}
The last item in Lemma~\ref{lem:boundsondelta} can be generalised, yielding a bijection 
\begin{equation*}
\{\varepsilon \in \Z_2^m \,|\, \HH^h(G, \varepsilon,2) = 0\} \longleftrightarrow \{\text{vertex covers of } G\}.
\end{equation*}
Furthermore this is a bijection of posets. Recall that finding vertex covers of a given graph is a classical example of a NP-complete problem~\cite{karp1972reducibility}.\\

There is a further type of structure which is detected by $\Theta(G)$:
\begin{defi}
Let $G$ be a simple and connected graph. A tree $T \subseteq G$ is said to be \emph{spacious} if  $T$ has at most one edge in each cycle of length $3$ in $G$.
Moreover $T$ is \emph{maximal} if it is not a sub-tree of any other spacious tree in $G$ (it might however be the sub-tree of a non-spacious tree in $G$, see Figure~\ref{fig:largetrees}).
\begin{figure}[ht]
\centering
\includegraphics[width = 8cm]{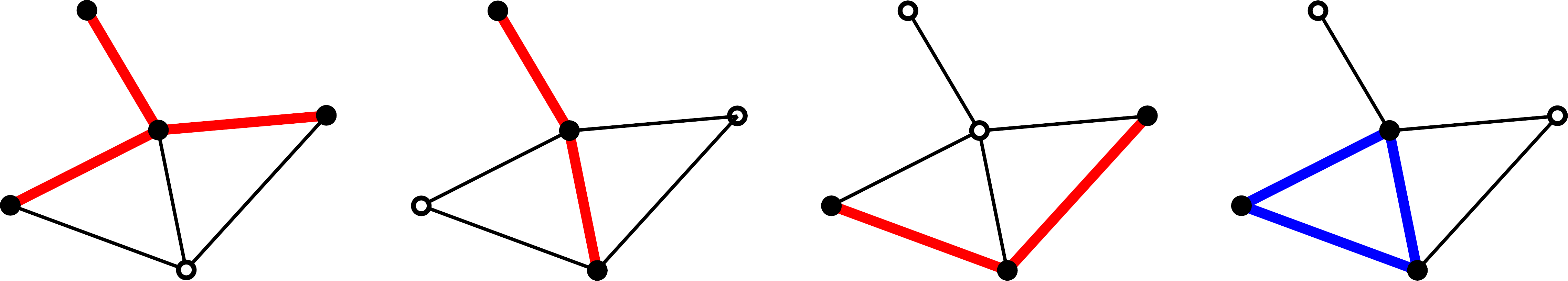}
\caption{In red, the only three maximal spacious trees for a graph with five vertices, together with their associated colourings. Note that maximal spacious trees do not necessarily have the same number of edges. On the right, we see that if two edges in a length $3$ cycle are black, then the induced black subgraph cannot be a tree.}
\label{fig:largetrees}
\end{figure}
\end{defi}
Equivalently, a tree is spacious if colouring all vertices of $T$ in black produces a black subgraph $Bl(G,\varepsilon)$ which is still a tree.

\begin{lem}
There exists a bijection 
\begin{equation*}
\left\{ \text{spacious trees in } G\right\} \longleftrightarrow \left\{ \varepsilon \in \Z_2^m\,|\, \HH^h_0(G, \varepsilon,0) \cong \F \text{ and }\HH^h_1(G, \varepsilon,0) \cong 0\right\}
\end{equation*}
\end{lem}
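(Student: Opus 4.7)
The plan is to translate the two homological conditions into a single combinatorial statement about the black subgraph, and then exhibit the bijection directly.

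First I would invoke the observation already recorded in Section~\ref{sec:graphs}: in filtration degree $0$ the horizontal chain complex $(C_*(G,\varepsilon,0),\partial_h)$ coincides with the ordinary simplicial chain complex of the black subgraph $Bl(G,\varepsilon)$, because the only generators of $w_\varepsilon$-weight $0$ are the all-black simplices and $\partial_h$ restricts to the usual simplicial boundary on them. Hence $\HH^h_i(G,\varepsilon,0)\cong \HH_i(Bl(G,\varepsilon))$ for $i=0,1$, and the hypotheses on the right-hand side of the claimed bijection become: $Bl(G,\varepsilon)$ is connected and has no $1$-cycles, i.e.~it is a tree.

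Next I would define the two candidate maps. In one direction, send a spacious tree $T\subseteq G$ to the colouring $\varepsilon_T$ whose black set is exactly $V(T)$; in the other, send an admissible colouring $\varepsilon$ to the subgraph $Bl(G,\varepsilon)$ of $G$. Well-definedness of the forward map is precisely the equivalent description of spaciousness recalled just before the statement: $T$ is spacious if and only if $Bl(G,\varepsilon_T)$ is again a tree, which gives $\HH^h_0\cong\F$ and $\HH^h_1\cong 0$. Well-definedness of the backward map is immediate from the translation above, since a tree as a simplicial complex is a tree as a subgraph of $G$.

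To finish, I would verify the two assignments are mutually inverse. Starting from a spacious tree $T$, the tree $Bl(G,\varepsilon_T)$ has vertex set $V(T)$ and contains $T$ as a subtree; since a tree on $|V(T)|$ vertices has exactly $|V(T)|-1$ edges, the containment is forced to be an equality, so $Bl(G,\varepsilon_T)=T$. In the opposite direction, given an admissible $\varepsilon$, the tree $T:=Bl(G,\varepsilon)$ is connected, hence its vertex set is exactly the set of black vertices of $\varepsilon$, so $\varepsilon=\varepsilon_T$.

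The only subtle point is the forward well-definedness, which is really the content of the spacious hypothesis: without it, $Bl(G,\varepsilon_T)$ could strictly contain $T$ (picking up extra edges of $G$ between vertices of $T$) and fail to be a tree. The edge-count argument then upgrades the \emph{tree} conclusion to the stronger equality $Bl(G,\varepsilon_T)=T$ needed for the bijection, including the trivial single-vertex case where both sides reduce to a single $0$-simplex.
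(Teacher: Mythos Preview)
Your proof is correct and follows essentially the same approach as the paper's: both identify $\HH^h_*(G,\varepsilon,0)$ with $\HH_*(Bl(G,\varepsilon))$ and then set up the bijection via $T\mapsto\varepsilon_T$ and $\varepsilon\mapsto Bl(G,\varepsilon)$. Your argument is in fact more careful than the paper's in checking that the two assignments are mutually inverse (via the edge-count argument); the one small point you leave implicit is that the backward map lands in \emph{spacious} trees, but this is immediate since $Bl(G,\varepsilon)$ is an induced subgraph and hence $Bl(G,\varepsilon_{Bl(G,\varepsilon)})=Bl(G,\varepsilon)$ is itself a tree.
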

\begin{proof}
Colour all vertices of a spacious tree $T$ in black, obtaining a colouring $\varepsilon_T$. Then the black graph $Bl(G, \varepsilon_T)$ is a tree, thus $\HH^h_0(G, \varepsilon_T,0) \cong \HH_0(Bl(G,\varepsilon_T)) \cong \F$ and  $\HH^h_1(G, \varepsilon_T,0) \cong \HH_1(Bl(G,\varepsilon_T)) \cong 0$. 
The converse is immediate, once we notice that the conditions on the horizontal homologies in filtration degree $0$ imply that $Bl(G, \varepsilon)$ has to be a tree. This tree is necessarily spacious (see the right part of Figure~\ref{fig:largetrees}).
\end{proof}
As a consequence, graphs with a different number of spacious trees (or with spacious trees with a different number of edges) must have positive dissimilarity. \\

It is natural to ask the efficiency and computational complexity of computing the dissimilarity; using a Sage program (available at~\cite{miogithub}), we were able to show that the dissimilarity is quite big for all graphs with less than $10$ vertices, and for trees with up to $16$ vertices. This implies that the $\Theta$ invariants (for small values of $j$) are indeed sufficient to identify all such graphs. 

\begin{table}[ht]
\begin{center}
\small
\begin{tabular}{|c|c|c|c|c|c|c|c|c|}
\hline
$|V|$ & $<10$ & $10$ & $11$ & $12$ & $13$ & $14$ & $15$ &$16$\\
\hline
\# trees & 95 & 106 & 235 & 551 & 1301 & 3159 & 7741 & 19320  \\
\hline
\hline
$|\varepsilon| = 3$ & -- & $ \perc{1.77}{2}$ & $\perc{1.62}{2}$ & $\perc{1.58}{2}$ &  $\perc{1.26}{2}$ &  $\perc{8.76}{3}$ &  $\perc{5.78}{3}$ &  $\perc{3.65}{3} $\\
\hline
$|\varepsilon| = 4$ & -- & -- & -- & $\perc{3.29}{4}$ & $ \perc{5.9}{5}$ &  $\perc{6.01}{5}$ & $ \perc{2.5}{5}$ &    $\perc{3.21}{6}$\\
\hline
\end{tabular}
\vspace{0.3cm}
\caption{Discriminatory power of $\Delta$ for all trees with up to $16$ vertices. The first row is the number of vertices, while the second is the number of distinct trees with the given number of vertices.
Values in the table denote the probability (rounded up) that a randomly chosen pair of trees is not detected by $\Theta$ in level $|\varepsilon|$. 
}
\label{tab:datatrees}
\end{center}
\end{table}

\begin{table}[ht]
\begin{center}
\begin{tabular}{|c|c|c|c|c|c|c|}
\hline
$|V|$   & $7$ & $8$ & $9$\\
\hline
\# graphs  & $853$ & $1117$ & $261080$\\
\hline
\hline
$|\varepsilon| = 3$  & -- & $\perc{4.0}{4}$ & $ \perc{4.43}{5}$\\
\hline
$|\varepsilon| = 4$  & -- & -- &  $ \perc{2.97}{7}$\\
\hline
$|\varepsilon| = 5$  & -- & -- & $ \perc{1.46}{9}$\\
\hline
$|\varepsilon| = 6$  & -- & -- & $\perc{2.93}{9}$\\
\hline
\end{tabular}
\vspace{0.3cm}
\caption{Results of the computation of $\Delta$ for all graphs with less than $10$ vertices.}
\label{tab:data}
\end{center}
\end{table}
It is apparent that --at least in the range we considered above-- the probability of finding two different graphs with small dissimilarity is extremely low, and drastically decreases for increasing values of $|\varepsilon|$.

In theory the computational cost of computing $\Delta$ is generally rather high as a function of the number of vertices $m$; first for $\Theta(G,j)$ there are $\binom{m}{j}$ possible colours with weight $j$, and we need to compute the horizontal homology of $G$ for each of them. Furthermore, we have to perform the same computations for each graph considered. 
However, it appears that for most practical purposes it is sufficient to restrict our computations to $j\le 3$. Moreover, since the information contained in $\Theta(G,j)$ for $j = 0,1$ can be recovered easily without the need to compute any horizontal homology group, we can just restrict to $j=2,3$. Therefore we only need to compute $\frac{m(m-1)}{2}$ homology groups for each graph whenever $\Delta = \frac{m-2}{m}$, and $\frac{1}{6}(m^3 - 3m +2)$ if $\Delta = \frac{m-3}{m}$. Of course, it is possible to find examples of pairs of graphs with higher dissimilarity, as shown in Figure~\ref{fig:graphsbigdiss} and Table~\ref{tab:data}. However the percentage of such pairs (when randomly sampled in the space of graphs with a given number of vertices) appears to be extremely low. As an example, there are exactly two different pairs of graphs with $9$ vertices that have dissimilarity $\frac{1}{3}$ among the possible $34,081,252,660$ pairs of distinct graphs with up to $9$ vertices (and these achieve the minimal dissimilarity among all such pairs, see Table~\ref{tab:data}).

\begin{figure}[ht]
\centering
\includegraphics[width=12cm]{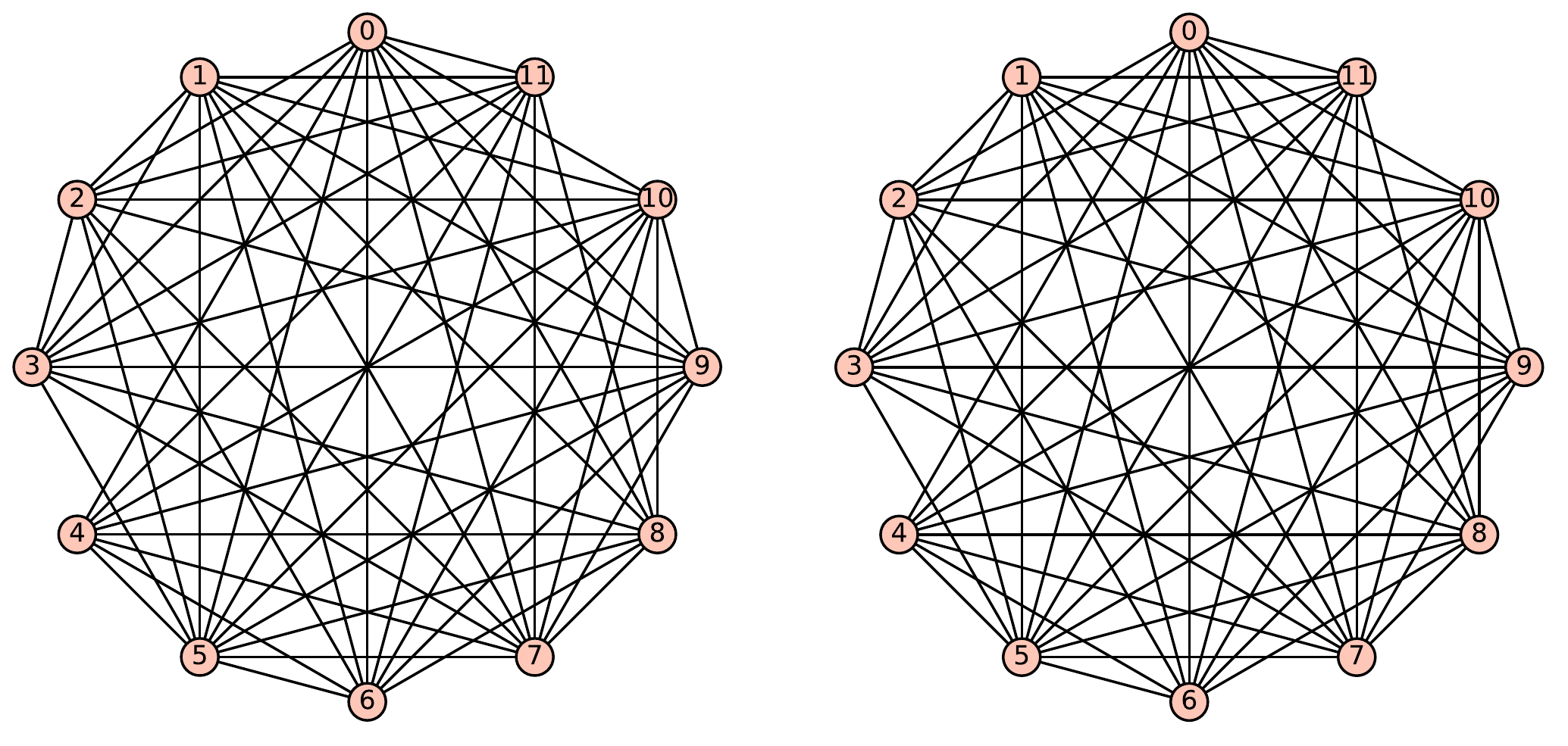}
\caption{Two graphs on $12$ vertices with dissimilarity $\frac{2}{3}$.}
\label{fig:graphsbigdiss}
\end{figure}
There is a sequence of extremely simple pairs of graphs with a relatively small dissimilarity; for any positive integer $n$, consider two loop graphs on $2n$ vertices. To obtain the first graph $G_1(n)$ connect one vertex to its antipode, and for $G_2(n)$ connect a vertex to one of the two neighbours of its antipode, as in Figure~\ref{fig:circleswithdiameter}.
\begin{figure}[ht]
\centering
\includegraphics[width=8cm]{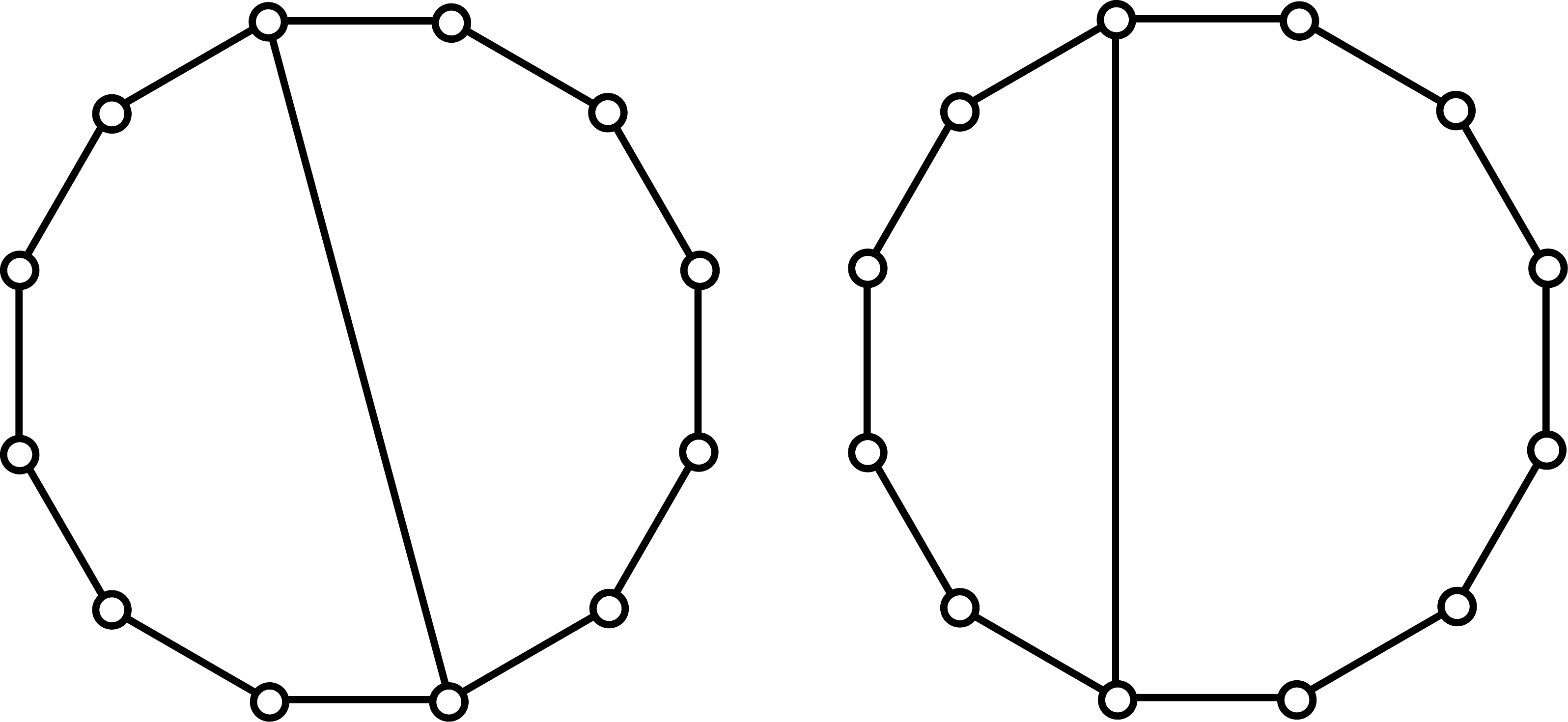}
\caption{The two graphs $G_1(6)$ and $G_2(6)$; each pair in these two families of graphs has dissimilarity close to $\frac{3}{4}$.}
\label{fig:circleswithdiameter}
\end{figure}
It can be proved that the dissimilarity between these pairs is $\Delta(G_1 (n),G_2 (n)) = 1 - \frac{1}{2n}\cdot \lceil \frac{n}{2}\rceil$.

\section{\"Uberhomology}\label{sec:uberhomology}

In this section we outline a procedure to organise the whole poset of $\varepsilon$ colourations of a finite and connected simplicial complex $X$ with $m$ vertices, and exploit the structure of the horizontal homology developed in the previous sections to define a further homology theory, independent of the colouring.

Firstly let us observe that if we are looking at all possible colourations, using  Remark~\ref{rmk:isohordiag} it makes sense to only consider  horizontal homologies, in order to avoid repetitions. 

Consider the $1$-skeleton of the $m$-cube with vertices $\varepsilon \in \Z_2^m$, and decorate each vertex with the corresponding horizontal homology group $\HH^h(X,\varepsilon)$. A schematic picture of a cube for a simplicial complex with three vertices is shown in Figure~\ref{fig:cubo}.

We are now going to  to combine the information contained in the decorated $m$-cube in a way that is reminiscent of Khovanov's cube of resolutions \cite{khovanov2000categorification} for link diagrams. See also \cite{bar2002khovanov} for a simplified approach to Khovanov's homology, and \cite{helme2005categorification}, \cite{loebl2008chromatic}, \cite{everitt2009homology} for related constructions.\\

Consider an edge of the cube; if it connects $\varepsilon$ to $\varepsilon^\prime$, denote it by $\eta$, where $\eta(i) = \varepsilon(i)$ whenever $\varepsilon(i) = \varepsilon^\prime (i)$, and it equals a symbol $*$ in the only position where they differ. The edges are oriented in the direction of increasing $|\varepsilon|$ (see Figure~\ref{fig:cubo}).

\begin{figure}[ht]
\begin{tikzcd}[row sep=1.5em, column sep = 1.5em]
&  & {\HH^h(X,(1,0,0))} \arrow[rr, "{d_{(1,*,0)}}" ] \arrow[rrdd, "{d_{(*,1,0)}}" ]   &  & {\HH^h(X,(1,1,0))} \arrow[rrdd, "{d_{(1,1,*)}}"] & &\\
& & & & & &\\
{\HH^h(X,(0,0,0))} \arrow[rruu, "{d_{(*,0,0)}}"] \arrow[rr, "{d_{(0,*,0)}}" ] \arrow[rrdd, "{d_{(0,0,*)}}" '] &  & {\HH^h(X,(0,1,0))} \arrow[rruu, "{d_{(1,0,*)}}"'] \arrow[rrdd, "{d_{(*,0,1)}}"] & & {\HH^h(X,(1,0,1))} \arrow[rr, "{d_{(1,*,1)}}"] & & {\HH^h(X,(1,1,1))} \\
& & & & & &\\
&  & {\HH^h(X,(0,0,1))} \arrow[rruu, "{d_{(0,1,*)}}"'] \arrow[rr, "{d_{(0,*,1)}}" '] &  & {\HH^h(X,(0,1,1))} \arrow[rruu, "{d_{(*,1,1)}}" '] & &
\end{tikzcd}
\caption{The poset structure for the filtered horizontal homologies of a simplicial complex with $3$ vertices.} 
\label{fig:cubo} 
\end{figure}
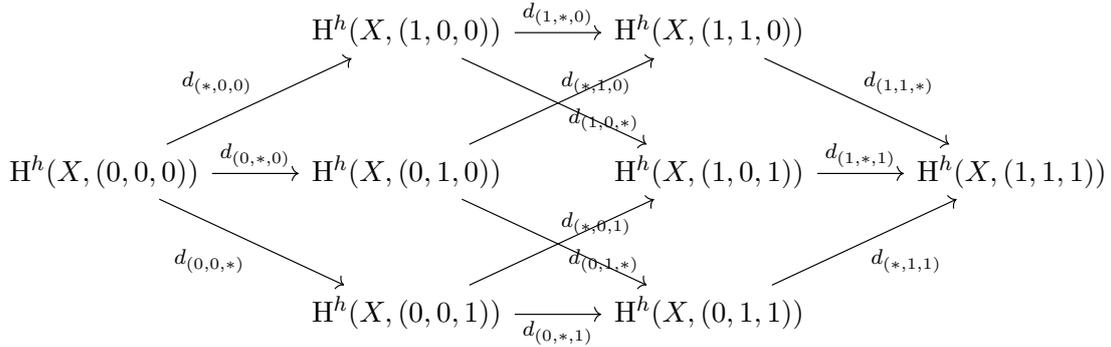

To each such cube edge $\eta$, we associate a map 
\begin{equation}\label{eqn:deta}
d_\eta: \HH_*^h(X,\varepsilon) \longrightarrow  \HH_*^h(X,\varepsilon^\prime). 
\end{equation}

The definition of these maps is quite straightforward; first consider the identity map $\mathrm{Id} : C(X) \longrightarrow C(X)$, and note that if we endow the complexes with the two filtrations $w_\varepsilon$ and $w_{\varepsilon^\prime}$ induced by the adjacent vertices $\varepsilon$ and $\varepsilon^\prime$, then $\mathrm{Id} : C(X, \varepsilon) \longrightarrow C(X, \varepsilon^\prime)$ is a filtered map. Namely, $w_\varepsilon (\sigma) \le w_{\varepsilon^\prime}$. The map $d_\eta$ associated to the cube's edges is the map induced on the horizontal homology by the weight-preserving part of these filtered identity maps.

In other words, to define these maps, write $[x]_\varepsilon$ for a generator of $\HH^h(X, \varepsilon)$, with the understanding that $[x]_\varepsilon$ is represented by $x \in C(X, \varepsilon)$, a linear combination of simplices with the same bi-grading. Then the value of $d_\eta ([x]_\varepsilon)$ is given by $[x]_{\varepsilon^\prime}$, the class represented by the same exact sum of simplices $x$ in the homology group $\HH^h(X, \varepsilon^\prime)$.

It turns out that, in order to get well-definedness of these maps (proved in Proposition~\ref{prop:dsquare} below), we need to impose that the filtration degree is preserved, namely if $w_\varepsilon (x) \neq w_{\varepsilon^\prime} (x)$, then $d_\eta ([x]_\varepsilon)=0$, which implies that $d_\eta (\HH^h_i(X, \varepsilon, k)) \subseteq \HH^h_i(X, \varepsilon^\prime, k) $. Note that, in the discussion above, it is not restrictive to only consider homogeneous generators, as with this definition $d$ is linear and preserves the bi-degree of the generators. 

It is possible to give an alternative definition of the maps $d_\eta$; the filtered identity map $\mathrm{Id} : C(X, \varepsilon) \longrightarrow C(X, \varepsilon^\prime)$ induces a map of spectral sequences. Then $d_\eta$ is simply the induced map between the associated graded objects.

We can then define for $0\le j\le m$ $$\ddot{C}^j(X) = \bigoplus_{|\varepsilon| = j} \HH_*^h(X,\varepsilon),$$
and $$d^j = \bigoplus_{|\eta| = j} d_\eta.$$
Here $|\eta|$ is the sum of the entries in $\eta$ that are not decorated with $*$.\\

Before proving that $d$ is in fact a well-defined differential on $\ddot{C}(X)$, let us give a brief description of the effect that it has on a simple example; when passing from $\HH^h(X, \varepsilon)$ to $\HH^h(X, \varepsilon^\prime)$ (where again $\varepsilon$ and $\varepsilon^\prime$ are two adjacent vertices in the cube) if a simplex $\sigma \subseteq X$ contains the unique vertex that becomes black, we are adding one component to its horizontal boundary. 
Namely, the horizontal boundary of $\sigma$ in $C(X, \varepsilon^\prime)$ will differ from $\partial_h (\sigma)$ in $C(X, \varepsilon)$ by the addition of the face of $\sigma$ obtained by removing the new black vertex. \\
In this case the value on $[\sigma]_\varepsilon$ of the map $d_\eta$ associated to the edge from $\varepsilon$ to $\varepsilon^\prime$ is $0$, as the filtration degree is not preserved. 

One might wonder if a cycle can be sent to a non-cycle through the differentials $d$; let $x \in C(X, \varepsilon)$ be a cycle for the horizontal differential, so\footnote{Here we have highlighted the dependency of the horizontal differential on the colouring.} 
$\partial_h^\varepsilon (x) = 0$. Assume that $\varepsilon^\prime$ is obtained from $\varepsilon$ by colouring in black the vertex $v$ via the edge $\eta$. Then it turns out that requiring that $x$ is a cycle in $C(X, \varepsilon^\prime)$, is equivalent to requiring that $d_\eta$ preserves the $w$ degree. This is one of the main technical facts that allows for the good definition of the \"uberdifferential. 

\begin{lem}\label{lem:techinical}
Let $\eta$ be the edge between $\varepsilon$ and $\varepsilon^\prime$ induced by colouring in black the vertex $v$, and $x = \sum_{r =1}^p \sigma_r$ be a homogeneous cycle in $\left( C(X,\varepsilon), \partial_h \right)$. Then $x$ is a cycle in $\left( C(X, \varepsilon^\prime), \partial_h \right)$ if and only if $v$ does not belong to $\cup_{r=1}^p V(\sigma_r)$.
\end{lem}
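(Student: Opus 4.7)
The plan is to directly compare the two horizontal differentials $\partial_h^\varepsilon$ and $\partial_h^{\varepsilon'}$ applied to a single simplex, and then to assemble the comparison over the sum making up $x$.

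First I would make the following pointwise observation. Recall that $\partial_h^\varepsilon(\sigma)$ is exactly the sum of the faces of $\sigma$ obtained by removing a vertex that is black in the colouring $\varepsilon$. Passing from $\varepsilon$ to $\varepsilon'$ only changes the colour of $v$ (from white to black), so for a simplex $\sigma \subseteq X$ the difference $\partial_h^{\varepsilon'}(\sigma) - \partial_h^\varepsilon(\sigma)$ over $\F$ is zero if $v \notin \sigma$, and equals the single face $\sigma \setminus v$ if $v \in \sigma$. This is the only non-trivial computation in the proof.

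Next I would apply this linearly to $x = \sum_{r=1}^p \sigma_r$. Using $\partial_h^\varepsilon(x)=0$ by hypothesis, one obtains
\begin{equation*}
\partial_h^{\varepsilon'}(x) \;=\; \sum_{r\,:\, v \in \sigma_r}(\sigma_r \setminus v).
\end{equation*}
This identity already gives the backward implication immediately: if $v \notin \bigcup_r V(\sigma_r)$, the index set of the sum on the right is empty, so $\partial_h^{\varepsilon'}(x)=0$ and $x$ is a cycle in $(C(X,\varepsilon'),\partial_h)$.

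For the forward implication I would argue that the above sum is nonzero whenever at least one $\sigma_r$ contains $v$. The key point is that the map $\sigma \mapsto \sigma \setminus v$ is injective on the collection of simplices containing $v$ (one can recover $\sigma$ by adjoining $v$ back), and the $\sigma_r$ themselves are distinct as basis elements of $C(X)$. Hence the faces $\{\sigma_r \setminus v\}_{r\,:\, v\in \sigma_r}$ form a set of pairwise distinct simplices in $X$, and their sum over $\F$ cannot cancel. So $\partial_h^{\varepsilon'}(x)=0$ forces the index set $\{r : v \in \sigma_r\}$ to be empty, which is exactly the required conclusion.

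The only subtle ingredient is the pointwise comparison in the first step, and this is essentially by inspection; I do not anticipate any serious obstacle. One minor point worth noting in passing: if $v$ belongs to every $\sigma_r$ then $x$ lives in bi-degree $(i,k-1)$ rather than $(i,k)$ in the $\varepsilon'$-grading, so in this case $x$ is still homogeneous in $C(X,\varepsilon')$ but with shifted weight, consistent with the prescription that $d_\eta$ kills classes whose weight drops.
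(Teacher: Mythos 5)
Your proof is correct and follows essentially the same route as the paper: both compare $\partial_h^{\varepsilon}$ and $\partial_h^{\varepsilon'}$ termwise and observe that the only new contributions are the faces opposite to $v$ in the simplices containing it. Your explicit remark that $\sigma \mapsto \sigma \setminus v$ is injective on simplices containing $v$ (so the new faces cannot cancel over $\F$) cleanly justifies a step the paper's proof leaves implicit.
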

\begin{proof}
If the vertex $v$ becoming black along $\eta$ is not in $\cup_{r=1}^p V(\sigma_r)$, then clearly $\partial_h^\varepsilon (x) = \partial_h^{\varepsilon^\prime} (x) = 0$, since the horizontal boundary only depends on the coloured simplices representing $x$.

Let us consider what happens instead if $v \in \cup_{r=1}^p V(\sigma_r)$; firstly $x$ might not be homogeneous in $\left( C(X, \varepsilon^\prime), \partial_h \right)$. Let us write $x = \sum_{r = 1}^{p_0} \sigma_r^0 +\sum_{r = 1}^{p_1} \sigma_r^1 $, where $w_{\varepsilon^\prime} (\sigma_r^0) = w_{\varepsilon} (x) $ and $w_{\varepsilon^\prime} (\sigma_r^1) = w_{\varepsilon} (x) -1$. Let us focus on the latter simplices; their horizontal boundary, gains a new codimension one face for each simplex $\sigma_r^1$, when the colouring changes from $\varepsilon$ to $\varepsilon^\prime$. 
We can explicitly identify these new faces appearing with the simplices that are opposite to $v$ in each $\sigma_r^1$. Therefore $\partial_h^{\varepsilon^\prime} (x) \neq 0$, and $x$ is not a cycle in $\left( C(X, \varepsilon^\prime), \partial_h \right)$.
\end{proof}

\begin{prop}\label{prop:dsquare}
The chain map $d$ is a bi-grading preserving differential.
\end{prop}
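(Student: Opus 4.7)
The plan is to lift the construction to the chain level and check everything there. For an edge $\eta$ from $\varepsilon$ to $\varepsilon'$ obtained by colouring a single white vertex $v$ black, I would introduce the chain-level map $D_\eta: C(X,\varepsilon)\to C(X,\varepsilon')$ defined by $D_\eta(\sigma)=\sigma$ if $v\notin\sigma$ and $D_\eta(\sigma)=0$ otherwise; this is precisely the weight-preserving part of the filtered identity. The first step is to verify that $D_\eta$ commutes with the horizontal differentials. If $v\notin\sigma$, then the sets of black vertices of $\sigma$ in $\varepsilon$ and in $\varepsilon'$ agree, so $\partial_h^{\varepsilon'}(\sigma)=\partial_h^\varepsilon(\sigma)$, and none of its summands contain $v$. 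If instead $v\in\sigma$, then $D_\eta(\sigma)=0$, and each summand of $\partial_h^\varepsilon(\sigma)$ is obtained by removing a black vertex $b\neq v$, so every such summand still contains $v$ and is annihilated by $D_\eta$. Thus $D_\eta$ is a chain map and descends to a well-defined $d_\eta$ on $\HH^h$. (An equivalent cycle-to-cycle phrasing follows directly from Lemma~\ref{lem:techinical}.)

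Bi-grading preservation is then immediate from the definition: $D_\eta$ preserves the simplicial dimension, and whenever it is nonzero on $\sigma$ the $\varepsilon$- and $\varepsilon'$-weights of $\sigma$ coincide. Consequently each $d_\eta$ restricts to a map $\HH^h_i(X,\varepsilon,k)\to\HH^h_i(X,\varepsilon',k)$, and the global differential $d^j$ sends $\ddot{C}^j(X)$ into $\ddot{C}^{j+1}(X)$ while preserving the internal bi-grading.

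For $d^2=0$, I would show that each $2$-face of the cube commutes on the nose at the chain level. If $u,v$ are the two white vertices flipped around a face with corners $\varepsilon,\varepsilon_u,\varepsilon_v,\varepsilon_{uv}$, a direct case analysis yields
\[
D_{\eta_v^u}\circ D_{\eta_u}(\sigma) \;=\; \begin{cases} \sigma & \text{if }\{u,v\}\cap\sigma=\emptyset,\\ 0 & \text{otherwise,}\end{cases}
\]
and the identical formula for $D_{\eta_u^v}\circ D_{\eta_v}$. Hence the two compositions agree as chain maps, and therefore also as maps on $\HH^h$; working over $\F$, their sum vanishes. Grouping the summands of $d^{j+1}\circ d^j$ according to the $2$-face each comes from expresses it as a sum of such cancelling pairs, so $d^2=0$.

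I do not expect any serious obstacle. The only delicate point is the chain-map property in the case $v\in\sigma$, which works because $\partial_h$ only removes black vertices and so cannot remove the still-white $v$, keeping every summand $v$-supported and hence killed by $D_\eta$.
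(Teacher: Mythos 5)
Your proof is correct and follows essentially the same route as the paper's: the key observation in both is that $\partial_h$ only deletes vertices that are black in $\varepsilon$, so the newly blackened (still white in $\varepsilon$) vertex $v$ persists in every summand of $\partial_h^{\varepsilon}(\sigma)$, and the $d^2=0$ step is the same mod-$2$ cancellation over the square faces of the cube. Your packaging is marginally cleaner in that you verify the chain-level identity $D_\eta\circ\partial_h^{\varepsilon}=\partial_h^{\varepsilon'}\circ D_\eta$ once and deduce well-definedness formally, whereas the paper checks directly on cycles and boundaries that the ``surviving part'' of a boundary remains a boundary.
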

\begin{proof}
Let us begin by noting that $d$ is a homomorphism. This follows from the definition of the maps $d_\eta$, as these are obtained as the grading-preserving component of a filtered homomorphism.

Next we can show that $d$ is well-defined, \emph{i.e.} that its values do not depend on the chosen representative $[x]_\varepsilon \in \HH^h(X, \varepsilon)$. As $d$ preserves the $(dim, w)$ bi-grading (since all maps $d_\eta$ do by definition), we can assume that $[x]_\varepsilon$ is represented by a sum of simplices with the same bi-grading. Let $y = \sum_{r=1}^p \tau_r$ be a cycle represented by a sum of simplices in the same bi-degree as $x$, representing the trivial class in $\HH^h(X, \varepsilon)$; in other words there exists a $z = \sum_{r = 1}^q \sigma_r$ such that $y = \partial^\varepsilon_h \left(z\right)$ in $\left( C(X, \varepsilon), \partial^\varepsilon_h \right)$, for simplices $\sigma_r$ with $w_\varepsilon(\sigma_r) = w_\varepsilon (y)$ and $dim(\sigma_r) = dim(y)+1$. 

In order to prove that $d([x + y]_\varepsilon) = d([x]_\varepsilon)$, it suffices to show that this holds for each $d_{\eta}$ whose domain is $\HH^h(X, \varepsilon)$, as these are the only components of the differential whose domain contains these elements. Assume that $\eta$ is the edge connecting $\varepsilon$ to $\varepsilon^\prime$, and $v$ is the vertex that becomes black along $\eta$. 
If $v$ does not belong to the vertices of the simplices in $y$ and $z$, then both $\partial^{\varepsilon^\prime}_h(y) = 0$ and $\partial^{\varepsilon^\prime}_h(z) = y$. Thus, $[x + y]_{\varepsilon^\prime} = [x]_{\varepsilon^\prime}$, and the statement follows in this case.

If instead the black vertex $v$ appears in some $\tau_r$, there are two possibilities to consider; if $v$ belongs to all the simplices $\tau_r$ that compose $y$, then the weight of $y$ decreases by one. Therefore in this case, $d_\eta ([y]_\varepsilon) = 0$ by definition. If however $v$ does not belong to all simplices in $y$, then $y$ is not a homogeneous generator in $C(X, \varepsilon^\prime)$. We can nonetheless prove that the simplices composing $y$ that do not contain $v$ still form a boundary in $C(X, \varepsilon^\prime)$ (this is the part of $y$ that ``survives'' in the passage from the colouring $\varepsilon$ to $\varepsilon^\prime$). To this end, write $y^\prime = \sum_{s \text{ such that } v\notin \tau_s} \tau_s$ and $z^\prime = \sum_{r \text{ such that } v\notin \sigma_r} \sigma_r$. Then, by definition, $\partial^{\varepsilon^\prime}_h (z^\prime) = y^\prime$.



We are now left to prove that $d$ squares to $0$. This will come as a consequence of the commutativity of the components of the differential along square faces of the cube. More precisely, the definition of the components of the differential $d$ implies that the two orders for colouring in black two vertices give contributions that cancel each other out.

Note that for any $[x]_\varepsilon  \in \HH^h(X, \varepsilon)$, the composition  $d^{|\varepsilon|+1} \circ d^{|\varepsilon|} ([x]_\varepsilon)$ is a sum of elements of the form $d_{\eta} \circ d_{\eta^\prime}([x]_\varepsilon)$, where $\eta$ and $\eta^\prime$ correspond to two consecutive edges of the cube, or equivalently to two different $0$ entries in $\varepsilon$. Assume that (using the notation of Figure~\ref{fig:cubo}) $\eta$ and $\eta^\prime$ have a $*$ respectively in the indices $a$ and $b$ in the set $\{r \:|\;\varepsilon(r) = 0\}$. 

Then their contribution is cancelled over $\F$ by the composition of the maps corresponding to travelling first along the edge $\widetilde{\eta}$ and then $\widetilde{\eta}^\prime$, with $*$ in the positions $b$ and $a$ respectively.
\end{proof}

\begin{defi}\label{def:uberh}
The \emph{\"{u}berhomology} of a finite and connected simplicial complex $X$, denoted by $\uH_*(X)$, is the homology of the complex $\left( \ddot{C}^*(X), d^*\right)$. Since the differential preserves the $(dim, w)$ bi-degree, $\uH(X)$ is triply graded:
$$\uH(X) = \bigoplus_{\substack{j = 0,\ldots,\,|V(X)|\\i = 0,\ldots,\,dim(X) \\ k = 0, \ldots,\,dim(X)+1}} \uH_i^j(X,k).$$
We will refer to $j,i,k$ as the homological, dimension and filtration degrees respectively.
\end{defi}

\begin{rmk}
The proof of Proposition~\ref{prop:dsquare} uses in a crucial way the choice of $\F$ coefficients. Nonetheless, adopting the categorical approach of~\cite{ubergrafi} it is possible to show that the same framework can be extended to general coefficients.
\end{rmk}

\begin{rmk}
Up to isomorphisms, the \"uberhomology does not depend on the ordering of the vertices of $X$ or on the colourings, hence it is an invariant of the simplicial complex. Unlike ``regular'' simplicial homology, it is not a homotopy invariant of $|X|$, the geometric realisation of $X$, as we will see in Section~\ref{sec:computations}. We will however prove in Theorem~\ref{thm:topdim} that, if $X$ is a triangulated manifold, its top-dimensional \"uberhomology is in fact independent of the triangulation.
\end{rmk}

\begin{rmk}
As noted already in Lemma~\ref{lem:techinical} the requirement that each map $d_\eta$ is $0$ whenever the $w$ degree of a generator decreases cannot be relaxed. A simple example is provided in Figure~\ref{fig:ubercubefor1simplex}: without this requirement, the value of $d^2([v_1]_{(0,0)}) = d_{(1,*)}\circ d_{(*,0)}([v_1]_{(0,0)}) + d_{(*,1)}\circ d_{(0,*)}([v_1]_{(0,0)})$ would coincide with $[v_1]_{(1,1)}$ (since $d_{(0,*)}([v_1]_{(0,0)}) = 0$) which is the unique generator of $\HH^h(\Delta^1, (1,1))$, and in particular $d$ would not square to $0$.
\end{rmk}

\begin{exa}\label{ex:1spx}
\begin{figure}[ht]
\includegraphics[width = 13cm]{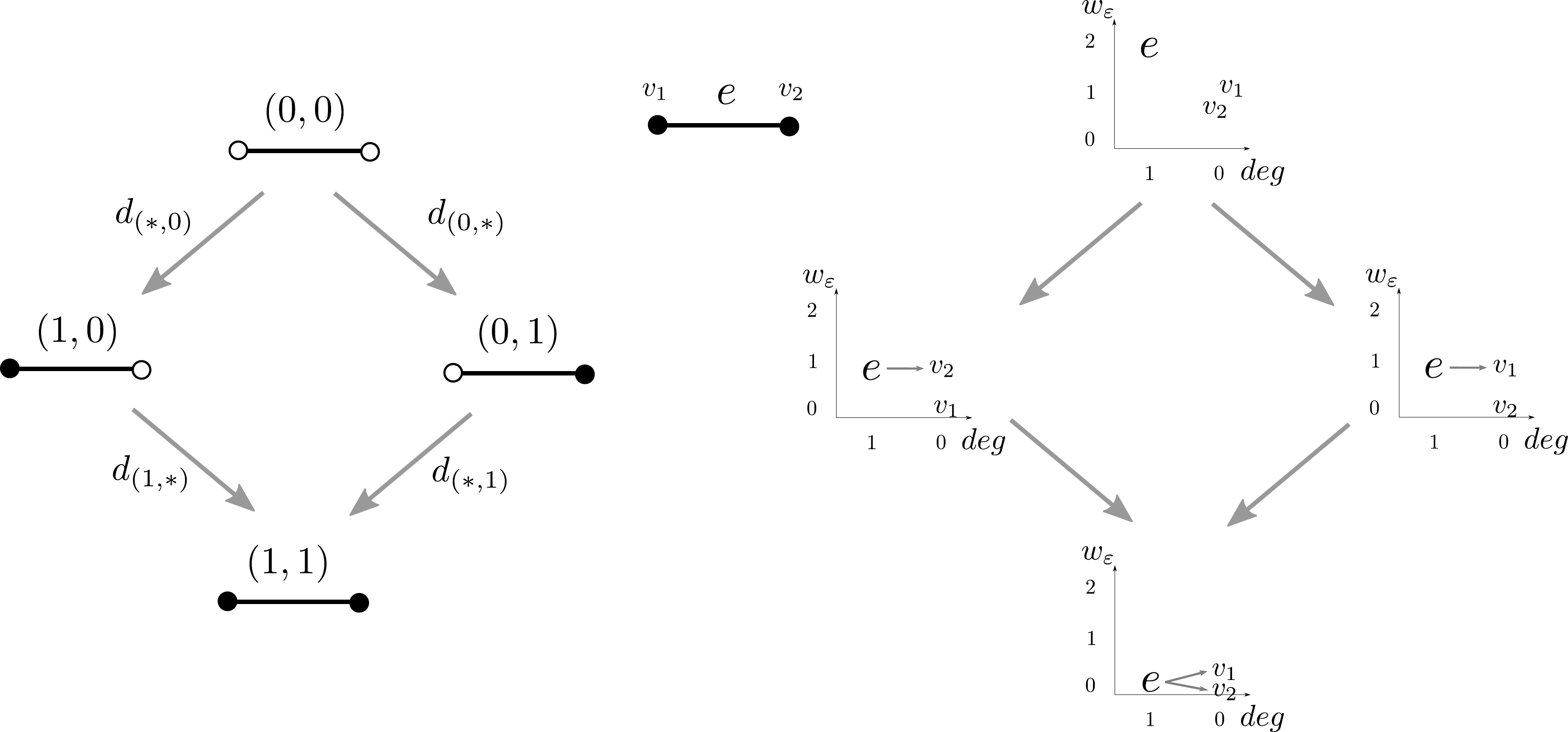}
\caption{On the left, the poset structure for the colourings of the $1$-simplex $\Delta^1$; on the right the associated horizontal complexes. The direct sum over each ``row'' on the left is the complex $\ddot{C}^j(\Delta^1)$, where  $j = |\varepsilon|$.}
\label{fig:ubercubefor1simplex}
\end{figure}
The \"{u}bercomplex of $\Delta^1$ is as follows (see Figure \ref{fig:ubercubefor1simplex}): 
\begin{align*}
\ddot{C}^0(\Delta^1) = &\F\langle [e]_{(0,0)}\rangle_{(1,2)} \oplus  \F\langle [v_1]_{(0,0)}\rangle_{(0,1)}  \oplus \F\langle [v_2]_{(0,0)}\rangle_{(0,1)} \\
\ddot{C}^1(\Delta^1) = &\F\langle [v_1]_{(1,0)}\rangle_{(0,0)}  \oplus \F\langle [v_2]_{(0,1)}\rangle_{(0,0)} \\  
\ddot{C}^2(\Delta^1) = &\F\langle [v_{1}]_{(1,1)}\rangle_{(0,0)}
\end{align*}
Here $\F^c\langle [t]_{\varepsilon}\rangle_{(a,b)}$ means that the horizontal homology has a rank $c$ component generated by the simplex $t$ in $\HH^h(X, \varepsilon)$, in bi-degree $(a,b)$.
Also, in $\ddot{C}^2(\Delta^1)$, note that $\F\langle [v_{1}]_{(1,1)}\rangle_{(0,0)} = \F\langle [v_{2}]_{(1,1)}\rangle_{(0,0)}$ as these generators are homologous in $\HH^h(\Delta^1, (1,1))$.

The only non-trivial \"{u}berhomology groups are (\emph{cf.}~Corollary~\ref{cor:uberofsimplices}): $$\uH^0(\Delta^1) \cong \F\langle [e]_{(0,0)}\rangle_{(1,2)} \oplus  \F\langle [v_1]_{(0,0)}\rangle_{(0,1)} \oplus  \F\langle [v_2]_{(0,0)}\rangle_{(0,1)},$$ and $$\uH^1(\Delta^1) \cong \F\langle [v_1]_{(1,0)} + [v_2]_{(0,1)}\rangle_{(0,0)}. $$ Note that in this latter case the generator is a sum of elements from horizontal homologies with different $\varepsilon$ colouring.
\end{exa}

\begin{rmk}\label{rmk:splitting_degrees}
Since the \"uberdifferential preserves both the filtration and homological gradings, we can think of $\uH(X)$ as a collection of groups $\uH_i^*(X, k)$ indexed by pairs $(i,k) \in \{0,\ldots, dim(X)\} \times \{0,\ldots, dim(X)+1\}$. This will become especially relevant in Section~\ref{sec:ubergraphs}.
\end{rmk}

\section{Properties and computations}\label{sec:computations}

We collect here some computations of the \"uberhomology for certain classes of simplicial complexes, and for infinite families in  specific cohomological degrees. Furthermore, we prove some general properties of $\uH(X)$, and show that it contains both topological and combinatorial information about $X$.\\

In what follows we will sometimes refer to certain topological spaces known as homology manifolds; a \emph{$n$-homology manifold} $X$ is a simplicial complex of dimension $n$ such that the link of every simplex $\sigma \subseteq X$ has the homology of a $(n-dim(\sigma))$-dimensional sphere. The class of homology manifolds contains all triangulated manifolds, namely those topological manifolds that are homeomorphic to the geometric realisation of a simplicial complex.\\

As is the case with the ordinary Khovanov homology for knots, it is possible to give in some cases an intrinsic description of the \"uberhomology groups in the $0$-th and top degrees; this is the content of Theorem \ref{thm:0dim} and Theorem \ref{thm:topdim}.
\begin{thm}\label{thm:0dim}
Given a connected simplicial complex $X$ with vertices $v_1, \ldots, v_m$, we have $$\uH^0_j (X) \cong \F^{n_j}_{(j, j+1)},$$
where $n_j$ is the number of $i$-dimensional simplices in $$\bigcap_{i = 1}^m \;\overline{St(v_i)}.$$
\end{thm}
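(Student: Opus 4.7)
The plan is to read off $\uH^0(X)$ directly from the $0$-th and $1$-st columns of the cube. Since there is no $\ddot{C}^{-1}(X)$, we have $\uH^0(X) = \ker d^0$, so the task reduces to identifying which classes in $\ddot{C}^0(X)$ are annihilated by $d^0$. By Equation~\eqref{eqn:zeroepsilon}, the complex $\ddot{C}^0(X) = \HH^h(X,(0,\ldots,0))$ has trivial horizontal differential and a basis consisting of classes $[\sigma]_{(0,\ldots,0)}$, one for each simplex $\sigma \in X$, each sitting in bi-degree $(dim(\sigma),dim(\sigma)+1)$. In particular $\uH^0_j(X,k)$ can only be non-trivial in filtration degree $k=j+1$, matching the claimed bi-degree.

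Next, I would compute $d^0 = \bigoplus_{i=1}^m d_{\eta_i}$ on these generators, where $\eta_i$ denotes the cube edge from $(0,\ldots,0)$ to the elementary colouring $\varepsilon_i$. Comparing weights yields $w_{(0,\ldots,0)}(\sigma) = dim(\sigma)+1$ while $w_{\varepsilon_i}(\sigma)$ equals $dim(\sigma)+1$ when $v_i\notin\sigma$ and $dim(\sigma)$ otherwise. By the defining rule for $d_\eta$ (which is set to vanish as soon as the filtration drops), one obtains
\[
d_{\eta_i}\bigl([\sigma]_{(0,\ldots,0)}\bigr) \;=\; \begin{cases} [\sigma]_{\varepsilon_i} & \text{if } v_i\notin\sigma,\\ 0 & \text{if } v_i\in\sigma. \end{cases}
\]

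The final ingredient is Proposition~\ref{prop:dalmatianhor} applied to the elementary (hence dalmatian) colouring $\varepsilon_i$: the non-trivial classes of $\HH^h(X,\varepsilon_i)$ in filtration $dim(\sigma)+1$ are precisely those $[\sigma]_{\varepsilon_i}$ with $\sigma \nsubseteq \overline{St(v_i)}$. Consequently $d_{\eta_i}([\sigma]_{(0,\ldots,0)})=0$ if and only if $\sigma\subseteq \overline{St(v_i)}$; note that this condition uniformly covers both the case $v_i\in\sigma$ (in which case $\sigma\in \overline{St(v_i)}$ automatically) and the case $v_i\notin\sigma$ (handled by the dalmatian vanishing). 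Since the images land in distinct direct summands of $\ddot{C}^1(X)$, the vanishing of $d^0([\sigma]_{(0,\ldots,0)})$ is equivalent to $\sigma \subseteq \overline{St(v_i)}$ for every $i=1,\ldots,m$, i.e.\ to $\sigma\in\bigcap_{i=1}^m \overline{St(v_i)}$. Tallying the $j$-dimensional simplices in this intersection gives the desired $\F^{n_j}_{(j,j+1)}$.

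The main obstacle here is purely notational bookkeeping: one must carefully line up the filtration-preservation convention in the definition of $d_\eta$ against the dalmatian description of $\HH^h(X,\varepsilon_i)$, and verify that the two separate cases ($v_i\in\sigma$ and $v_i\notin\sigma$) collapse to the uniform condition $\sigma\subseteq \overline{St(v_i)}$. Beyond this, no serious technical hurdle arises, as only the $0$-th and $1$-st layers of the cube enter the argument.
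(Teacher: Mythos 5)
Your proposal is correct and follows essentially the same route as the paper: reduce $\uH^0(X)$ to $\ker d^0=\bigcap_i\ker d_{\eta_i}$, use the fact that elementary colourings are dalmatian to describe $\HH^h(X,\varepsilon_i)$, and conclude that $\ker d_{\eta_i}$ is spanned by the simplices of $\overline{St(v_i)}$. The only cosmetic difference is that you invoke Proposition~\ref{prop:dalmatianhor} directly where the paper re-derives the Morse-matching structure from Theorem~\ref{thm:dmmandsubgraphs}; the content is identical.
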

\begin{proof}
It follows from Definition \ref{def:uberh} that   
\begin{equation}\label{eqn:0uberhom}
\uH^0(X) = \ker(d^0)  = \bigcap_{i = 1}^m \ker(d_{\eta_i}).
\end{equation} 
Here $d_{\eta_i}$ is the map corresponding to the edges $\eta$ whose only non-zero component is in position $i$.
It then suffices to prove that $\ker(d_{\eta_i})$ is generated by the simplices in $\overline{St(v_i)}$. As we will show below, this follows from Theorem~\ref{thm:dmmandsubgraphs}, after noting that all the colourings considered are elementary and thus dalmatian. 

More precisely, the only horizontal differentials in $C(X, \varepsilon_i)$ are those arising from the Morse matching induced by the elementary colouring $\varepsilon_i$ (see Figure~\ref{fig:outwardflow}). So all generators in each $C(X, \varepsilon_i)$ are either uniquely paired with another generator (with which they form an acyclic complex), or are not paired with any other generator. These latter elements are non-trivial in $\HH^h(X, \varepsilon_i)$, and, with the exception of the black vertex $v_i$, their bi-degree is the same as their counterparts in $\HH^h(X, (0,\ldots,0))$ (\emph{cf}. Equation~\eqref{eqn:zeroepsilon}), as they do not contain the black vertex. 

It is also immediate to note that $[v_i]_{(0,\ldots,0)}$ does belong to $\ker(d_{\eta_i})$, as its $w$ degree decreases by one.

So only the matched elements together with the black vertex  belong to $\ker(d_{\eta_i})$, and it is immediate to see that the associated simplices are the only ones containing a black vertex, or in other words $\ker(d_{\eta_i}) = \overline{St(v_i)}$.  The result now follows from Equation~\eqref{eqn:0uberhom}.
\end{proof}

The previous result implies at once the non-triviality of the $0$-degree part of the \"uberhomology for specific classes of simplicial complexes; denote by $X^\prime$ the barycentric subdivision of $X$.
\begin{cor}\label{cor:subdivisiondegree0}
We have that $\uH^0(X^\prime) = 0$, unless $X = \Delta^n$ for some $n\ge 1$. In this case, $\uH^0((\Delta^n)^\prime) = \F_{(0,1)}$, generated by the barycentre in $Int(\Delta^n)$.
\end{cor}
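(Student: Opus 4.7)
The plan is to apply Theorem~\ref{thm:0dim} directly to $X'$ and analyze the intersection $I := \bigcap_{v \in V(X')} \overline{St(v)}$ in the barycentric subdivision. By that theorem, $\uH^0_j(X')$ counts the $j$-simplices of $I$, so it suffices to show that $I$ is empty unless $X = \Delta^n$, and that in that case $I$ consists of a single $0$-simplex corresponding to the barycentre.

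I would first translate the condition ``$\sigma \in I$'' into combinatorial language using the standard description of $X'$: a simplex of $X'$ is a chain $\sigma_0 \subsetneq \sigma_1 \subsetneq \cdots \subsetneq \sigma_k$ of non-empty simplices of $X$, vertices of $X'$ correspond to non-empty simplices of $X$, and $\sigma \in \overline{St(\tau)}$ for a vertex $\tau$ of $X'$ means that $\{\tau\} \cup \{\sigma_0, \ldots, \sigma_k\}$ can still be linearly ordered by inclusion. Consequently, a chain lies in $I$ if and only if \emph{every} non-empty simplex $\tau \in X$ is comparable (under inclusion) with each $\sigma_i$.

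Next I would extract strong consequences by testing this condition on $0$-simplices of $X$. Fix a chain in $I$ with smallest element $\sigma_0$. For every vertex $w \in V(X)$, comparability of $\{w\}$ with $\sigma_0$ forces either $w \in \sigma_0$ or $\sigma_0 = \{w\}$. Splitting into cases, one sees that either $X$ has a single vertex (the trivial case $X = \Delta^0$) or else $V(X) \subseteq \sigma_0$, in which case $\sigma_0$ is a simplex of $X$ whose vertex set is all of $V(X)$; but then $\sigma_0$ is a top simplex containing every vertex, and closure of $X$ under taking subsets forces $X = \Delta^n$ with $n = |V(X)| - 1$. In either situation $\sigma_0$ is maximal in $X$, so no strictly larger simplex $\sigma_1 \supsetneq \sigma_0$ exists in $X$; this pins down $k = 0$ and $\sigma_0 = \Delta^n$.

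Assembling the pieces, if $X \neq \Delta^n$ for any $n \ge 0$ then $I = \emptyset$ and Theorem~\ref{thm:0dim} gives $\uH^0(X') = 0$; if $X = \Delta^n$ then $I$ is the single $0$-simplex $\{\Delta^n\}$ of $(\Delta^n)'$, which is exactly the barycentre sitting in $\mathrm{Int}(\Delta^n)$, giving $\uH^0((\Delta^n)') \cong \F_{(0,1)}$. I do not foresee a genuine obstacle: the argument is essentially a poset-theoretic unwinding of Theorem~\ref{thm:0dim}. The only care needed is in the bookkeeping of the bi-grading $(j, j+1)$ coming from the theorem, and in verifying that a chain compatible with every vertex of $X'$ is automatically compatible with every simplex of $X$, which is immediate because comparability with the extremes $\sigma_0$ and $\sigma_k$ of the chain implies comparability with the intermediate elements.
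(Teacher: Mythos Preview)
Your argument is correct and takes a slightly more direct route than the paper's. The paper first establishes the case $X=\Delta^n$ (showing the intersection of closed stars in $(\Delta^n)'$ reduces to the single barycentre) and then uses this as a lemma for general $X$: intersecting with $\overline{St(b_\sigma)}$ for a maximal simplex $\sigma$ lands inside $\sigma'$, where the $\Delta^n$ computation forces $I\cap\sigma'\subseteq\{b_\sigma\}$; two distinct maximal simplices then give $I=\emptyset$. You instead go straight to the chain/comparability description of $I$ and test against the $0$-simplices $\{w\}$ of $X$ to force $V(X)\subseteq\sigma_0$, hence $X=\Delta^n$ in one stroke. Your route avoids the localisation to maximal simplices and handles both directions simultaneously; the paper's version is more modular but tersely stated.

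One small caution: your closing parenthetical, that ``comparability with the extremes $\sigma_0$ and $\sigma_k$ of the chain implies comparability with the intermediate elements'', is false in general (take the chain $\{1\}\subsetneq\{1,2\}\subsetneq\{1,2,3\}$ and $\tau=\{1,3\}$). Fortunately your main argument never invokes this claim---you only test $0$-simplices $\{w\}$ against $\sigma_0$---so the proof stands as written. What you presumably intended (and what you actually use earlier) is the correct equivalence ``$\{\tau\}\cup\{\sigma_0,\ldots,\sigma_k\}$ is totally ordered $\iff$ $\tau$ is comparable to each $\sigma_i$''; I would simply delete the misleading justification.
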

\begin{proof}
If $X = \Delta^n$, then the intersection of the closed stars of the vertices in $(\Delta^n)^\prime$ consists of a single point, the barycentre of $(\Delta^n)^\prime$. In particular, this implies that the intersections of the closed stars in a subdivision are composed by single points in the interior of each simplex in $X$; so if $X$ contains more than one simplex, by Theorem~\ref{thm:0dim} the $0$-degree \"uberhomology of $X$ vanishes. 
\end{proof}
\begin{rmk}\label{rmk:othersubdivisions}
The statement of Corollary~\ref{cor:subdivisiondegree0} can be extended to encompass other kinds of subdivisions (\emph{e.g.}~stellar or chromatic subdivisions --see \cite{kozlov2012chromatic}) with minor modifications, but for the sake of clarity we restricted to barycentric subdivisions. In the opposite direction, it is not hard to realise that the two minimal triangulations for $S^1\times S^1$ and $\R\mathbb{P}^2$ from Figure~\ref{fig:dalmatching} have non-trivial $\uH^0$; in both cases this is generated by the set of all vertices in the triangulation.
\end{rmk}
The \emph{diameter} $diam(X)$ of $X$ is the maximal geodesic distance among all pairs of vertices in the $1$-skeleton of $X$, endowed with the path metric.

\begin{cor}\label{cor:diameter}
If $X$ is a connected and finite simplicial complex, then  $$ diam(X) \ge 3 \Longrightarrow \uH^0(X) = 0. $$
\end{cor}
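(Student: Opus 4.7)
The plan is to combine Theorem~\ref{thm:0dim} with a short metric argument on the $1$-skeleton. By Theorem~\ref{thm:0dim}, the rank of $\uH^0(X)$ in each degree is determined by the number of simplices in the total intersection $\bigcap_{i=1}^m \overline{St(v_i)}$, so it suffices to prove that this intersection is empty whenever $\mathrm{diam}(X)\geq 3$.

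First I would recall that a simplex $\sigma$ belongs to $\overline{St(v)}$ precisely when every vertex of $\sigma$ is either $v$ itself or joined to $v$ by an edge, i.e.\ lies at distance at most $1$ from $v$ in the $1$-skeleton. Consequently, if $\sigma\in\overline{St(u)}\cap \overline{St(w)}$ for two vertices $u,w\in V(X)$, then picking any vertex $x$ of $\sigma$ gives $d(x,u)\leq 1$ and $d(x,w)\leq 1$, whence $d(u,w)\leq 2$ by the triangle inequality.

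Now suppose $\mathrm{diam}(X)\geq 3$ and choose vertices $u,w\in V(X)$ realising $d(u,w)\geq 3$. The observation above forces $\overline{St(u)}\cap\overline{St(w)}=\emptyset$, and therefore
\begin{equation*}
\bigcap_{i=1}^{m} \overline{St(v_i)} \subseteq \overline{St(u)}\cap\overline{St(w)} = \emptyset.
\end{equation*}
Theorem~\ref{thm:0dim} then gives $\uH^0_j(X)\cong \F^{n_j}_{(j,j+1)}$ with $n_j=0$ for every $j$, so $\uH^0(X)=0$ as required.

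There is no real obstacle here: everything reduces to the elementary fact that closed stars have ``radius one'' with respect to the path metric, so two closed stars centred at vertices at distance $\geq 3$ cannot share a simplex. The only thing to be slightly careful about is the convention for the distance and for $\overline{St(v)}$ (including the vertex $v$ itself and its link), both of which are fixed in the preliminaries.
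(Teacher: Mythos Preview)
Your argument is correct and follows exactly the paper's approach: invoke Theorem~\ref{thm:0dim} and observe that the closed stars of two vertices at distance at least three are disjoint, so the total intersection is empty. One minor caveat: the word ``precisely'' in your description of $\overline{St(v)}$ overstates things (the converse fails, e.g.\ the edge $\langle a,b\rangle$ in $\partial\Delta^2$ has both endpoints adjacent to $v$ but is not in $\overline{St(v)}$), though you only use the forward implication, which is valid, so the proof stands.
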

\begin{proof}
This is a direct consequence of Theorem~\ref{thm:0dim}, after noting that $$diam(X) \ge 3 \Longrightarrow \bigcap_{i=1}^m \overline{St(v_i)} = \emptyset,$$ since the closures of the stars of two vertices at distance at least three have trivial intersection.
\end{proof}

It is nonetheless not hard to find examples of simplicial complexes $X$ with diameter equal to two such that $\uH^0(X) = 0$. The idea is to find three or more vertices such that the intersection of the closure of their stars is trivial. One example is provided by the matching complex of $K_5$, the complete graph on five vertices (\emph{cf}. with Proposition~\ref{prop:uberofgraphs}).\\

As a further consequence of Theorem~\ref{thm:0dim} we can explicitly compute $\uH(\Delta^n)$:
\begin{cor}\label{cor:uberofsimplices}
If $n \ge 1$, then
$$\uH^0(\Delta^n) \cong \bigoplus_{k =0}^n \F^{\binom{n+1}{k+1}}_{(k,k+1)}.$$
\end{cor}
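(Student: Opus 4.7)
The plan is to invoke Theorem~\ref{thm:0dim} directly and to identify the intersection of all closed stars of vertices of $\Delta^n$. For the full $n$-simplex $\Delta^n$ with vertex set $V(\Delta^n) = \{v_1,\ldots,v_{n+1}\}$, every nonempty subset of $V(\Delta^n)$ spans a simplex, so for each vertex $v_i$ and each simplex $\sigma \subseteq \Delta^n$ the join $\sigma \cup \langle v_i\rangle$ is again a simplex of $\Delta^n$. Consequently $\sigma \in St(v_i)$ whenever $v_i \in \sigma$, and $\sigma \in lk(v_i)$ otherwise. This shows $\overline{St(v_i)} = \Delta^n$ for every $i$, so
\[
\bigcap_{i=1}^{n+1} \overline{St(v_i)} = \Delta^n.
\]

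Next I would count simplices by dimension. The $k$-dimensional faces of $\Delta^n$ are in bijection with $(k+1)$-element subsets of $V(\Delta^n)$, of which there are $\binom{n+1}{k+1}$. Plugging $n_k = \binom{n+1}{k+1}$ into the formula of Theorem~\ref{thm:0dim} yields $\uH^0_k(\Delta^n) \cong \F^{\binom{n+1}{k+1}}_{(k,k+1)}$, and summing over $k = 0, \ldots, n = dim(\Delta^n)$ gives the claimed direct sum decomposition.

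There is no real obstacle: the content of Theorem~\ref{thm:0dim} does all the work, and the only ingredient to check is the elementary fact that every vertex of the full simplex sees every face in its closed star. This also matches the explicit computation in Example~\ref{ex:1spx}, where $\uH^0(\Delta^1) \cong \F_{(0,1)} \oplus \F_{(0,1)} \oplus \F_{(1,2)}$, corresponding to $\binom{2}{1} = 2$ vertices and $\binom{2}{2} = 1$ edge.
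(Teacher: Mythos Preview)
Your proof is correct and follows essentially the same approach as the paper's: both invoke Theorem~\ref{thm:0dim}, observe that $\overline{St(v_i)} = \Delta^n$ for every vertex $v_i$ so that the intersection of all closed stars is the whole simplex, and then count the $k$-dimensional faces as $\binom{n+1}{k+1}$. Your version is slightly more detailed in justifying why $\overline{St(v_i)} = \Delta^n$ and in cross-checking against Example~\ref{ex:1spx}, but the argument is the same.
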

\begin{proof}
This follows immediately from the previous result, since for all $1\le i\le n+1$ we have $\overline{St(v_i)} = \Delta^n$; therefore $\uH^0(\Delta^n)$ is generated by all the faces of the simplex, and there are exactly $\binom{n+1}{k+1}$ $k$-dimensional faces in $\Delta^n$.
\end{proof}

It is actually possible to explicitly determine the entire \"ubercomplex and  \"uberhomology of the simplices, revealing the presence of some interesting structures: 
\begin{prop}\label{prop:uberofsimplices}
Let $j,k,n>0$, then $$\left( \ddot{C}^j_*(\Delta^n,k), d \right) \cong \bigoplus_{\substack{\varepsilon \in \Z_2^m\\ |\varepsilon| = j}} \bigoplus^{\binom{n+1 - j}{k}} \left( \widetilde{C}_*(\Delta^{j-1}), \partial \right).$$
If instead $k = 0$, then $$\left( \ddot{C}^j_*(\Delta^n,0), d\right) \cong \bigoplus_{\substack{\varepsilon \in \Z_2^m\\ |\varepsilon| = j}} \left( C_*(\Delta^{j-1}), \partial \right).$$
As a consequence, the only non-trivial \"uberhomology group (other than $\uH^0(\Delta^n)$ which we already covered in Corollary~\ref{cor:uberofsimplices}) is in homological degree $1$
$$\uH^1(\Delta^n) \cong \F_{(0,0)},$$
generated by $\sum_{i = 1}^{n+1} [v_i]_{\varepsilon_i}$. 
\end{prop}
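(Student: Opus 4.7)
The plan is to prove the chain-complex isomorphism directly at the level of a single colouring $\varepsilon$, take horizontal homology, and then identify the resulting über-complex (for $j\geq 1$) with a shifted cochain complex of $\Delta^n$. The proposition then follows from the contractibility of $\Delta^n$.

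First I would fix $\varepsilon$ with $|\varepsilon|=j$ and partition $V(\Delta^n)=[n+1]$ into the black set $B$ and the white set $W'$. Since every non-empty simplex $\sigma\subseteq\Delta^n$ is an arbitrary non-empty subset of $[n+1]$, it factors uniquely as $\sigma=W\sqcup B'$ with $W\subseteq W'$, $B'\subseteq B$, and $w_\varepsilon(\sigma)=|W|$. Because $\partial_h$ only removes black vertices, grouping simplices of fixed weight $k$ by their white part $W$ splits $C_*(\Delta^n,\varepsilon,k)$ as a direct sum indexed by the $\binom{n+1-j}{k}$ choices of $W$, where the $B'$-summand is the simplicial chain complex on the $j$-vertex simplex $\langle B\rangle\cong\Delta^{j-1}$. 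For $k>0$ the case $B'=\emptyset$ is allowed (giving $\sigma=W$), yielding $\widetilde{C}_*(\Delta^{j-1})$; for $k=0$ the non-emptiness of $\sigma$ forbids $B'=\emptyset$, giving $C_*(\Delta^{j-1})$. Summing over all $\varepsilon$ with $|\varepsilon|=j$ produces both isomorphisms in the statement.

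Taking horizontal homology, the reduced summands are acyclic, so $\HH^h(\Delta^n,\varepsilon,k)=0$ whenever $j,k>0$, while $\HH^h(\Delta^n,\varepsilon,0)\cong\HH_0(\Delta^{j-1})\cong\F$ sits in bi-degree $(0,0)$ and is represented by any black vertex (all black vertices being homologous, since the weight-$0$ edge $\langle v_i,v_j\rangle$ between two black vertices has $\partial_h=v_i+v_j$). Hence for $j\geq 1$ one has $\ddot{C}^j(\Delta^n)\cong\F\langle [S]\,:\,S\subseteq[n+1],\,|S|=j\rangle$, entirely in bi-degree $(0,0)$. I would then compute the über-differential: for the edge $\eta$ adding a vertex $v_p$ to the black set, a black representative $v_b$ ($b\in S$) stays black, so its weight is preserved and $d_\eta([S])=[S\cup\{p\}]$; summing over $p\notin S$ gives $d^j([S])=\sum_{p\notin S}[S\cup\{p\}]$.

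Under the identification $[S]\leftrightarrow e_S$ this is precisely the simplicial coboundary on $\Delta^n$ shifted by one, so $(\ddot{C}^{\geq 1}(\Delta^n),d)\cong (C^{*-1}(\Delta^n),\delta)$. Since $\Delta^n$ is contractible its cohomology is $\F$ concentrated in degree $0$, generated by the augmentation cocycle $\sum_i e_{\{i\}}$; translating back yields $\uH^1(\Delta^n)\cong\F$ with generator $\sum_{i=1}^{n+1}[v_i]_{\varepsilon_i}$ and $\uH^j(\Delta^n)=0$ for $j\geq 2$. To close off $\uH^0$, I would note that $d^0=0$ for bi-degree reasons: $\ddot{C}^0$ is supported in bi-degrees $(k,k+1)$ by Corollary~\ref{cor:uberofsimplices}, whereas $\ddot{C}^1$ lives in $(0,0)$, and $d$ preserves the bi-grading. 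The main obstacle is the delicate step of identifying $d^j$ with the simplicial coboundary: one must genuinely verify that $[v_b]_\varepsilon$ is a well-defined class independent of $b\in S$ and that $d_\eta$ acts on it as the filtered identity, which is exactly the sort of weight-preservation check codified in Lemma~\ref{lem:techinical}. Everything else is either a direct combinatorial decomposition or an appeal to the acyclicity of simplices.
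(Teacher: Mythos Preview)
Your argument is correct and follows the same strategy as the paper: decompose the horizontal complex of $(\Delta^n,\varepsilon)$ by the white part $W$ of each simplex into copies of the (reduced) chain complex on the $j$ black vertices, deduce that $\ddot{C}^{\geq 1}(\Delta^n)$ consists of a single $\F_{(0,0)}$ per nonempty black set $S$, and then identify the resulting \"uber-complex with a shifted simplicial complex. The only cosmetic difference is in this last identification---the paper describes it as the reduced chain complex of $\partial\Delta^n$, whereas you use the cochain complex of $\Delta^n$; these agree via the complementation $S\leftrightarrow[n+1]\setminus S$, and your explicit verification that $d_\eta([S])=[S\cup\{p\}]$ (and that $d^0=0$ for bi-degree reasons) is slightly more careful than the paper's treatment.
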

\begin{proof}
As $\Delta^n$ is completely symmetric, all the horizontal homologies $\HH^h(\Delta^n, \varepsilon)$
with the same  $|\varepsilon| = j$ are isomorphic, up to a permutation of labels for the vertices. We can thus restrict our considerations to the homologies $\HH^h(\Delta^n, \hat{\varepsilon}_j)$, where $\hat{\varepsilon}_j (i) = 1$ whenever $i\le j$.

For any fixed $j \in \{1, \ldots,m\}$ there are exactly $j$ black dots in $\hat{\varepsilon}_j$; therefore, for $0<k< m+1-j$, we can consider a subset $S \subseteq \{j+1, \ldots, n+1\}$ and the subcomplex $C(n,k,S) \subseteq \left( C(\Delta^n, \hat{\varepsilon}_j), \partial_h \right)$ spanned by simplices whose white vertices are in $S$. 
As the only white vertices of any simplex are in $S$, the filtration degree of $C(n,k,S)$ is $|S|$, and the lowest dimension degree is $|S|-1$. Note that $C(n,k,S)$ is indeed a subcomplex, as all horizontal differential preserve white vertices. Now, for any $S$ such that $|S| = p$, there is a poset isomorphism between $C(n,k,S)$ and the poset of subsets $S^\prime$ of $\{1, \ldots, j\}$ (including the empty set, that becomes the terminal object of the reduced complex, as in the proof of Theorem~\ref{thm:horizhomoftait}). One example is shown in Figure~\ref{fig:3spxdim}, where each subcomplex $C(n,k,S)$ is a direct summand in the horizontal complexes.

Each of the $C(n,k,S)$ is thus isomorphic to the chain complex computing the reduced homology of $\Delta^{j-1}$ (as $j$ is the maximal number of black vertices available). Furthermore, all of these subcomplexes $C(n,k,S)$ are in direct sum.

If instead we take $k$ to be $0$, we are computing the horizontal homology of the subcomplex of $C(X, \hat{\varepsilon}_j)$ consisting of simplices with all black vertices; this is clearly isomorphic to $\left(C_*(\Delta^{j-1}), \partial \right)$. 

\begin{figure}
\centering
\includegraphics[width=12cm]{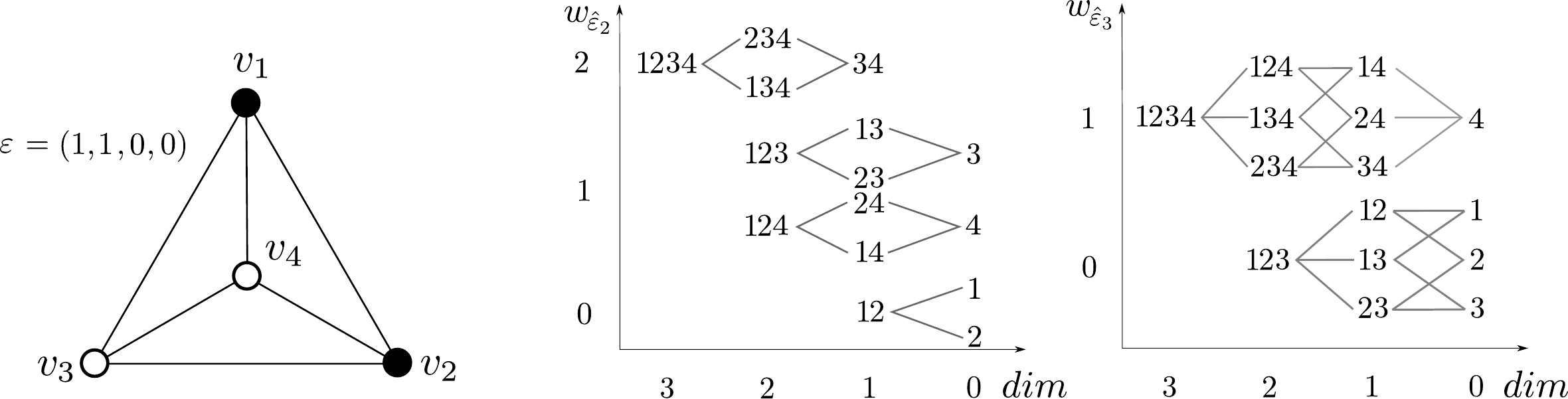}
\caption{From the left: $\Delta^3$ coloured with  $\hat{\varepsilon}_2$, and its horizontal complex. Note that on each row we can see the simplicial complex for the reduced homology of $\Delta^1$, except for the lowest row in which the complex is not reduced. On the right the horizontal complex of $(\Delta^3, \hat{\varepsilon}_3)$. 
}
\label{fig:3spxdim}
\end{figure}

Using this we can conclude: the only non-trivial elements in the horizontal homologies of $(\Delta^n, \hat{\varepsilon}_j)$ for $0<j\le n+1$ (and therefore the only elements in $\ddot{C}^j(\Delta^n)$) can be identified with the generators of the homology of $\Delta^{j-1}$ in filtration degree $0$. 
In particular, we have a single generator in bi-degree $(0,0)$ for each $\varepsilon$ with $|\varepsilon|>0$. So the \"ubercomplex is isomorphic to the reduced simplicial chain complex of the boundary of the $n$-simplex (here the terminal element comes from the generator of $\ddot{C}^{n+1}(\Delta^n) \cong \HH^h(\Delta^n) \cong \HH_*(\Delta^n)$). Hence, the homology is non-trivial only in degree $j = 1$, generated by $\sum_{p = 1}^{n+1} [v_i]_{\varepsilon_i}$ as required.
\end{proof}
\begin{cor}\label{cor:uberofspheres}
The \"uberhomology of the simplicial spheres $\partial \Delta^n$ are as follows:
\begin{equation}\label{eqn:sphere0dim}
\uH^0 (\partial\Delta^n) = \bigoplus_{p = 0}^{n-2} \F^{\binom{n+1}{p+1}}_{(p, p+1)},
\end{equation}

while for $0<j<n+1$ 
\begin{equation}\label{eqn:sphereotherdim}
\uH^j (\partial\Delta^n) =  \F^{\binom{n+1}{j}}_{(n-1, n+1-j)}.
\end{equation}
\end{cor}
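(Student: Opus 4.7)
The two equations are handled by separate mechanisms. For \eqref{eqn:sphere0dim}, the strategy is a direct application of Theorem~\ref{thm:0dim}. Every proper non-empty subset of $\{v_1,\ldots,v_{n+1}\}$ of size at most $n$ is a simplex of $\partial\Delta^n$, so a $p$-simplex $\sigma$ lies in $\overline{St(v_i)}$ for every $v_i \notin \sigma$ unless adding $v_i$ would force the entire $n$-simplex, which happens exactly when $|\sigma|=n$. Consequently $\bigcap_{i=1}^{n+1}\overline{St(v_i)}$ contains every $p$-simplex for $p \le n-2$ and no $(n-1)$-simplex, and Theorem~\ref{thm:0dim} identifies $\uH^0_p$ with the corresponding $\binom{n+1}{p+1}$ simplices in bi-degree $(p,p+1)$.

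For \eqref{eqn:sphereotherdim} the plan is to imitate the computation of Proposition~\ref{prop:uberofsimplices} while tracking the effect of deleting the top $n$-simplex from $\Delta^n$. By the $S_{n+1}$-symmetry it suffices to compute $\HH^h(\partial\Delta^n, \hat{\varepsilon}_j)$ and multiply by $\binom{n+1}{j}$. Applying the subcomplex decomposition $C(\Delta^n,\hat{\varepsilon}_j) = \bigoplus_{S} C(n,k,S)$ from that proof, the key observation is that the missing $n$-simplex lies in a single summand --- the one indexed by $S=\{v_{j+1},\ldots,v_{n+1}\}$ at filtration degree $k = n+1-j$, where under the identification with $\widetilde{C}_*(\Delta^{j-1})$ it corresponds to the top cell. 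Removing it therefore replaces that summand by $\widetilde{C}_*(\partial\Delta^{j-1})$ while leaving all other summands acyclic. Since $\widetilde{\HH}_*(\partial\Delta^{j-1}) \cong \widetilde{\HH}_*(S^{j-2})$ is $\F$ concentrated in degree $j-2$, this contributes exactly one new class to $\HH^h(\partial\Delta^n, \hat{\varepsilon}_j)$, placed in bi-degree $\bigl((n+1-j)+(j-2),\,n+1-j\bigr) = (n-1,\,n+1-j)$ once the filtration shift built into the decomposition is reinstated. Summing over colourings yields the $\binom{n+1}{j}$ expected generators of $\ddot{C}^j(\partial\Delta^n)$ in bi-degree $(n-1, n+1-j)$.

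The final step is to show that the überdifferential vanishes on this portion of $\ddot{C}^j$ in both directions, so that these generators survive to $\uH^j$. Running the same decomposition on the neighbouring colourings in the cube, Proposition~\ref{prop:uberofsimplices} governs the unmodified summands for every $|\varepsilon'| \ne j$, while a weight-versus-dimension count rules out a class of $\HH^h(\partial\Delta^n, \varepsilon')$ in bi-degree $(n-1,\,n+1-j)$ coming from the modified summand at $|\varepsilon'|= j\pm 1$. Thus the relevant portion of $\ddot{C}^*(\partial\Delta^n)$ is concentrated in homological degree $j$ with trivial differential, producing \eqref{eqn:sphereotherdim}. I expect the main obstacle to be precisely this last bi-degree bookkeeping: one must confirm that no class from a neighbouring colouring can sneak into $(n-1,\,n+1-j)$ and either kill or be killed by the generators just produced.
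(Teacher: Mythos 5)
Your route is the same as the paper's: Theorem~\ref{thm:0dim} for \eqref{eqn:sphere0dim}, and the $C(n,k,S)$ decomposition from Proposition~\ref{prop:uberofsimplices} applied to $\Delta^n$ minus its top cell for \eqref{eqn:sphereotherdim}. The first half is fine, and your identification of the extra class of $\HH^h(\partial\Delta^n,\varepsilon)$ in bi-degree $(n-1,n+1-j)$ for each $|\varepsilon|=j$ is correct. The gap sits exactly where you said the danger was, and your ``weight-versus-dimension count'' does not close it, because it only inspects the \emph{modified} summands of the neighbouring colourings and so tacitly assumes $j-1\ge 1$. At $j=1$ the incoming differential originates at $\varepsilon'=(0,\ldots,0)$, where $\HH^h(\partial\Delta^n,(0,\ldots,0))\cong C_*(\partial\Delta^n)$ contains every facet $\sigma=V\setminus\{v_i\}$ in bi-degree $(n-1,n)$, and $d^0(\sigma)=[\sigma]_{\varepsilon_i}$ is precisely the new generator at $\varepsilon_i$; since distinct facets omit distinct vertices, $d^0$ restricts to an isomorphism from $C_{n-1}(\partial\Delta^n)$ onto the span of these $n+1$ generators (this is the same injectivity that truncates \eqref{eqn:sphere0dim} at $p=n-2$). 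So all the classes you produce at $j=1$ are boundaries. A second omission: proving the stated \emph{equality} also requires disposing of the bi-degree $(0,0)$ column of $\ddot{C}^*(\partial\Delta^n)$, which you never discuss. It coincides with the one computed for $\Delta^n$ in Proposition~\ref{prop:uberofsimplices}, hence is acyclic for $j\ge 2$, but it contributes the surviving cycle $\sum_{i}[v_i]_{\varepsilon_i}$ in degree $1$.

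Together these two points show that \eqref{eqn:sphereotherdim} cannot be proved as stated at $j=1$: one gets $\uH^1(\partial\Delta^n)\cong\F_{(0,0)}$ rather than $\F^{n+1}_{(n-1,n)}$. (Sanity check for $n=2$: $\partial\Delta^2=L_3$, and $\uH^1_0(L_3,0)$ is the group $\mathbb{H}^0(L_3)\cong\F$ of Section~\ref{sec:ubergraphs}, which is nonzero, while \eqref{eqn:sphereotherdim} would predict nothing in bi-degree $(0,0)$.) For $2\le j\le n$ your argument is correct once you add the two missing observations: nothing in homological degree $j-1$ lives in bi-degree $(n-1,n+1-j)$, and the $(0,0)$ column is exact there. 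The paper's own proof has the same lacuna --- showing the new generators lie in $\ker d$ does not show they avoid $\mathrm{im}\,d$ --- so you have faithfully reproduced both the argument and its flaw; but be aware that the statement itself needs amending in degree $j=1$.
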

\begin{proof}
This is a consequence of Theorem~\ref{thm:0dim} and the computations in the proof of Proposition~\ref{prop:uberofsimplices}. For the \"uberhomology in degree $0$ just observe that the intersection of the closed stars of vertices in $\partial\Delta^n$ consists of its $(n-1)$-skeleton. These simplices are those appearing in Equation~\eqref{eqn:sphere0dim}.

For the remaining part, we only need to perform the same computation as in the proof of Proposition~\ref{prop:uberofsimplices}, after removing the top dimensional simplex. This removal has the effect of adding a generator in filtration degree $j$ in each horizontal homology. More precisely, if $|\varepsilon|=j$, this new generator in $\HH^h(\partial \Delta^n, \varepsilon)$ is given by the sum of the maximal simplices in $\partial \Delta^n$ containing all the white vertices. All these simplices are in bi-degree $(n-1,n+1-j)$, and there are $\binom{n+1}{j}$ of them for each fixed $j$, one for each $\varepsilon$ with $|\varepsilon| = j$. 
From this description it is apparent that all these generators are in the kernel of any component $d_\eta$ of the differential starting at $\varepsilon$.
\end{proof}

\begin{rmk}
Using Theorem \ref{thm:0dim}, we can provide an example of a pair of distinct simplicial complexes with isomorphic \"uberhomologies in degree $0$, namely the $1$-simplex $\Delta^1$ and the union of two $2$-simplices attached along a codimension one face. 

The same holds for any number of simplices of the same dimension glued along a common codimension $1$ face; hence we can also easily produce an infinite number of simplicial complexes with the same non-trivial $\uH^0$ and an arbitrary number of vertices.
\end{rmk} 

It is generally harder to give an a priori description of the top dimensional \"uberhomology groups; using the following results, it is nonetheless possible to say something meaningful in some favourable cases. Recall that $\overline{\varepsilon_i}$ denotes the colouring complementary to $\varepsilon_i$.
\begin{prop}\label{prop:almosttop}
Let $X$ be a finite simplicial complex with vertices $V(X) = \{v_1, \ldots, v_m\}$. Then there are isomorphisms
\begin{equation*}
\HH^h_*(X, \overline{\varepsilon_i},k)    \cong \begin{cases}
\widetilde{\HH}_{*-1}(lk(v_i)) & \text{ if } k = 1\\
\HH_*(X \setminus St(v_i)) & \text{ if }  k = 0,
\end{cases}
\end{equation*}
and these are the only non-trivial groups. 

In particular, if $X$ is a triangulation of a closed $n$-dimensional manifold, then 
\begin{equation*}
\HH^h_*(X, \overline{\varepsilon_i},k)    \cong \begin{cases}
\widetilde{\HH}_*(S^{n-1}) \cong \F_{(n-1, 1)} & \text{ if } $k = 1$\\
\HH_*(|X| \setminus B^n) & \text{ if }  $k = 0$
\end{cases}
\end{equation*}
where $B^n$ is an open ball.
\end{prop}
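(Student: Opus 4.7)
The key observation is that $\overline{\varepsilon_i}$ has $v_i$ as its unique white vertex, so the weight $w_{\overline{\varepsilon_i}}(\sigma) \in \{0,1\}$ depending on whether $v_i \notin \sigma$ or $v_i \in \sigma$. Consequently the horizontal chain complex splits as a direct sum
\begin{equation*}
C_*(X, \overline{\varepsilon_i}) = C_*(X, \overline{\varepsilon_i}, 0) \oplus C_*(X, \overline{\varepsilon_i}, 1),
\end{equation*}
and these are the only potentially non-trivial filtration degrees for $\HH^h$, so it suffices to analyse each summand separately.

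For $k=0$, I would observe that $C_*(X, \overline{\varepsilon_i}, 0)$ is generated by the simplices that avoid $v_i$. Since every face of such a simplex also avoids $v_i$, the weight is preserved under taking faces, and so $\partial_h$ coincides with the full simplicial differential $\partial$. Moreover, the simplices avoiding $v_i$ are precisely those forming the subcomplex $X \setminus St(v_i)$, giving the identification $\HH^h_*(X, \overline{\varepsilon_i}, 0) \cong \HH_*(X \setminus St(v_i))$ directly from the definitions.

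For $k=1$, the generators of $C_*(X, \overline{\varepsilon_i}, 1)$ are precisely the simplices $\langle v_i \rangle \cup \tau$ for $\tau \in lk(v_i) \cup \{\emptyset\}$. The horizontal boundary $\partial_h(\langle v_i\rangle \cup \tau)$ collects only the codimension-one faces still containing $v_i$, i.e.\ those obtained by removing a vertex of $\tau$. I would define a map
\begin{equation*}
\phi\colon C_*(X, \overline{\varepsilon_i}, 1) \longrightarrow \widetilde{C}_{*-1}(lk(v_i)), \qquad \langle v_i\rangle \cup \tau \longmapsto \tau,
\end{equation*}
with the convention that $\langle v_i \rangle$ itself (corresponding to $\tau = \emptyset$) maps to the basepoint generator $\star$ in reduced degree $-1$. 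A routine comparison of boundary formulae shows that $\phi$ is an isomorphism of chain complexes intertwining $\partial_h$ with the reduced simplicial differential on $lk(v_i)$, yielding $\HH^h_*(X, \overline{\varepsilon_i}, 1) \cong \widetilde{\HH}_{*-1}(lk(v_i))$.

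For the manifold statement, recall that when $X$ triangulates a closed $n$-manifold, the link of any vertex is a triangulated $(n-1)$-sphere, so $\widetilde{\HH}_*(lk(v_i)) \cong \widetilde{\HH}_*(S^{n-1})$ is concentrated in degree $n-1$. For $k=0$, the geometric realisation $|X \setminus St(v_i)|$ is $|X|$ with the open star of $v_i$ removed; since $|\overline{St(v_i)}|$ is a closed $n$-ball (as a cone on $S^{n-1}$), this complement is homotopy equivalent to $|X|$ with an open ball removed, giving $\HH_*(|X| \setminus B^n)$. The only delicate bookkeeping is the reduced-homology degree shift in the $k=1$ case, where one must correctly match $\langle v_i\rangle$ with $\star$ and verify the chain-map property on $1$-simplices containing $v_i$; everything else reduces to unravelling the definitions.
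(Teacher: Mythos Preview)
Your proof is correct and follows essentially the same route as the paper: both arguments split into the $k=0$ and $k=1$ pieces, identify the $k=0$ complex with $C_*(X\setminus St(v_i))$ via the observation that $\partial_h=\partial$ there, and set up the same bijection $\langle v_i\rangle\cup\tau \leftrightarrow \tau$ (with $\langle v_i\rangle\leftrightarrow\star$) to identify the $k=1$ complex with $\widetilde{C}_{*-1}(lk(v_i))$. Your write-up is in fact slightly more explicit about the chain-map verification and the degree shift than the paper's version.
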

\begin{proof}
Let us start by observing that for the colourings $\overline{\varepsilon_i}$ the only two filtration degrees in which $\HH^h$ can be supported are $0$ and $1$. 
Moreover, the complex in $w_{\overline{\varepsilon_i}}$ degree $1$ is generated by simplices in $C(X)$ that contain the vertex $v_i$. This complex is isomorphic to the complex for $lk(v_i)$; more formally, there is a bijection between $C(lk(v_i))$ and $C(X, \overline{\varepsilon_i}, 1)$ taking the simplex $\sigma \subseteq lk(v_i)$ to $\langle V(\sigma), v_i \rangle$, and the empty simplex to $\langle v_i\rangle$ (see Figure~\ref{fig:almostallblack}).

\begin{figure}
\centering
\includegraphics[width = 9cm]{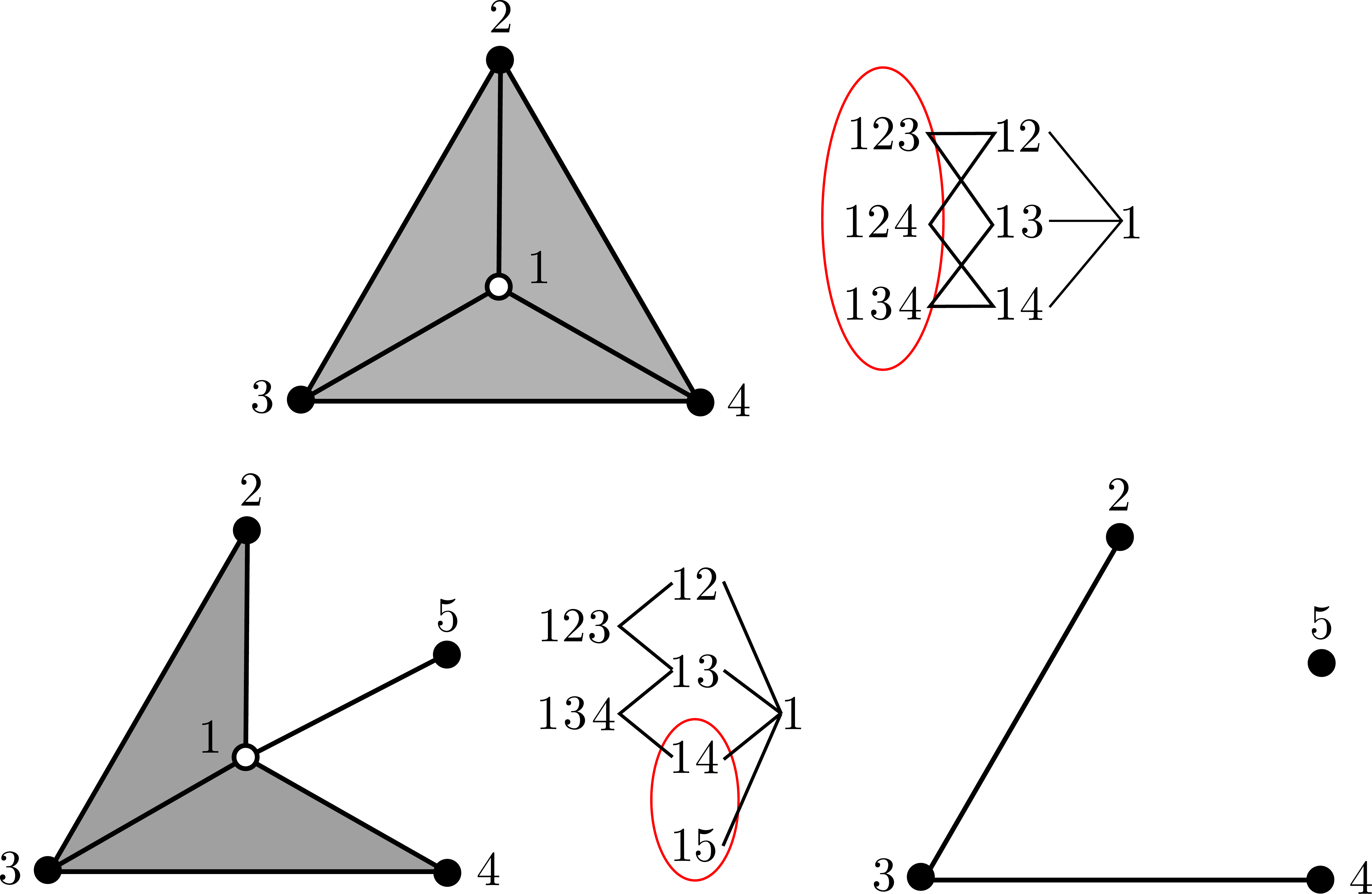}
\caption{On the top the simplicial complex obtained by gluing $3$ simplices of dimension two, with colouring $(0,1,1,1))$, and the horizontal chain complex in filtration degree $1$. Generators circled in red sum up to nontrivial generators of $\HH^h$. Similarly, below for the complex obtained by attaching two $2$ simplices and one $1$ simplex. On the right the link of the unique white vertex. Consistently with Proposition~\ref{prop:almosttop} in both case we are computing $\widetilde{\HH}(lk(v_1))$.}
\label{fig:almostallblack}
\end{figure}
In filtration level $0$ instead we find all the simplices that do not contain $v_i$, which are precisely those in  $X \setminus St(v_i)$. Since there are no white vertices at play here, the horizontal differential reduces to the simplicial differential on $X \setminus St(v_i)$. 

The second part of the proposition follows by recalling that for a homology manifold $lk(v_i)$ is by definition a sphere.
\end{proof}

We can now prove that the top-dimensional degree of the \"uberhomology of homology manifolds is a topological invariant.
\begin{thm}\label{thm:topdim}
If $X$ is a $m$-vertex triangulation of a closed connected $n$-manifold, then $$  \uH^m (X) \cong \F_{(n,0)},$$ generated by the fundamental class of $|X|$.
\end{thm}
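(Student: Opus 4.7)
The plan is to compute $\uH^m(X)$ directly, by analysing the top slice of the \"ubercomplex. First, Equation~\eqref{eqn:oneepsilon} gives $\ddot{C}^m(X) = \HH^h(X,(1,\ldots,1)) \cong \HH_*(X)$, concentrated entirely in filtration $k=0$ (since all vertices are black). Because $\ddot{C}^{m+1}(X) = 0$ we have $d^m = 0$, so
\[
\uH^m(X) \;\cong\; \HH_*(X)\big/\operatorname{im}(d^{m-1}),
\]
which already forces $\uH^m(X,k)=0$ for $k \neq 0$.

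Next I would unwind $\ddot{C}^{m-1}(X) = \bigoplus_{i=1}^m \HH^h(X, \overline{\varepsilon_i})$ via Proposition~\ref{prop:almosttop}: for a closed triangulated $n$-manifold each summand lives in $k \in \{0,1\}$, with $\HH^h_*(X,\overline{\varepsilon_i},1) \cong \F_{(n-1,1)}$ and $\HH^h_*(X,\overline{\varepsilon_i},0) \cong \HH_*(X \setminus St(v_i))$. Since $d_{\eta_i}$ preserves the $w$-grading and the target sits in filtration $0$, the $k=1$ part contributes trivially. Concretely, every chain representing a class in $\HH^h(X,\overline{\varepsilon_i},1)$ contains $v_i$ (the unique white vertex), so when $v_i$ turns black along $\eta_i$ its weight drops and $d_{\eta_i}$ kills it; meanwhile classes in $\HH^h(X,\overline{\varepsilon_i},0)$ are represented by chains not containing $v_i$, whose weight is preserved, and a direct inspection of Equation~\eqref{eqn:deta} shows that the induced map is exactly the subcomplex-inclusion map
\[
\iota_{i*}: \HH_*(X \setminus St(v_i)) \longrightarrow \HH_*(X).
\]

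The core computation is then to determine $\operatorname{im}\!\bigl(\bigoplus_i \iota_{i*}\bigr)$ in each dimension. I would use the long exact sequence of the pair $(X, X\setminus St(v_i))$ together with excision of the open star to identify
\[
\HH_*(X, X\setminus St(v_i)) \;\cong\; \HH_*(\overline{St(v_i)}, lk(v_i)) \;\cong\; \HH_*(B^n, S^{n-1}),
\]
which is $\F$ in degree $n$ and $0$ otherwise. Exactness then gives, for $i < n$, that $\iota_{i*}$ is surjective on $\HH_i$, so $\uH^m_i(X,0)=0$ in those degrees. In degree $n$, the map $\HH_n(X) \to \HH_n(X, X\setminus St(v_i)) \cong \F$ sends the fundamental class $[X]$ to a generator, hence is an isomorphism; exactness now forces $\operatorname{im}(\iota_{i*}) = 0$ in top degree, so $\uH^m_n(X,0) \cong \HH_n(X) \cong \F$, generated by $[X]$ in tri-degree $(m,n,0)$.

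The main delicate point is the final step: verifying that $[X]$ maps to a generator of the local relative group. This is the standard local-to-global statement for the mod-$2$ fundamental class of a connected closed $n$-manifold --- the map $\HH_n(X;\F) \to \HH_n(X, X\setminus\{x\};\F)$ is an isomorphism for every $x \in |X|$, applied to $x=v_i$ through the excision equivalence. Everything else is formal bookkeeping using Proposition~\ref{prop:almosttop} and the excision long exact sequence.
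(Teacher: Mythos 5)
Your proposal is correct and follows essentially the same route as the paper: observe that $d^m=0$ so $\uH^m(X)=\HH_*(X)/\operatorname{im}(d^{m-1})$, use Proposition~\ref{prop:almosttop} to identify the filtration-$0$ part of each $\HH^h(X,\overline{\varepsilon_i})$ with $\HH_*(X\setminus St(v_i))$ and discard the filtration-$1$ part, and then conclude that the inclusion-induced maps are surjective below degree $n$ and zero in degree $n$. The only difference is that you spell out the excision/long-exact-sequence justification for that last step, which the paper states without proof.
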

\begin{proof}
Since $d^m$ is the $0$ map,  $\uH^m (X)$ is just $\HH_n(X)$, modulo the union of images of $d_{\overline{\eta}_i}$. Recall that $\overline{\eta}_i$ and $\overline{\varepsilon}_i$ are the edge (respectively vertex) of the cube whose only $*$ (respectively $0$) component is in the $i$-th position. Moreover, all the groups $\HH^h_*(X, \overline{\varepsilon}_i)$ are contained in only two filtration levels, $0$ and $1$. By Proposition~\ref{prop:almosttop}, the image of $d_{\overline{\eta}_i}$ is isomorphic to $\HH_*(X \setminus B^n) \cong \HH_*(X)$ for $*<n$, and $0$ in homological degree $n$. Hence the only homology class in $\HH^h(X, (1,\ldots,1)) \cong \HH (X)$ that does not belong to the image of $d^{m-1}$ is $[X] \in \HH_n(X)$, the fundamental class of $|X|$.
\end{proof}

This next result implies that the \"uberhomology in its extremal degrees is well-behaved under the operation of taking the cone or suspension.
Let us denote by $Cone(X)$ and $\Sigma(X)$ the cone and suspension of the simplicial complex $X$, endowed with the obvious simplicial structure induced by $X$. By convention, the cone and suspension of the empty set are given by one and two points respectively. Recall that the cone of any simplicial complex is contractible, and that $\widetilde{\HH}_i(X) \cong \widetilde{\HH}_{i+1}(\Sigma (X))$. 

In the next result we will slightly abuse the notation (consistently with Theorem~\ref{thm:0dim}), denoting by $Cone(\uH^0(X))$ the free group generated over $\F$ by the simplices in the cone of $\bigcap_{i=1}^m \overline{St(v_i)}$.

\begin{prop}\label{prop:cone}
Let $X$ be a simplicial complex with $m$ vertices. Then 
\begin{equation}\label{eqn:cone}
\uH^{m+1}(Cone(X)) = 0, \;\; \uH^0(Cone(X)) \cong 
Cone(\uH^0(X)).
\end{equation}

For the suspension instead we have 
\begin{equation}\label{eqn:suspension}
\uH^0(\Sigma(X)) \cong \uH^0(X)
\end{equation}
and if furthermore $X$ is the triangulation of a closed $n$-dimensional manifold,
\begin{equation}
\uH^{m+2}(\Sigma(X)) \cong \uH^{m}(X)  \cong \F_{(n+1,0)}.
\end{equation}
\end{prop}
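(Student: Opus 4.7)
The proposition comprises four assertions: two about $\uH^0$ and two about the top-degree \"uberhomology. The two $\uH^0$ statements will follow from Theorem~\ref{thm:0dim} after computing the intersection of closed stars in $Cone(X)$ and $\Sigma(X)$, while the top-degree statements follow from the identification $\HH^h(X', (1,\ldots,1)) \cong \HH_*(X')$ combined with the description of $\HH^h(X', \overline{\varepsilon_v})$ provided by Proposition~\ref{prop:almosttop}.

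For the $\uH^0$ claims, write $w$ for the apex of $Cone(X)$ and $w_\pm$ for the apexes of $\Sigma(X)$. A direct inspection of which simplices contain a vertex $v_i \in V(X)$ yields
\[
\overline{St(v_i)}^{Cone(X)} = Cone\bigl(\overline{St(v_i)}^X\bigr), \qquad \overline{St(v_i)}^{\Sigma(X)} = \Sigma\bigl(\overline{St(v_i)}^X\bigr),
\]
while trivially $\overline{St(w)} = Cone(X)$ and $\overline{St(w_\pm)} = Cone_\pm(X)$. Using that coning and suspending commute with intersections sharing a common apex, and that $Cone_+(X)\cap Cone_-(X) = X$ inside $\Sigma(X)$, one obtains
\[
\bigcap_{v \in V(Cone(X))} \overline{St(v)} = Cone\Bigl(\bigcap_{i=1}^{m} \overline{St(v_i)}^X\Bigr), \quad \bigcap_{v \in V(\Sigma(X))} \overline{St(v)} = \bigcap_{i=1}^{m} \overline{St(v_i)}^X.
\]
Applying Theorem~\ref{thm:0dim} on both sides then delivers Equations~\eqref{eqn:cone} and~\eqref{eqn:suspension}.

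For the top-degree assertions, $\uH^{|V(X')|}(X')$ is by construction the cokernel of $d^{|V(X')|-1}$ inside $\HH^h(X', (1,\ldots,1))\cong \HH_*(X')$, with the target concentrated in filtration degree $0$. Proposition~\ref{prop:almosttop} identifies the image of $d_{\overline{\eta}_v}$ with the inclusion-induced map $\HH_*(X' \setminus St(v)) \to \HH_*(X')$. For $X' = Cone(X)$ and $v = w$, one has $Cone(X) \setminus St(w) = X$, whose $\HH_0$ surjects onto $\HH_0(Cone(X)) = \F$; since $Cone(X)$ is contractible this is the only non-trivial group, so $\uH^{m+1}(Cone(X)) = 0$.

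The suspension case is the main technical step. When $X$ triangulates a closed $n$-manifold, $\HH_i(\Sigma(X)) \cong \widetilde{\HH}_{i-1}(X)$, so $\HH_{n+1}(\Sigma(X)) = \F$ is generated by the suspended fundamental class. The apex vertices $w_\pm$ contribute via $\HH_*(Cone_\mp(X)) = \F$ concentrated in degree $0$, hence touch only $\HH_0$. For each original vertex $v_i$, a direct simplicial verification gives $\Sigma(X) \setminus St(v_i) = \Sigma(X \setminus St(v_i))$, and since $St(v_i)^X$ is the open star of a vertex in a closed manifold, $X \setminus St(v_i) \simeq X \setminus \{pt\}$. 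The long exact sequence of the pair $(X, X \setminus \{pt\})$ combined with excision then yields $\HH_n(X\setminus\{pt\}) = 0$ and $\HH_j(X \setminus\{pt\}) \cong \HH_j(X)$ for $j < n$; suspending shifts gradings by one, so the inclusion $\Sigma(X\setminus St(v_i)) \hookrightarrow \Sigma(X)$ is an isomorphism on $\HH_j$ for $j \le n$ and vanishes on $\HH_{n+1}$. Collecting all contributions, the image of $d^{m+1}$ saturates $\HH_{\le n}(\Sigma(X))$ but misses $\HH_{n+1}(\Sigma(X)) = \F$, yielding $\uH^{m+2}(\Sigma(X)) \cong \F_{(n+1,0)}$, in agreement with $\uH^m(X) \cong \F$ from Theorem~\ref{thm:topdim}. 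The delicate step is verifying that the image in the top degree misses exactly this one copy of $\F$, which hinges on the closed-manifold hypothesis through the vanishing of $\HH_n(X \setminus \{pt\})$.
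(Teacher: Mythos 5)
Your proof is correct and follows essentially the same route as the paper: the two $\uH^0$ statements come from Theorem~\ref{thm:0dim} after identifying the closed stars in $Cone(X)$ and $\Sigma(X)$, and the vanishing of $\uH^{m+1}(Cone(X))$ comes from Proposition~\ref{prop:almosttop} applied to the apex colouring together with the contractibility of the cone. The one place you go beyond the paper is the final claim: the paper simply cites Theorem~\ref{thm:topdim}, which does not literally apply since $\Sigma(X)$ need not be a homology manifold (the link of an apex is $X$ itself), whereas you rerun the argument of that theorem by hand, using Proposition~\ref{prop:almosttop} to check that the images of the maps $\HH_*(\Sigma(X)\setminus St(v))\to\HH_*(\Sigma(X))$ exhaust everything below degree $n+1$ and vanish on $\HH_{n+1}$. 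This is the careful version of what the paper leaves implicit, and it is the part of your writeup most worth keeping.
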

\begin{proof}
We first prove the two statements in Equation~\eqref{eqn:cone}, starting from the right one. Call $\hat{v}$ the cone vertex; clearly $\overline{St(\hat{v})} = Cone(X)$, while for all other vertices $v$,  $\overline{St(v)}_{Cone(X)} = Cone(\overline{St(v)}_X)$ (here the subscripts are used to distinguish the ambient space of the star). The result then follows from Theorem~\ref{thm:0dim}, after noting that the cone vertex is contained in the closed star of any other vertex.
To prove the vanishing of $\uH^{m+1}(Cone(X))$, using Theorem~\ref{prop:almosttop} we only need to look at the subcomplex of $\HH^h(Cone(X), \overline{\varepsilon_i})$ in filtration level $0$ (as these are the only ones contributing to the image of the differential $d^{m-1}$). It is immediate to realise that if $\varepsilon$ is the colouring where the only white vertex is $\hat{v}$, then  $\HH^h_*(Cone(X), \varepsilon,0)$ is isomorphic to $\HH_*(X)$; in particular the corresponding map $d_\eta$ is a surjection on $\HH^h_*(Cone(X), (1,\ldots, 1)) \cong \HH_*(Cone(X)) \cong \F$ in homological degree $0$, as the cone is contractible.

Now for the statement in Equation~\eqref{eqn:suspension}, simply observe that, if $\hat{v}$ and  $\hat{w}$ are the two cone points in $\Sigma(X)$, we have $\overline{St(\hat{v})} \cap \overline{St(\hat{w})} = X$. The results then follow directly from Theorem~\ref{thm:0dim}, while the last statement follows from Theorem~\ref{thm:topdim}.
\end{proof}

\section{\"Uberhomology and graphs}\label{sec:ubergraphs}

In this section we specialise the definition of the \"uberhomology to connected graphs.
In order for this to make sense, we will of course only consider simple graphs, \emph{i.e.}~$1$-dimensional simplicial complexes. By further specialising to single bi-gradings (as in Remark~\ref{rmk:splitting_degrees}) we will obtain four distinct homology theories which we denote by $\mathbb{H}^0(G), \mathbb{H}^1_0(G), \mathbb{H}^1_1(G)$ and $\mathbb{H}^2 (G)$. These homologies exhibit qualitatively different behaviours, and might be of independent interest. We point out that the results in this section show that the information contained in these homology groups is not related to the one obtained in Section~\ref{sec:graphdiss} by collecting the horizontal homologies of a graph in the $\Theta$ invariants. \\

Let us begin by examining the simplest of these homologies, $\mathbb{H}^0(G)$, defined as the \"uberhomology of $G$ in bi-degree (dimension of simplices, $\varepsilon$-weight) =  $(0,0)$; recall the structure theorems for the horizontal homology of a graph from Section~\ref{sec:graphdiss} (in the discussion around Equation~\eqref{eqn:horgraph}); note that the restriction of the maps $d_{\eta}$ to $\HH^h_1(G, \varepsilon , 0)$ are all injective (as adding further black points can only increase the rank of $\HH_1(Bl(G,\varepsilon))$) thus  $\uH_*(G, 0)$ is a singly-graded homology, which we'll denote by $\mathbb{H}^0(G)$.
The generators in the complex of $\mathbb{H}^0(G)$ are the connected components of $Bl(G,\varepsilon)$ (see Figure~\ref{fig:H0path}).

We can give an equivalent definition of $\mathbb{H}^0(G)$ that does not involve any filtration-preserving differential $d$; given $G$, 
consider the poset of subsets of its vertices, ordered by inclusion. To each pair of comparable subsets differing by exactly one element --corresponding to an inclusion $Bl(G,\varepsilon) \subset Bl(G ,\varepsilon^\prime)$-- we associate the map $$(\iota_\eta)_* : \F\langle \pi_0(Bl(G,\varepsilon)) \rangle \longrightarrow \F\langle\pi_0(Bl(G,\varepsilon^\prime)) \rangle $$ induced by the inclusion $\iota_\eta$ of $\pi_0(Bl(G,\varepsilon))$ in $\pi_0(Bl(G,\varepsilon^\prime))$ (here $\pi_0$ is the set of connected components).
Then for $j = 1, \ldots, m$ call $$\mathcal{C}^j (G) = \bigoplus_{\substack{\varepsilon \in \Z_2^m \\ |\varepsilon| = j}} \F \langle \pi_0 (Bl(G, \varepsilon)) \rangle, \text{ and } \mathbb{H}^0(G) = \mathrm{H}_*\left(\mathcal{C}^j (G),\bigoplus_{|\eta| = j} (\iota_{\eta})_*\right).$$

The identification of $\mathbb{H}^0(G)$ with the \"uberhomology of $G$ in bi-degree $(0,0)$ can be sketched as follows; by definition, $\uH^*_{0}(G,0)$ is generated by horizontal $0$-homology classes spanned by black vertices for varying bi-colourings of $G$. Therefore $\mathcal{C}^j(G)$ can be identified with the submodule of $\ddot{C}^j (G)$ generated by connected components of black subgraphs. Furthermore, as the filtration degree can not decrease (being already minimal), the differentials in the two complexes agree.

\begin{prop}
If $G = K_m$, then $\mathbb{H}^0(K_m) \cong \F_{(1)}$, generated by the sum of all the subgraphs consisting of a unique black vertex.
\end{prop}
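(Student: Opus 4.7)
Proof proposal.

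The plan is to reduce the computation of $\mathbb{H}^0(K_m)$ to the cohomology of a standard simplex via a tautological identification. The first step is to observe that every induced subgraph of $K_m$ on a non-empty vertex subset is itself complete, hence connected. Therefore, for every colouring $\varepsilon$ with $|\varepsilon|=j\geq 1$, $Bl(K_m,\varepsilon)$ is connected, so
\[
\F\langle \pi_0(Bl(K_m,\varepsilon))\rangle \cong \F,
\]
and consequently $\mathcal{C}^j(K_m)$ is free on the set $\{\,[\varepsilon] : |\varepsilon|=j\,\}$, of rank $\binom{m}{j}$. Under the inclusion-induced map $(\iota_\eta)_*$, the unique connected component of $Bl(K_m,\varepsilon)$ is sent to the unique connected component of $Bl(K_m,\varepsilon')$, so the differential simply reads
\[
d^j[\varepsilon]=\sum_{\varepsilon'\supset \varepsilon,\; |\varepsilon'|=j+1}[\varepsilon'].
\]

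The second step is to identify $(\mathcal{C}^*(K_m), d^*)$ with a shift of the (unreduced) simplicial cochain complex of $\Delta^{m-1}$ over $\F$. Sending $\varepsilon \in \Z_2^m$ with $|\varepsilon|=j$ to the $(j-1)$-simplex of $\Delta^{m-1}$ spanned by the indices where $\varepsilon$ equals $1$, one sees that $d^j$ corresponds precisely to the simplicial coboundary $\delta^{j-1}\colon C^{j-1}(\Delta^{m-1})\to C^{j}(\Delta^{m-1})$, which sends the dual of a simplex to the sum of the duals of its cofaces.

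The third step is a standard computation: since $\Delta^{m-1}$ is contractible, its unreduced simplicial cohomology with $\F$-coefficients is concentrated in degree $0$, where it is generated by the sum of the duals of all vertices (equivalently, the constant cochain, whose coboundary on any edge $(i,j)$ vanishes because $1+1=0$ in $\F$). Pulling this back through the identification above gives $\mathbb{H}^0(K_m)\cong \F$, concentrated in homological degree $j=1$ and generated by $\sum_{i=1}^m [v_i]_{\varepsilon_i}$, as claimed.

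No genuine obstacle is expected: the argument is essentially a bookkeeping exercise, so the main care required is to make sure that the bijection between weight-$j$ colourings and $(j-1)$-faces of $\Delta^{m-1}$ really does intertwine $d^j$ with $\delta^{j-1}$, and that the non-reduced cochain complex (rather than the reduced one) is the correct target, since $\mathcal{C}^j$ is defined for $j\geq 1$ with no term in degree $0$.
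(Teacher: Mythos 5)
Your proof is correct and follows essentially the same route as the paper: identify each $\mathcal{C}^j(K_m)$ with the span of the weight-$j$ colourings (connectedness of every induced subgraph of $K_m$), recognise the resulting complex as the unreduced simplicial cochain complex of a simplex, and read off the cohomology of a contractible space. Your version is in fact slightly more careful about the dimension ($\Delta^{m-1}$ rather than the paper's ``$m$-simplex'') and about the degree shift, but the idea is identical.
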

\begin{proof}
Note that for every $\varepsilon$, the black subgraph $Bl(G, \varepsilon)$ is connected; hence the chain complex looks like a $m$-cube (as in Figure~\ref{fig:cubo}) with a copy of $\F$ on each vertex, except the vertex corresponding to $\varepsilon = (0, \ldots, 0) $ which is removed, since we have $Bl(G,(0, \ldots, 0)) = \emptyset$ (therefore the complex is isomorphic to the simplicial chain complex computing the cohomology of the $m$-simplex). Each remaining vertex is endowed with a copy of $\F$ and the edges are all isomorphisms. The homology of this complex is generated by the sum of all elements in the lowest non-trivial degree, which consists of the sum of the connected components of the subgraphs induced by elementary colourings.
\end{proof}

\begin{figure}[ht]
\centering
\includegraphics[width = 12cm]{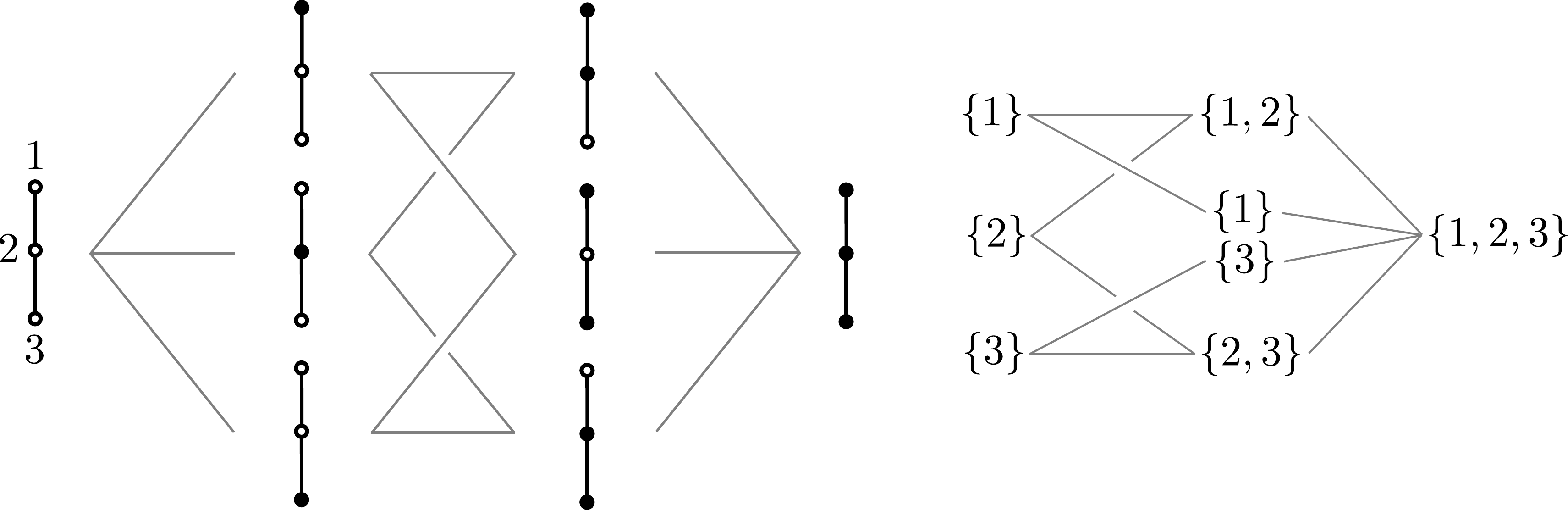}
\caption{The poset structure for the length $2$ path $P_2$, together with the complex whose homology is $\mathbb{H}^0(P_2) = 0$. Brackets denote connected components.}
\label{fig:H0path}
\end{figure}

A Sage program computing $\mathbb{H}^0 (G)$ is available at \cite{miogithub}
.
With this program it is possible to verify that, for the graphs $G_1, G_2$ from Figure~\ref{fig:twographs}, we get $\mathbb{H}^0(G_1) \cong \F_{(2)}$ and $\mathbb{H}^0(G_2) \cong \F_{(1)}$. Therefore, somewhat surprisingly, $\mathbb{H}^0$ can pick up some subtle information about graphs.

Despite being extremely simple, this homology exhibits some unexpected behaviours; in what follows, $Cube(n)$ denotes the $1$-skeleton of the $n$-dimensional cube, $Grid(n,m)$ is the plane square grid of size $(m,n)$, and $K_{m,n}$ is the complete bipartite graph on $(m,n)$ vertices.

\begin{equation*}
\mathbb{H}^0(Cube(n)) = \begin{cases}
\F_{(2)} & \text{ if } n=2\\
\F_{(4)}^3 & \text{ if } n=3\\
\F_{(8)}^{21} & \text{ if } n=4
\end{cases}
\end{equation*}

The only pairs $(m,n)$ with $mn \le 16$ such that $\mathbb{H}^0(Grid(n))$ is non-trivial are the following ones:
\begin{equation*}
\mathbb{H}^0(Grid(m,n)) = \begin{cases}
\F_{(0)} & \text{ if } \{m,n\} = \{1,2\}\\
\F_{(1)} & \text{ if } \{m,n\} = \{2,2\}\\
\F_{(5)} & \text{ if } \{m,n\} = \{3,3\}\\
\F_{(7)} & \text{ if } \{m,n\} = \{3,4\}\\
\F_{(9)} & \text{ if } \{m,n\} = \{3,5\}\\
\F_{(9)} & \text{ if } \{m,n\} = \{4,4\}
\end{cases}
\end{equation*}

Aided by our program, we verified the following conjecture for graphs with up to $10$ vertices:
\begin{conj}\label{conj:0uberforgraphs}
These isomorphisms hold in general
\begin{itemize}
\item $\mathbb{H}^0(G) = 0$ if $G$ is a tree.
\item $\mathbb{H}^0(K_{m,n}) \cong \F_{(2)}$ for all $m,n >1$.
\item $\mathbb{H}^0(L_m) \cong \F_{(m-2)}$, where the generator is the sum of the two connected components in the colouring $\varepsilon = (0, 1, \ldots, 1, 0, 1)$
\end{itemize}
\end{conj}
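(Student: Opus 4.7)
For (i), I would pick a leaf $\ell$ of $T$ with unique neighbour $u$ and define an acyclic matching on $\mathcal{C}^*(T)$ by pairing each generator $(\varepsilon, C)$ with its $\ell$-toggled counterpart: $(\varepsilon, C)$ with $\ell \notin B_\varepsilon$ and $u \notin V(C)$ pairs with $(\varepsilon + e_\ell, C)$, and $(\varepsilon, C)$ with $\ell \notin B_\varepsilon$ and $u \in V(C)$ pairs with $(\varepsilon + e_\ell, C \cup \{\ell\})$. Every ``add $\ell$'' Hasse arrow is then a matching arrow, so a zigzag cycle would have to reintroduce $\ell$ via a non-matching arrow, which is impossible; hence the matching is acyclic. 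The unmatched cells are the singleton-component generators $(\varepsilon, \{\ell\})$ with $\ell \in B_\varepsilon$ and $u \notin B_\varepsilon$, which I would re-index by subsets of $V(T) \setminus \{\ell, u\}$ to identify the Morse complex with a degree-shifted full subset complex on $m-2$ letters, hence acyclic whenever $|V(T)| \ge 3$.

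For (ii), I would partition $2^V$ into $X = 2^A \cup 2^B$ and $Y$ (subsets meeting both parts, for which $K_{m,n}[S]$ is connected) and form the SES $0 \to Y_* \to \mathcal{C}^*(K_{m,n}) \to X_* \to 0$. The quotient $X_* = \bigoplus_{v \in V} X_*^{(v)}$, with each $X_*^{(v)}$ a degree-shifted full subset complex on (the part of $v$)$\setminus \{v\}$, is acyclic when $m, n \ge 2$, so $\mathbb{H}^0(K_{m,n}) \cong H^*(Y_*)$. A further SES $0 \to Y_* \to K^*(V) \to K^*(V)/Y_* \to 0$ against the acyclic $K^*(V)$, together with the identification of $K^*(V)/Y_*$ as the wedge $K^*(A) \vee_\emptyset K^*(B)$, reduces the computation to $H^*(\tilde K^*(A)) = \F$ in degree $1$. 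Two connecting homomorphisms then shift this to $H^2(Y_*) = \F$, and tracing $\sum_{a \in A}\{a\}$ through them identifies the generator as $\sum_{a \in A,\, b \in B}(\{a,b\}, \{a,b\})$.

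For (iii), I would compare $L_m$ with the path $P_{m-1}$ obtained by deleting the edge $\{v_1, v_m\}$. The surjective chain map $\phi \colon \mathcal{C}^*(P_{m-1}) \to \mathcal{C}^*(L_m)$ sending $(\varepsilon, C)$ to the component of $L_m[B_\varepsilon]$ containing $V(C)$ has kernel $K$ generated, for each $\varepsilon$ with $v_1, v_m \in B_\varepsilon$ and $B_\varepsilon \ne V$, by $(\varepsilon, C_1) + (\varepsilon, C_2)$ where $C_1, C_2$ are the two components of $P_{m-1}[B_\varepsilon]$ containing $v_1$ and $v_m$. When $m \ge 4$, part (i) makes $\mathcal{C}^*(P_{m-1})$ acyclic, so the LES collapses to $\mathbb{H}^0(L_m) \cong H^{*+1}(K)$. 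Reparametrising $K$ by the middle coordinates $\varepsilon' \in \{0,1\}^{m-2} \setminus \{(1,\ldots,1)\}$ identifies it (up to a degree shift of $2$) with the subset complex on $m-2$ letters modulo its top cell, and one more SES gives $H^*(K) = \F$ in degree $m-1$. To match the claimed generator, I would take $b = (\varepsilon_0, \{v_2, \ldots, v_{m-2}\}) + (\varepsilon_0, \{v_m\}) \in \mathcal{C}^{m-2}(P_{m-1})$ at $\varepsilon_0 = (0,1,\ldots,1,0,1)$ and verify by direct computation that $d_P(b)$ represents a generator of $H^{m-1}(K)$; since $v_1$ is white in $\varepsilon_0$ the extra edge of $L_m$ plays no role, so $\phi(b) = b$ represents the generator of $\mathbb{H}^0(L_m)$.

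The main obstacle I anticipate is the bookkeeping for the Morse matching in (i), particularly verifying that no zigzag escapes the chosen flow, and --- in (ii) and (iii) --- matching abstract LES classes to the explicit representatives claimed in the conjecture, which requires a chain-level lift rather than a purely formal diagram chase.
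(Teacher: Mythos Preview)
This statement is a \emph{conjecture} in the paper, not a theorem: the paper gives no proof, only the computational evidence mentioned just before the statement. The Remark immediately following it records that items (i) and (iii) were subsequently proved in \cite{ubergrafi}, while item (ii) remains open as far as the present paper is concerned. So there is no ``paper's own proof'' to compare against.

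That said, your proposal is sound. For (i), the leaf-toggle matching is acyclic for exactly the reason you give (every add-$\ell$ arrow is a matching arrow, so the $\ell$-coordinate of $\varepsilon$ is non-increasing along any directed path in the modified diagram), and the critical cells are correctly identified. The one point you gloss over is why the induced Morse differential on the critical cells is \emph{precisely} the ``add one vertex'' differential on subsets of $V(T)\setminus\{\ell,u\}$: a priori there is also a zigzag contribution starting in the $u$-direction, namely
\[
(\varepsilon,\{\ell\}) \to (\varepsilon+e_u,C_u) \to (\varepsilon+e_u-e_\ell,\,C_u\setminus\{\ell\}) \to \cdots .
\]
After this single $m^{-1}$-step one lands at a cell with $\ell$ white, and every subsequent non-matching differential arrow (adding some $w\neq\ell$) keeps $\ell$ white, so the path can never reach a critical cell (all of which have $\ell$ black). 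With that said, the Morse complex really is the augmented subset complex on $m-2$ letters, acyclic for $m\ge 3$; note the statement does fail for the single edge $T=K_2$, consistently with Example~\ref{ex:1spx}.

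For (ii) and (iii) your short-exact-sequence reductions are correct. In (ii) one gets $H^*(Y_*)\cong\F$ in degree $2$, generated by $\sum_{a\in A,\,b\in B}(\{a,b\},\{a,b\})$ as you claim. In (iii) the kernel $K$ is indeed, after reparametrisation, the subset complex on $m-2$ letters with its top cell deleted, shifted by $2$, so $H^*(K)\cong\F$ in degree $m-1$ and hence $\mathbb{H}^0(L_m)\cong\F$ in degree $m-2$. Your explicit cycle $b$ also checks out: $d_L(b)=0$ because adding either of $v_1,v_{m-1}$ merges the two components, while $d_P(b)$ is a single size-$(m-3)$ generator of $K$, which is not a boundary since boundaries in that top degree of $K$ all have even support over $\F$.
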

\begin{rmk}
The first and last item of the previous conjecture have since been proved in~\cite{ubergrafi}.
\end{rmk}

We can also consider the \"uberhomology of $G$ in other (bi-)degrees; if we define $\mathbb{H}^2_*(G)$ as $\uH^*_1(G,2)$, we see that $\mathbb{H}^2(G)$ can be defined as the homology whose generators are edges with both endpoints coloured in white, placed along the poset $\Z_2^{|V(G)|}$, with differentials induced by inclusion. 
Computations with our program seem to imply that $\mathbb{H}^2(G)$ is trivial:
\begin{conj}
If $G$ is a simple and connected graph, then $\mathbb{H}^2(G) = 0$.
\end{conj}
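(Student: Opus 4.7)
The plan is to show that the chain complex computing $\mathbb{H}^2(G)$ splits as a direct sum indexed by the edges of $G$, where each summand is a tensor product of acyclic two-term complexes, hence itself acyclic.

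First, since $G$ is a $1$-dimensional simplicial complex, no $2$-simplices exist, so $\partial_h$ vanishes on $C_1(G,\varepsilon,2)$. Consequently $\HH^h_1(G,\varepsilon,2)$ is simply the free $\F$-module on the edges $e\in E(G)$ both of whose endpoints are white in $\varepsilon$.

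Next, I would describe the \"uberdifferential in this bi-degree. For a cube edge $\eta:\varepsilon\to\varepsilon'$ that flips some vertex $v$ from white to black, the map $d_\eta$ acts on an edge-generator $e$ (both endpoints white in $\varepsilon$) as the identity if $v\notin V(e)$, and annihilates $e$ otherwise, since in the latter case $w_{\varepsilon'}(e)=1<2$. In particular the edge $e$ itself is preserved throughout, so the chain complex $(\ddot{C}^{\bullet}_1(G,2),d)$ decomposes as
$$\ddot{C}^{\bullet}_1(G,2)\;\cong\;\bigoplus_{e\in E(G)}\mathcal{C}_e^{\bullet},$$
where $\mathcal{C}_e^{j}$ is spanned by pairs $(\varepsilon,e)$ with $|\varepsilon|=j$ and both endpoints of $e$ white.

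Third, I would identify each $\mathcal{C}_e^{\bullet}$ as a Koszul-type complex. Setting $W_e=V(G)\setminus V(e)$, the generators of $\mathcal{C}_e^{j}$ correspond bijectively to subsets $B\subseteq W_e$ of size $j$ (encoding the black vertices of $\varepsilon$, which must avoid $V(e)$), and the differential is $B\mapsto\sum_{w\in W_e\setminus B}(B\cup\{w\})$. This matches the tensor product
$$\mathcal{C}_e^{\bullet}\;\cong\;\bigotimes_{w\in W_e}A_w,\qquad A_w=\bigl(\F\xrightarrow{\mathrm{id}}\F\bigr),$$
with each $A_w$ the acyclic two-term complex recording whether $w$ is white (degree $0$) or black (degree $1$).

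Since each $A_w$ is acyclic and $|W_e|=|V(G)|-2\geq 1$ whenever $|V(G)|\geq 3$, the K\"unneth formula over the field $\F$ gives $H^{*}(\mathcal{C}_e^{\bullet})=0$ for every edge $e$, and the conjecture follows in this range. I do not anticipate a substantial obstacle: the only real subtlety is the degenerate case $G=K_2$, where $W_e=\emptyset$ and the empty tensor product contributes $\F$ in homological degree $0$, so $\mathbb{H}^2(K_2)\cong\F$; presumably the conjecture is implicitly meant for graphs on at least three vertices.
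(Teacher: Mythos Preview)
The paper does not prove this statement; it is stated as a conjecture supported only by computer calculations. Your argument actually resolves it, so there is no paper proof to compare against.

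Your proof is correct. The key observations---that $\HH^h_1(G,\varepsilon,2)$ is freely generated by the edges whose endpoints are both white, that the \"uberdifferential preserves the underlying edge, and that the resulting per-edge summand is the Koszul/cube complex on $V(G)\setminus V(e)$---are all valid and combine exactly as you describe. The acyclicity of $\bigotimes_{w\in W_e}(\F\xrightarrow{\mathrm{id}}\F)$ when $|W_e|\geq 1$ is standard.

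Your caveat about $K_2$ is also on point, and in fact the paper's own Example~\ref{ex:1spx} confirms it: the computation there gives $\uH^0(\Delta^1)\supseteq\F\langle[e]_{(0,0)}\rangle_{(1,2)}$, so $\mathbb{H}^2(K_2)\cong\F_{(0)}\neq 0$. Thus the conjecture, read literally, fails for this one degenerate graph and holds for all simple connected graphs on at least three vertices; your proof establishes precisely this.
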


Finally, in analogy with the previous ones, we can define the homologies $\mathbb{H}^1_0(G)$ and $\mathbb{H}^1_1(G)$ as the \"uberhomologies of $G$ in filtration degree $1$, and in homological degree $0$ and $1$ respectively. 
It is then easy to see that in degree $k$ the group $\mathbb{H}^1_0(G)$  is generated by those white vertices in $(G,\varepsilon)$ such that all of their neighbours are white as well, and $|\varepsilon| = k$. 
We can use this description to compute $\mathbb{H}^1_0$ for the complete graphs: 
\begin{equation}
\mathbb{H}^1_0 (K_m) \cong \F^m_{(0)}.
\end{equation}
On the other hand $\mathbb{H}^1_0$ appears to be trivial for many classes of graphs, such as loops and trees with more than $4$ vertices (but is highly non-trivial for graphs obtained by removing a few edges from $K_m$). In particular there might be some relation of sorts between the rank of $\mathbb{H}^1_0 (G)$ and the connectivity of $G$.\\

The description of $\mathbb{H}^1_1 (G)$ is slightly more complex, due to the fact that, unlike with the other homologies we just defined, here we must keep track of the horizontal differential.
Each horizontal homology that belongs to the complex with homology $\mathbb{H}^1_1 (G)$ is as in the rightmost summand in Equation~\eqref{eqn:horgraph}. More in detail, as shown in Figure~\ref{fig:varieubergrafi}, we get $p-1$ generators in $\HH^h_1(G, \varepsilon,1)$ for each white vertex in $G$ whose link contains $p$ black vertices, if $p>0$, and $0$ otherwise. 
\begin{figure}[ht]
\centering
\includegraphics[width = 6cm]{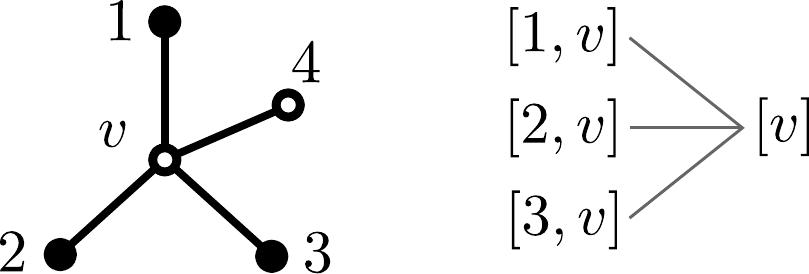}
\caption{The neighbourhood of the vertex $v$ containing $3$ black vertices; this produces the complex on the right, whose homology has rank $2$.}
\label{fig:varieubergrafi}
\end{figure}

It is possible to show that $\mathbb{H}^1_1$ is not a trivial homology; as a small example, one can compute $\mathbb{H}^1_1 (L_4) = \F^4_{(2)}$.

~\\
Here, we use results from Sections~\ref{sec:uberhomology} and~\ref{sec:computations} to compute the \"uberhomology in its extremal homological degrees for loop graphs.

\begin{rmk}\label{rmk:uberloops}
Consider the $1$-dimensional simplicial complex $L_m$, the loop of length $m \ge 3$. Then by using Theorem~\ref{thm:0dim} and Theorem~\ref{thm:topdim} we can easily deduce that (see also Figure~\ref{fig:uberofloop})
$$\uH^0(L_m) = 0 \Longleftrightarrow m > 3,$$
and
$$\uH^m(L_m) \cong \F_{(1,0)}.$$
\begin{figure}[ht]
\centering
\includegraphics[width=10cm]{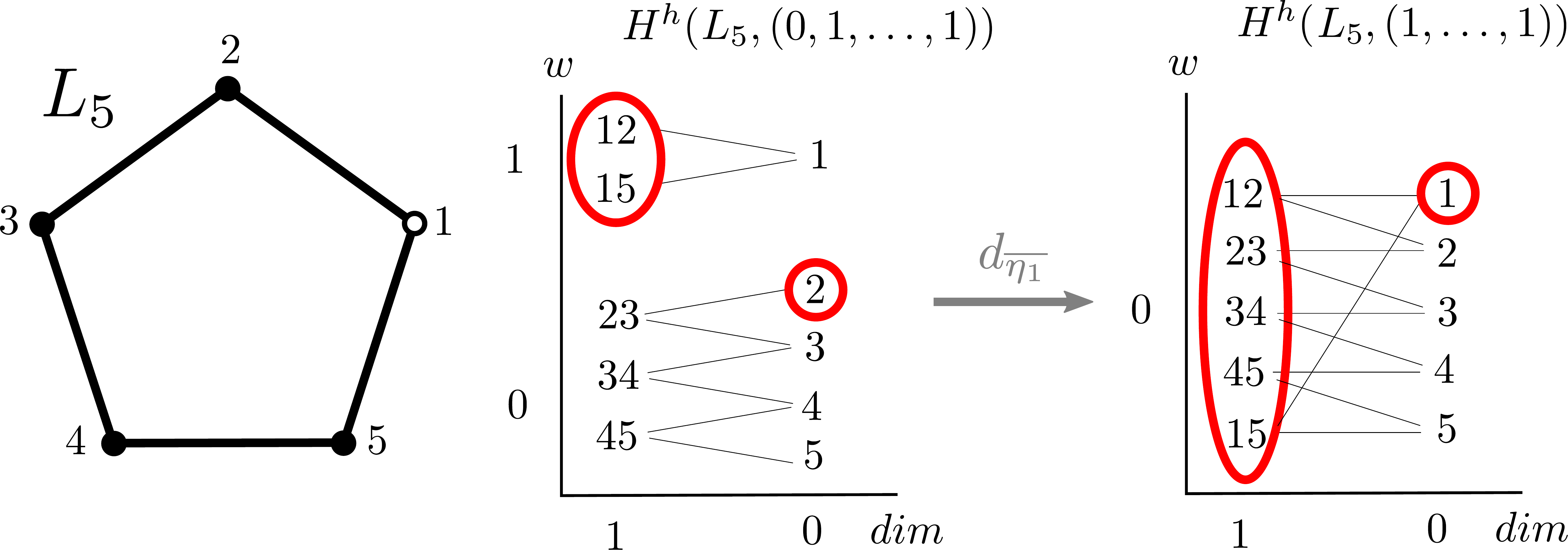}
\caption{On the left, the coloured complex $(L_5, (0,1,1,1,1))$; on the right, the two horizontal complexes related by the map $d_{\overline{\eta_i}}$. Generators of the horizontal homology are circled in red.}
\label{fig:uberofloop}
\end{figure}
\end{rmk}


In a somewhat different direction, there are several ways of extracting properties of a graph $G$ by associating a simplicial complex to it (see \emph{e.g.}~\cite{jonsson2007simplicial} for a survey); one example is the matching complex defined in Section~\ref{sec:graphs}. We can conclude with a final construction, loosely related to the content of Section~\ref{sec:graphs}, where the  \"uberhomology appears ``naturally''.

\begin{exa}
Let $G$ be a connected graph with edges $\{e_1, \ldots, e_m\}$. Consider the $1$-skeleton of the $m$-dimensional cube with vertices in $\Z_2^m$; for each such vertex $\varepsilon$ of this cube we can colour the $i$-\emph{th} edge of the graph $G$ black or white according to whether  $\varepsilon(i)$ is $1$ or $0$ (see Figure~\ref{fig:subgraphs}). 
\begin{figure}[ht]
\centering
\includegraphics[width = 9cm]{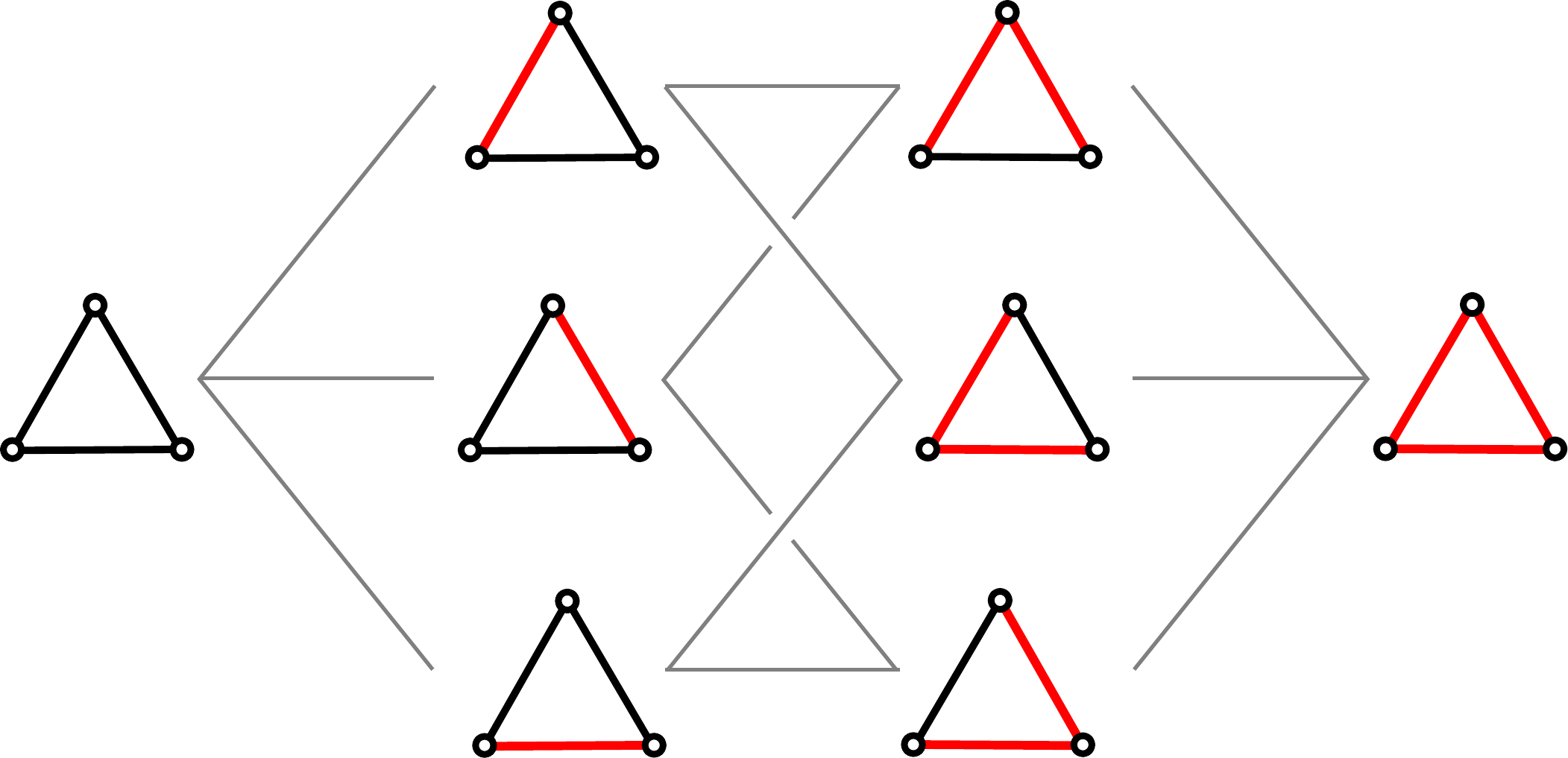}
\caption{The poset structure of the edges in the cycle graph $C_3$. }
\label{fig:subgraphs}
\end{figure}

Thus for each $\varepsilon$ we can bi-colour the matching complex $\Mat (G)$ accordingly. 
Now we can decorate the $m$-cube with $\HH^h(\Mat (G), \varepsilon)$, and define the \"uberhomology $\uH(\Mat (G))$. 
\end{exa}

It would be of course interesting to see which properties of $G$ (and $\Mat (G)$) are reflected in the structure of $\uH(G)$ and  $\uH(\Mat (G))$. The following result provides some information in the $0$-th degree of the \"uberhomology.
\begin{prop}\label{prop:uberofgraphs}
Consider a simple graph $G$ with edges $E(G) = \{e_1, \ldots, e_m\}$. Then the generators of  $\displaystyle \bigoplus_{k >0} \HH^h_*(\Mat (G),\varepsilon_i, k)$  are in bijection with matchings that do not contain $e_i$, and thus $\uH^0(\Mat (G)) \cong 0$. 

Furthermore, $$\HH^h_*(\Mat (G), \overline{\varepsilon_i},0) \cong \HH_*(\Mat (G\setminus e_i)),$$ that is, we obtain the homology of the matching complex of the graph $G$ with the $i$-th edge removed. 
\end{prop}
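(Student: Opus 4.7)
All three parts follow from structural results established earlier in the paper. The colouring $\varepsilon_i$ is elementary and therefore dalmatian, so Proposition~\ref{prop:dalmatianhor} applies directly to $(\Mat(G),\varepsilon_i)$ and identifies the generators of $\HH^h(\Mat(G),\varepsilon_i)$ as the single black vertex $v_{e_i}$ (contributing in bi-degree $(0,0)$) together with one generator per simplex of $\Mat(G)$ not contained in $\overline{St(v_{e_i})}$. Translating back to graph theory, the simplices of $\Mat(G)$ are matchings $m$ of $G$, and the condition $m \nsubseteq \overline{St(v_{e_i})}$ says precisely that $e_i \notin m$ and $m \cup \{e_i\}$ is not a matching of $G$. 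Since such simplices have all vertices white and so live in filtration degree $k=|m|>0$, this yields the first claim.

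For the vanishing of $\uH^0(\Mat(G))$, I would invoke Theorem~\ref{thm:0dim}, according to which $\uH^0(\Mat(G))$ is supported on the simplices of $\bigcap_{i=1}^m \overline{St(v_{e_i})}$. A matching $m$ lies in this intersection exactly when, for every edge $e_i$ of $G$, either $e_i \in m$ or $m \cup \{e_i\}$ is still a matching. Assuming $G$ is connected with at least two edges, pick any $e_j \in m$; by connectedness there is some edge $e_k \neq e_j$ sharing a vertex with $e_j$. Then $e_k \notin m$ (otherwise $m$ would not be a matching) and $m \cup \{e_k\}$ is not a matching, contradicting the defining condition at $e_k$. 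Hence $\bigcap_i \overline{St(v_{e_i})} = \emptyset$ and $\uH^0(\Mat(G)) = 0$.

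The third assertion will follow from a direct inspection of the chain complex. With the colouring $\overline{\varepsilon_i}$ the unique white vertex of $\Mat(G)$ is $v_{e_i}$, so a matching $m$ has weight $|m \cap \{e_i\}|$, which vanishes iff $e_i \notin m$. Consequently the weight-zero subcomplex is spanned by those matchings of $G$ avoiding $e_i$, and these are precisely the simplices of $\Mat(G \setminus e_i)$; any codimension-one face of such a matching again avoids $e_i$, hence still has weight zero. The restriction of $\partial_h$ to weight zero therefore agrees with the full simplicial differential of $\Mat(G \setminus e_i)$, and taking homology produces $\HH^h_*(\Mat(G),\overline{\varepsilon_i},0) \cong \HH_*(\Mat(G \setminus e_i))$.

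The only mildly delicate step is establishing the emptiness of $\bigcap_i \overline{St(v_{e_i})}$; the combinatorial argument above is short, but implicitly uses the standing hypothesis that $G$ is connected with at least two edges, so degenerate cases (a single edge, or a single isolated vertex) should be acknowledged separately.
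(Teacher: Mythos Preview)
Your argument follows the same route as the paper's proof: the paper appeals to Theorem~\ref{thm:dmmandsubgraphs} (you use its corollary, Proposition~\ref{prop:dalmatianhor}) for the first part, to Theorem~\ref{thm:0dim} for the vanishing of $\uH^0$, and to Proposition~\ref{prop:almosttop} for the last part (which you instead verify directly). The approaches are essentially identical, with yours filling in details the paper leaves implicit---in particular, the paper simply asserts that the vanishing of $\uH^0$ ``follows by Theorem~\ref{thm:0dim}'' without spelling out the combinatorial argument that $\bigcap_i \overline{St(v_{e_i})} = \emptyset$, which you supply.

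One point worth flagging: your characterisation of the generators in positive filtration degree as the matchings $m$ with $e_i \notin m$ \emph{and} $m \cup \{e_i\}$ not a matching is correct, and in fact sharper than the proposition's wording (``matchings that do not contain $e_i$''). A matching $m$ with $e_i \notin m$ but $m \cup \{e_i\} \in \Mat(G)$ is paired by the Morse matching and does not survive as a generator; a small example ($G = P_3$, where $\{e_3\}$ avoids $e_1$ but is killed by $\{e_1,e_3\}$) confirms this. The paper's own proof, which describes the Morse matching as pairing each simplex containing $e_i$ with its face obtained by removing $e_i$, is consistent with your sharper description. So you have not quite proved the statement as literally written, but what you have proved is the correct refinement; you might add a sentence noting that the statement should be read with this extra clause.
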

\begin{proof}
The first part of the statement follows from the proof of Theorem~\ref{thm:dmmandsubgraphs}: for an elementary colouring we get a Morse matching $I(\Mat (G), \varepsilon_i)$ that pairs all the simplices containing $e_i$ with their unique maximal face that does not contain $e_i$. The only exception occurs for the vertex determined by $e_i$, which is the only simplex in filtration degree $0$. The triviality of the \"uberhomology in degree $0$, then follows by Theorem~\ref{thm:0dim}, while the last statement is an immediate consequence of Proposition~\ref{prop:almosttop}.
\end{proof}

\begin{rmk}
In the opposite direction, it is generally hard to compute $\uH^{|E(G)|}(\Mat (G))$, except when $\Mat (G)$ is a homology manifold (using Theorem~\ref{thm:topdim}). The graphs for which this occurs were completely classified in \cite{bayer2020manifold}.
\end{rmk}

~\\
There are many questions regarding the type and amount of information on $X$ one can hope to extract from $\uH(X)$; we collect here some reasonable questions on the matter.
\begin{itemize}
\item We have seen in Theorems~\ref{thm:0dim},~\ref{thm:topdim} that the uberhomology in its extremal homological degrees admits alternative interpretations.  What kind of information is encoded in $\uH(X)$ in the intermediate degrees? 
\item Is it possible to construct a ``spacification'' of $\uH(X)$, \emph{i.e.}~a topological space $Y$ (or more likely a collection of such spaces) such that $\mathrm{H}_*(Y) = \ddot{\mathrm{H}}(X)$? 
\item Is it possible to define the usual algebraic topology toolbox available for simplicial homology in the case of the \"uberhomology (functoriality, exact sequences, K\"unneth formula, etc.)?
\item  What is the decategorification of $\uH(X)$? Is it related --or does it specialise to-- any known polynomial invariants of simplicial complexes? 
\end{itemize}

\end{document}